  \crefname{theorem}{Theorem}{Theorems}
  \crefname{thm}{Theorem}{Theorems}
  \crefname{lemma}{Lemma}{Lemmas}
  \crefname{lem}{Lemma}{Lemmas}
  \crefname{remark}{Remark}{Remarks}
  \crefname{prop}{Proposition}{Propositions}
  \crefname{proposition}{Proposition}{Propositions}
  \crefname{problem}{Problem}{Problems}
\crefname{notation}{Notation}{Notations}
\crefname{claim}{Claim}{Claims}
  \crefname{defn}{Definition}{Definitions}
  \crefname{corollary}{Corollary}{Corollaries}
  \crefname{section}{Section}{Sections}
  \crefname{figure}{Figure}{Figures}
  \crefname{exercise}{Exercise}{Exercises}
    \crefname{assumption}{Assumption}{Assumptions}
\newtheorem{thm}{Theorem}[section]
\newtheorem{conjecture}[thm]{Conjecture}
\newtheorem{lemma}[thm]{Lemma}
\newtheorem{corollary}[thm]{Corollary}
\newtheorem{prop}[thm]{Proposition}
\newtheorem{proposition}[thm]{Proposition}
\numberwithin{equation}{section}
\theoremstyle{definition}
\newtheorem{remark}[thm]{Remark}
\def\cP{\mathcal{P}}
\def\cM{\mathcal{M}}
\def\cL{\mathcal{L}}
\def\cH{\mathcal{H}}
\def\cG{\mathcal{G}}
\def\cF{\mathcal{F}}
\def\cE{\mathcal{E}}
\def\cD{\mathcal{D}}
\def\cC{\mathcal{C}}
\def\cB{\mathcal{B}}
\def \bP {\mathbf P}
\def \ve {\varepsilon}
\def\P{\mathbb{P}}
\def\E{\mathbb{E}}
\def\C{\mathbb{C}}
\def\R{\mathbb{R}}
\def\Z{\mathbb{Z}}
\def\N{\mathbb{N}}
\def\D{\mathbb{D}}
\def\R{\mathbb{R}}
\def  \p- {p\textunderscore}
\def\eps{\varepsilon}
\renewcommand{\i}{{\bf i}}
\title{Double dimers on planar hyperbolic graphs via circle packings}
\author{Gourab Ray \thanks{University of Victoria. Research partially supported by NSERC 50311-57400. Address: DTB A440, University of Victoria, Victoria, Canada, V9B 0Z2. Email: gourabray@uvic.ca}}
\begin{document}
\maketitle
\abstract{In this article we study the double dimer model on hyperbolic Temperleyan graphs via circle packings. We prove that on such graphs, the weak limit of the dimer model exists if and only if the removed black vertex from the boundary of an exhaustion converges to a point on the unit circle in the circle packing representation of the graph. One of our main results is that for such measures,  the double dimer model has no bi-infinite path almost surely.

Along the way we prove that in the nonamenable setup, the height function of the dimer model has double exponential tail and faces of height larger than $k$ do not percolate for large enough $k$. The proof uses the connection between winding of uniform spanning trees and dimer heights, the notion of stationary random graphs, and the boundary theory of random walk on circle packings.}
\section{Introduction and main results}
The double dimer model, which is simply a superimposition of two independent samples from a measure supported on perfect matchings, has a long history \cite{kenyon2014conformal,DDD19,BD19,L17} and is considered an important problem in the field. Despite great progress in the planar setting of lattices and other flat geometries, dimer model on graphs of other general geometries have not seen much progress. On the contrary, for Bernoulli percolation, fascinating progress has been made on percolation in hyperbolic geometry, creating some of the most beautiful and elegant mathematics in the past two decades, see \cite{LyonsPeres,bperc96} for an overview of  this topic.
In this article, we study the double dimer model on planar hyperbolic graphs, with the main goal to prove that bi-infinite paths do not exist in them.

Let $\Gamma$ be a planar, one ended, transient, random rooted triangulation with degree uniformly bounded by $\Delta$.   For precise definitions, we refer to \Cref{sec:stationary}. This includes regular triangulations with degree at least 7 as special cases. Unlike percolation, we need to set things up more precisely to study the dimer model on such graphs. To that end, we now briefly describe how to obtain the Temperleyan version of $\Gamma$ (a more detailed exposition is in \Cref{sec:height}).
Let $\Gamma^\dagger$ denote the dual graph of $\Gamma$. Let $G$ denote the \textbf{Temperleyan graph} formed by superimposing $\Gamma$ and $\Gamma^\dagger$ and introducing a new vertex at the intersection points of each edge and their dual. Color the vertices of $\Gamma, \Gamma^\dagger$ black and the new vertices introduced white. Note that $G$ is in fact a bipartite graph, which is in fact a quadrangulation, and the colorings of its vertices thus introduced partitions the vertex set into its partite classes. (see \Cref{fig:temp}). Recall that a
\textbf{dimer covering} or a \textbf{perfect matching} is a collection of edges in $G$ such that every black vertex is incident to exactly one white vertex.

 We now define the `uniform measure' of dimer coverings on $G$. 
 Since $G$ is infinite, we define it through the standard technique of creating an exhaustion of finite approximations of $G$ and then taking limit. Take a planar, proper embedding of $(\Gamma, \rho)$. Let $\Gamma^\dagger_1 \subseteq \Gamma^\dagger_2 \subseteq \dots$  denote an exhaustion of $\Gamma^\dagger$  such that the union $\Gamma^\dagger_i$ and the faces of $\Gamma^\dagger_i$ form a simply connected subset of the plane. Let $\Gamma_i$ denote the planar dual of $\Gamma^\dagger_i$. Let $\hat G_i$ denote the graph obtained by superimposing $\Gamma_i, \Gamma_i^\dagger$, introducing a new white vertex  as above.   \footnote{It is somewhat of a convention to use the dual graph for the wired boundary graph and the primal for the free boundary graph. However since all the random walk estimates are in the wired graph, we make that the primal graph here, which hopefully will not be a cause for confusion.}
 Now remove the vertex in $\Gamma_i$ corresponding to the outer face, and remove remove a black vertex from the boundary of $\hat G_i$ and call the resulting graph $G_i$, see \Cref{fig:temp} (this last step is necessary to ensure there is a dimer cover). Call the black vertex removed $\mathfrak b_i$. Let $\mu_i$ denote the uniform measure on the set of dimer covers of $G_i$. We note that if we superimpose two dimer covers $m,m'$ then $m\Delta m'$ consists of finite cycles with alternating edges from $m $ and $m'$.

 It turns out that in this setup, the weak limit of the dimer covers may not exist, and is dependant heavily on the choice of the sequence of removed vertices $\mathfrak b_i$. However, we shall prove that there are  judicious choice of the sequences $\mathfrak b_i$ for which the limit does exist. The details of such a choice can be found in \Cref{sec:infinite_limit}, however for now we quickly mention that the way to obtain this sequence is to circle pack $\Gamma$ in a unit disc $\D$ and then ensure that $\mathfrak b_i$ converges to a point in $\partial \D$. We prove the following theorem about weak limits.
 
 \begin{thm}\label{thm:main0}
 Suppose $(\Gamma, \rho)$ is one ended, bounded degree, transient triangulation, and let $G$ be the Temperleyan graph obtained as above.
 Suppose $\mathfrak b_i$ is chosen as above so that $\mathfrak b_i$ converges to a point $x$ in $\partial \D$ for a circle packing embedding of $\Gamma$ and let $\mu_i^x$ be the corresponding uniform dimer measure on $G_i$. Then  the sequence $(\mu^{x}_i)_{i \ge 1}$ converges weakly to $\mu^x$, a probability measure on dimer covers of $G$. 
 \end{thm}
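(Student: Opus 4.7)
The plan is to use the Temperley bijection to reduce convergence of the dimer measures $\mu_i^x$ to convergence of an associated random spanning tree together with a distinguished root at $\mathfrak{b}_i$, and then to control the root-dependence using the boundary theory of random walks on circle-packed triangulations. Concretely, the first step is the Temperley bijection: a dimer cover of $G_i$ corresponds to a pair of dual spanning trees of $\Gamma_i$ and $\Gamma_i^\dagger$, equivalently to a single spanning tree $T_i$ of $\Gamma_i$ wired at the outer face, together with the orientation of $T_i$ toward $\mathfrak{b}_i$. To show $\mu_i^x \to \mu^x$ weakly it therefore suffices to show joint convergence of $T_i$ together with, for each fixed vertex $v$, the unique path $P_i(v)$ from $v$ to $\mathfrak{b}_i$ in $T_i$, since the dimer configuration in any fixed finite region of $G$ is determined by finite initial segments of such paths and the corresponding dual structure.

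The unrooted tree $T_i$ converges by the classical Benjamini--Lyons--Peres--Schramm theorem: since $\Gamma$ is one-ended, transient, and bounded degree, the wired uniform spanning forest is a single tree and is the weak limit of $T_i$ irrespective of which boundary black vertex is removed. The choice of $\mathfrak{b}_i$ enters only through the paths $P_i(v)$. By Wilson's algorithm rooted at $\mathfrak{b}_i$, $P_i(v)$ has the law of the loop-erasure of a simple random walk on $\Gamma_i$ started at $v$ and conditioned to hit $\mathfrak{b}_i$ before the wired boundary. I would then combine this with the circle packing boundary identification of Benjamini--Schramm and Angel--Barlow--Gurel-Gurevich--Nachmias: simple random walk on $\Gamma$ converges almost surely to a point of $\partial \mathbb{D}$, and the exit measure is nonatomic with density (Poisson kernel) varying continuously in the starting vertex. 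Combined with a suitable comparison of hitting probabilities of $\mathfrak{b}_i$ from different starting vertices, the walk conditioned to end at $\mathfrak{b}_i$ should converge as $i\to\infty$ and $\mathfrak{b}_i\to x$ to the Doob $h$-transform of simple random walk on $\Gamma$ by the Poisson kernel at $x$; its loop-erasure therefore also converges, and running Wilson's algorithm on finitely many starting points yields finite-dimensional convergence of $(T_i,(P_i(v))_v)$, hence weak convergence of $\mu_i^x$.

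The main obstacle will be precisely this comparison of hitting probabilities. Since $\mathfrak{b}_i$ is a single vertex escaping to $\partial \mathbb{D}$, the hitting probabilities $p_i(v) := \bP_v(\mathrm{hit\ }\mathfrak{b}_i\mathrm{\ before\ the\ wired\ boundary})$ tend to zero, so the Doob transform has degenerating normalisation. What is actually required is that the ratios $p_i(u)/p_i(v)$ converge to the corresponding ratios of Poisson kernel values at $x$, uniformly for $u,v$ in any fixed finite subset of $\Gamma$; this is a boundary-Harnack type statement for the circle-packed random walk, whose proof genuinely uses the geometry of the packing (local finiteness of tangencies, control of harmonic measure near $\partial \mathbb{D}$). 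Once this analytic input is in place, the bijective and Wilson-based arguments pass through essentially formally, and one can also verify consistency of the limit with the dual tree, giving a well-defined limiting measure $\mu^x$.
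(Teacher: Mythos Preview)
Your reduction to the Temperley bijection is correct, but there is a confusion in the setup that creates a real gap. The vertex $\mathfrak b_i$ is a boundary vertex of $\Gamma_i^\dagger$, and it is the \emph{free} tree $T_i^\dagger$ on $\Gamma_i^\dagger$ that is oriented toward $\mathfrak b_i$; the wired tree on $\Gamma_i$ is oriented toward the outer-face vertex $\partial$, independently of $\mathfrak b_i$. Thus the $\mathfrak b_i$-dependent object you must control is the path in $T_i^\dagger$ from a fixed $v\in\Gamma^\dagger$ to $\mathfrak b_i$, i.e.\ (via Wilson) the loop-erasure of simple random walk on $\Gamma_i^\dagger$ stopped at $\mathfrak b_i$, with \emph{no} conditioning against a wired boundary. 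Your proposed analytic input---Poisson kernel, exit measures, boundary Harnack---is available for random walk on the circle-packed triangulation $\Gamma$, not on its $3$-regular dual $\Gamma^\dagger$; and in the limit the free forest on $\Gamma^\dagger$ is not sampled by Wilson's algorithm rooted at infinity. So the Doob-transform argument as written is aimed at the wrong graph, and transferring it to $\Gamma^\dagger$ would require a separate boundary theory that the paper never develops.

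The paper sidesteps this entirely. It first invokes the classical fact that the unoriented pair $(T_i,T_i^\dagger)$ converges to the WUSF/FUSF pair $(T,T^\dagger)$, so any subsequential limit of $\mu_i^x$ corresponds to $(T,T^\dagger)$ with $T^\dagger$ oriented toward \emph{some} end accumulating at $x$. The only new input is then purely topological: for every $x\in\partial\D$ there is a \emph{unique} end of $T^\dagger$ converging to $x$. This is proved by working on $\Gamma$, where circle-packing random-walk estimates do apply: a Beurling-type lemma shows WUSF branches started near $\partial\D$ stay in small Euclidean balls, and planar duality then forces a single FUSF ray to thread between consecutive WUSF components toward $x$, while a second Wilson/Beurling argument rules out two such rays. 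Uniqueness of the end makes all subsequential limits coincide, with no boundary Harnack or conditioned-walk analysis needed.
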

The following theorem about double dimers can be seen as one of the main contribution of this article. For this theorem we need the additional assumption that $(\Gamma, \rho)$ random rooted graph which is stationary with respect to simple random walk (see \Cref{sec:stationary} for details). Furthermore, we need $\Gamma$ to be nonamenable almost surely (see \eqref{eq:nonamenable}).
 \begin{thm}\label{thm:main}
 Suppose $(\Gamma, \rho)$ is a reversible random rooted graph which is a one ended, bounded degree, nonamenable  triangulation almost surely, and let $G$ be the Temperleyan graph obtained as above. Condition on $(\Gamma, \rho)$ and circle pack it arbitrarily in the unit disc. There exists a dense set of points $\chi \subset \partial \D$ such that the following is true. Let $M,M'$ denote two independent dimer covers distributed as $\mu^x$ where $x \in \chi$. Then $M \Delta M'$ has no bi-infinite path almost surely.
 \end{thm}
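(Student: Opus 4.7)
The plan is a proof by contradiction. Suppose that for some $x \in \chi$ there is positive probability that $M \Delta M'$ contains a bi-infinite path $\gamma$. By the reversibility of $(\Gamma, \rho)$ combined with the Mass Transport Principle, we may reduce to the event that an edge incident to $\rho$ lies on such a bi-infinite path.

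The first step is to pin down the boundary behavior of $\gamma$. Since $\Gamma$ is a bounded-degree, transient, one-ended triangulation, the He--Schramm theorem yields a circle packing in $\D$ for which $\partial \D$ serves as the Poisson boundary; in particular any infinite simple path in $G$ accumulates at $\partial \D$. Hence $\gamma$ has two ends converging to points $x_-, x_+ \in \partial \D$, and $\gamma$ separates the disc into two regions $R_\pm$, each meeting infinitely many circles of the packing. The set $\chi$ is to be chosen so that, for $x \in \chi$, the endpoints $x_\pm$ of any such $\gamma$ almost surely avoid $x$ and the limiting spanning tree associated with $\mu^x$ admits a clean boundary description at $x$.

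Next, I would exploit the height function. Under the standard Temperleyan normalization, a crossing of an edge of $M \Delta M'$ shifts $h_M - h_{M'}$ by exactly $\pm 4$, whereas crossings of edges in $M \cap M'$ or in neither dimer cover produce no net jump. Hence $\gamma$ imposes a persistent asymptotic shift of $\pm 4$ in $h_M - h_{M'}$ between the two sides $R_\pm$, modulo contributions from finite cycles of $M \Delta M'$ which remain localized. By the results established earlier in the paper---namely the double exponential tail bound for $h_M$ and $h_{M'}$ together with non-percolation of the level set $\{|h| > k\}$ for $k$ large---the set $\cF_k := \{f : |h_M(f)| \le k,\ |h_{M'}(f)| \le k\}$ has a \emph{unique} infinite component $\cC_k$ by planar duality, and on $\cC_k$ the difference $|h_M - h_{M'}|$ is uniformly bounded by $2k$.

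The contradiction is extracted as follows. By stationarity and ergodicity of the rerooted configuration, positive probability of a bi-infinite path forces \emph{infinitely many} disjoint bi-infinite paths in $M \Delta M'$ almost surely. Using the boundary theory of random walks on circle packings---in particular the non-atomicity of the exit measure at $x$---one can, for any $N$, exhibit a face $f \in \cC_k$ separated from $\rho$ by at least $N$ bi-infinite paths whose $\pm 4$ jumps in $h_M - h_{M'}$ align in sign. This forces $|h_M(f) - h_{M'}(f)| \ge 4N$ at a face lying in $\cC_k$, contradicting the uniform $2k$ bound. I expect the main obstacle to be precisely this sign-alignment: generic $\pm 4$ jumps across distinct bi-infinite paths could cancel, so one must use the winding representation of the dimer height via the underlying uniform spanning tree, together with the special boundary properties of $x \in \chi$, to produce coherent signs along a sequence of nested bi-infinite paths.
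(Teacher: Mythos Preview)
Your proposal overlaps in spirit with one half of the paper's argument but contains two genuine gaps.

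\textbf{Gap 1: the reduction to infinitely many bi-infinite paths.} You assert that ``positive probability of a bi-infinite path forces infinitely many disjoint bi-infinite paths almost surely'' via stationarity and ergodicity. This is not justified, and the paper does \emph{not} argue this way. Once the free tree is oriented towards the boundary point $x$, the decorated graph $(\Gamma,\rho,\vec T,\vec T^\dagger_X)$ is only stationary, not reversible (the paper stresses this explicitly), so your earlier appeal to the Mass Transport Principle is also suspect. The paper instead splits into two cases: infinitely many bi-infinite paths, and finitely many. The finitely-many case is handled by a completely separate and rather delicate argument: one picks a bi-infinite path $P$ uniformly, colors the two sides red/blue, and runs four independent simple random walks. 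By stationarity each walk sees a positive density of red and of blue faces, so with positive probability uniformly chosen faces $F_1,\dots,F_4$ along the walks are separated by $P$ in a prescribed pattern, which forces
\[
\E\big((\Delta h(F_1)-\Delta h(F_2))(\Delta h(F_3)-\Delta h(F_4))\big)\ge c(\eta,\Delta)>0.
\]
On the other hand, the correlation lemma (random walks escape to well-separated boundary points, and Beurling-type estimates decouple the WUSF branches) forces this same expectation to be arbitrarily small. Your proposal contains no analogue of this.

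\textbf{Gap 2: sign alignment.} You correctly identify that the $\pm$ jumps across distinct bi-infinite paths could cancel, and you propose to resolve this via ``the winding representation \dots\ together with the special boundary properties of $x\in\chi$''. This is vague and is not how the paper proceeds. The paper's key device is a swap-symmetry lemma: conditionally on the collection $\cL(f,f')$ of loops and bi-infinite paths separating $f,f'$, the contributions $(\xi_L)_{L\in\cL(f,f')}$ to $\Delta h(f)-\Delta h(f')$ are i.i.d.\ symmetric $\pm1$, independent of $\cL(f,f')$. This is proved by the involution that swaps the two dimer covers along any fixed loop. With this in hand, the infinitely-many case is immediate: choose $f,f'$ both adjacent to bi-infinite paths and separated by at least $k$ loops; with positive probability $|\Delta h(f)-\Delta h(f')|\ge k/2$, whence one of $h_M,h_{M'}$ has an infinite level cluster of height at least $k/4$, contradicting the height-percolation theorem. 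No winding argument is needed for the alignment.

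In short, your outline captures the flavor of the paper's Case~1 but misses the Bernoulli-swap lemma that makes it work, and it entirely omits the Case~2 argument, which is where most of the paper's effort goes.
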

 The set $\chi$ we take is a set of exit measure $1$ of a simple random walk seen from $\rho$, it is known that any such set must be dense in $\partial \D$, see \Cref{thm:RW_circle_packing}. We believe that the theorem is true for $\chi = \partial \D$, the additional restriction is an artefact of the ergodic theoretic components in the proof.

Along the way, we prove various estimates on single dimer model as well.
We shall heavily rely on the height function representation of dimers and  its connection to winding of uniform spanning trees which have been exploited in great depth in recent years \cite{BLR16,BLR_Riemann1,BLR_Riemann2,BHS,BLiu,ray2021quantitative}. We refer to \Cref{sec:height} for details on how to construct the height function, but for now let us mention that it is a function $h:F(G) \to \R$ and is a unique representation of each dimer cover. One by product of our methods is that the height function converges to an infinite volume Gibbs measure, or in other words, it is \textbf{localized}. The following theorem tells us further that the height function is extremely flat in the sense that the tail is double exponential, uniformly over the faces of the graph. We denote by $h^x$ the height function obtained by sampling a dimer cover distributed as $\mu^x$.
\begin{thm}\label{thm:height}
Let $\Gamma$ be a bounded degree, transient, one ended triangulation.
Let $x \in \partial \D$. Then the weak limit of the height function  $(h^x_i(f))_{f \in G_i}$ corresponding to $\mu^x_i$ exists and is non-trivial in the sense that $|h^x(f)|  < \infty$ almost surely for all $f \in G$. 

Furthermore, if $\Gamma$ is assumed to be nonamenable, then there exists $C,c>0$ such that for all $k>0$ and $f \in F(G)$
$$\P(|h^x(f)| > k) \le C\exp(-e^{ck}).$$
\end{thm}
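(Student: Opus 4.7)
The plan is to reduce both assertions to the Temperleyan winding representation of the dimer height function combined with hitting estimates for simple random walk on the circle packing of $\Gamma$. By Temperley's bijection, $\mu_i^x$ corresponds to a uniform spanning tree on $\Gamma_i$ with $\mathfrak b_i$ wired to the outer face, and the height function admits the winding representation
$$h_i^x(f)-h_i^x(f_0)=\tfrac{2}{\pi}W_i(f_0\to f)+O(1),$$
where $W_i$ denotes the total embedded winding of the tree path joining $f_0$ and $f$ (as developed in \cite{BLR16,BLR_Riemann1,ray2021quantitative}). Since $\mu_i^x\Rightarrow\mu^x$ by \Cref{thm:main0}, the associated spanning tree measures converge to a limiting wired uniform spanning forest whose branches, by Wilson's algorithm, are loop erased random walks. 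Fixing a reference face $f_0$ and declaring $h^x(f_0)=0$, the tree path from $f_0$ to any fixed face $f$ is obtained by concatenating two LERWs up to their (almost surely finite) merger time, so $h^x(f)$ is an almost surely finite integer; this proves the first statement.

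For the double exponential tail, assume $\Gamma$ is nonamenable. By Wilson's algorithm, the branch of the wired USF emanating from $f$ is the loop erasure of the simple random walk started at $f$, and up to an additive $O(1)$, $|h^x(f)|$ is bounded by the winding around $f$ of this LERW. The approach is to combine two ingredients. First, a \emph{geometric lemma}: a simple path in $\Gamma$ starting at $f$ and escaping to $\partial \D$ whose winding around $f$ exceeds $k$ must make at least $\exp(ck)$ excursions from and back to a fixed combinatorial ball $B_R(f)$. The reason is that winding by $k$ forces the path to cross a sequence of $k$ nested homotopically distinct simple cycles $\gamma_1\subset\dots\subset\gamma_k$ around $f$; the circle packing (via \Cref{thm:RW_circle_packing}) places these cycles at increasing hyperbolic depth from $f$, and linear isoperimetry of nonamenable graphs combined with the exponential circumference of hyperbolic circles forces their combinatorial lengths to grow geometrically, say $|\gamma_{j+1}|\ge(1+c)|\gamma_j|$, hence $|\gamma_k|\ge e^{ck}$; consecutive windings are separated by returns to $B_R(f)$. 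Second, a \emph{random walk estimate}: nonamenability and bounded degree give, by a Cheeger-type argument, a constant $\rho<1$ such that the number of returns to $B_R(f)$ by the SRW from any vertex is stochastically dominated by a geometric random variable with parameter $1-\rho$. Using that the LERW has no more visits to any region than the underlying SRW, combining the two ingredients yields
$$\P(|h^x(f)|>k)\le\P(\#\text{SRW returns to }B_R(f)\ge e^{ck})\le\rho^{\exp(ck)}=\exp(-e^{c'k}).$$

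The main obstacle is the geometric lemma. In a generic nonamenable planar graph, nested cycles need not grow exponentially in combinatorial length: linear isoperimetry alone only gives that cycle lengths scale with the enclosed number of vertices, which a priori could grow only linearly in $j$. The extra exponential factor genuinely comes from the hyperbolic geometry encoded by the circle packing --- the combinatorial circumference of a cycle at packed hyperbolic depth $d$ around $f$ behaves like $\sinh d\asymp e^d$, so successive windings must use cycles of exponentially larger combinatorial size. Making this precise requires careful use of the Angel--Hutchcroft--Nachmias--Ray type theory of random walks on circle packings referenced in \Cref{thm:RW_circle_packing}, in particular to rule out the windings accumulating in a bounded hyperbolic region. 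The random walk estimate and the existence/finiteness of the limit are by contrast largely classical given the results already in the paper.
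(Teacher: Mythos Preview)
Your geometric lemma is where the argument breaks. The claim that a simple path from $f$ with winding exceeding $k$ must make $\exp(ck)$ excursions back to $B_R(f)$ is false: an outward spiral in the circle packing winds arbitrarily many times around $f$ while leaving $B_R(f)$ exactly once. What nonamenability does buy (and what the paper proves as \Cref{lem:winding_lemma}) is that the loop erasure of a path of length $k$ winds at most $C\log k$ around its starting point --- a statement about \emph{length}, not about returns. But since the LERW branch from $f$ is infinite, bounding its length is meaningless; you need a different way to isolate the ``winding part'' of the branch. The paper does this via the circle packing: by the spectral gap, the Euclidean radii $r(X_n)$ along the walk satisfy $r(X_n)<e^{-cn}$ for all $n\ge N$ with probability $1-(e^{2c}\rho)^N$ (\Cref{lem:circle_small}), so with probability $1-\exp(-c'C^r)$ the walk after time $C^r$ has Euclidean diameter at most $1/10$, and its loop erasure contributes at most $2\pi$ to the winding around $f$. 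The first $C^r$ steps then contribute at most $C'r$ winding deterministically by the length-winding lemma. Your ``returns to a ball'' mechanism does not capture this, and the random walk estimate you quote, while true, is not what is needed.

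There is also a smaller issue with the first part. You write the height as the winding of the tree path from $f_0$ to $f$, obtained by merging two LERWs. But in the limit the wired uniform spanning forest of $\Gamma$ has infinitely many one-ended components (\Cref{thm:UST_limit}), so $v_f$ and $v_{f_0}$ need not lie in the same component and there is no such tree path. The paper instead takes the single branch from $v_f$ to its limit point $x_f\in\partial\D$ (which exists by \Cref{lem:convergence_WUSF_ray}) and concatenates it with the boundary arc from $x_f$ to $x$; finiteness of $h^x(f)$ then follows directly from convergence of the branch, not from a merger time.
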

From the methods of the proof, it is not too hard to see that the bound above is sharp, although we do not prove this. It is also not too hard to see that without the assumption of nonamenability such uniform bound cannot hold: for example one can find large regions in the triangulation which look like the (six-regular) triangular lattice, and there the methods of our proof will yield an exponential lower bound on the height, namely, the winding of a uniform spanning tree branch started at the center of such a  region has exponential lower bound on the tail. We do not pursue the details of these claims for the sake of brevity.

Analogues can be drawn between \Cref{thm:height} and the results in \cite{PWY_expander,PWY_tree} for uniform lipschitz functions. 
Note that \Cref{thm:height} has an immediate consequence: the maximum of the height function in a graph distance ball of radius $n$ (call it $B_\Gamma(\rho, n)$) around $\rho$ grows at most like $\log \log |B_\Gamma(\rho, n)|$. We quickly mention here that in an expander of size $n$ it is known that uniform Lipschitz functions have maximal height of order $\log \log n$ \cite{PWY_expander,BHM_00}.  
It is worth mentioning also that it is not even known whether uniform lipschitz functions (with 0 boundary condition) is localized on $(\Gamma, \rho)$. All these results are in sharp contrast with those in planar lattices ($\Z^2$ ,hexagonal or triangular lattices depending on the model) where it is known that the height function is delocalized (i.e. variance of the height at $f$ diverges to $\infty$) both in the dimer case (for an appropriate variation of the lattice) \cite{KenyonGFF,BLR16} and in the uniform Lipschitz or homomorphism case, the latter has seen a huge surge of interest in the past few years \cite{CPST_delocalisation,duminil2022logarithmic,karrila2023logarithmic,glazman2021uniform,duminil2020delocalization}.

Despite being localized, one can ask other interesting questions about percolation of the height clusters. In particular, one may ask, does a localized height function model  with mean 0 sees a percolation of  the sites with height strictly positive? In the case of the Gaussian free field in Euclidean lattices, this has received a lot of attention in recent years (see \cite{rodriguez2015level} for an overview). In an integer valued localized model with low temperature and zero boundary conditions (like integer valued GFF) a simple Peierl's argument will tell us that the sites with height not 0 will not percolate. Nevertheless, it is an interesting question whether at high temperature we have percolation of strictly positive sites or not (it is easy to see that non-zero heights do percolate at high enough temperature for such models via a Dobrushin type argument). This is one of the motivations to ask if in the double dimer model (with zero height boundary condition, as is the case here), does one see an infinite component of height at least 1, although admittedly there is no temperature parameter to tune here. Our \Cref{thm:main} tells us that this is not the case for dimer model, it is mostly flat. We also prove the following theorem for the single dimer model.

\begin{thm}\label{thm:height_perc}
Let $\Gamma, \mu^x, (h^x(f))_{f \in G}$ be as in \Cref{thm:height_perc}.
 Then there exists a $k_0$ such that for all $k>k_0$, $\cH_k:= \{f:|h^x(f)| >k\}$ does nor contain any infinite component.
\end{thm}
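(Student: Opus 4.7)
The plan is to combine the double exponential tail from \Cref{thm:height} with a Peierls-style counting argument on lattice animals in the face graph of $G$, exploiting the bounded degree of $\Gamma$.

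First, I would observe that adjacent faces of $G$ differ in height by a uniformly bounded amount $C_0$, directly from the definition of the dimer height function. Hence once $k > C_0$, on every connected component of $\cH_k$ the heights share a common sign, and it suffices to rule out an infinite component of $\cH_k^+ := \{f : h^x(f) > k\}$ (the negative case being symmetric by the symmetry $h \mapsto -h$ built into the Temperleyan height).

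The key technical input I would set up is a joint tail bound of the form
\begin{equation}
\P(A \subseteq \cH_k^+) \le \bigl(C \exp(-e^{ck})\bigr)^{|A|/r}
\end{equation}
for any connected set of faces $A \ni f_0$, where $r$ is a (possibly mildly $k$-dependent) decorrelation scale. The approach is to invoke the Temperley-UST correspondence that already underlies \Cref{thm:height}: the height at $f$ corresponds to the winding of a uniform spanning tree branch rooted near $f$. Given an animal $A$, one can extract a subset $A' \subseteq A$ with $|A'| \ge |A|/r$ whose elements are pairwise at graph distance at least $r$ apart in $G$. The random walk decorrelation estimates on nonamenable circle packings that drive \Cref{thm:height} should then let one treat the windings of the distinct UST branches rooted near faces of $A'$ as approximately independent, so that the joint probability is controlled by the product of the marginal double-exponential bounds applied to each face of $A'$.

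Finally, a standard lattice animal count gives at most $D^n$ connected subsets of size $n$ of the face graph containing a fixed face $f_0$, where $D$ depends only on the maximal face degree. Writing $C_{f_0}$ for the cluster of $f_0$ in $\cH_k^+$, one then has
\begin{equation}
\P(|C_{f_0}| \ge n) \le D^n \bigl(C\exp(-e^{ck})\bigr)^{n/r} \longrightarrow 0 \quad (n \to \infty),
\end{equation}
provided $k$ is chosen large enough that $D \cdot (C\exp(-e^{ck}))^{1/r} < 1$. Since the face set is countable, a union bound then rules out any infinite component in $\cH_k$ almost surely. The main obstacle is the joint bound: the dimer height function does not admit a BK- or FKG-type inequality that would trivially reduce joint tails to marginals, and heights at distant faces are coupled through the UST structure. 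Establishing the required approximate independence therefore relies on the boundary theory of random walks on nonamenable circle packings, in the same spirit as the proofs of \Cref{thm:height} and \Cref{thm:main}.
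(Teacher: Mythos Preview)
Your proposal has a genuine gap, and the paper's argument is quite different in spirit.

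\textbf{The gap.} The crux of your argument is the joint tail bound
\[
\P(A \subseteq \cH_k^+) \le \bigl(C\exp(-e^{ck})\bigr)^{|A|/r},
\]
which you acknowledge is ``the main obstacle'' and do not prove. The decorrelation you need is not available from the paper's estimates. The only correlation bound proved here (\Cref{lem:correlation}) controls products of \emph{height differences} for faces that are simultaneously close to $\partial\D$ in the circle packing and Euclidean-separated; it says nothing about joint upper tails of heights at faces of a fixed animal sitting in the bulk, where the UST branches can and do interact. There is no FKG/BK structure to fall back on, and Wilson's algorithm makes the branches genuinely dependent once one is sampled. A second, smaller issue: your subsampling step ``$|A'|\ge |A|/r$'' is incorrect in a bounded-degree graph; a greedy $r$-net gives only $|A'|\ge |A|/|B(r)|\ge |A|/\Delta^r$. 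This alone is not fatal (the double exponential beats $\Delta^r D$ for large $k$), but it signals that the argument has not been worked out.

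\textbf{What the paper does instead.} The paper avoids any joint tail or Peierls count. It exploits the cycle-popping representation of Wilson's algorithm rooted at infinity: assign an independent simple random walk $X^v$ to every vertex $v$, define $N_v=\inf\{n:\sum_{k\ge n}r(X^v_k)<1/10\}$ (which has exponential tail by \Cref{lem:circle_small}), and run an exploration from $\rho$ that grows a new generation only through vertices with $N_v\ge k$. This exploration is dominated by a branching process with offspring mean $\Delta\,\E(N_v 1_{N_v\ge k})$, which is subcritical for $k$ large. Extinction produces a cutset $\cC$ around $\rho$ on which $N_v\le k$. Because cycle popping makes the WUSF a deterministic function of the walks and independent of the order of loop erasure, one may sample the tree by first erasing from the vertices of $\cC$; then \Cref{lem:winding_lemma} gives a \emph{deterministic} winding bound $O(\log k)$ for those branches, so the cluster of $\cH_{C\log k+1}$ containing $\rho$ is finite. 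No approximate independence of heights is ever invoked; the coupling through Wilson's algorithm is used rather than fought against.
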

We do not know if \Cref{thm:height_perc} is true in the setup of \Cref{thm:main0}, although we are tempted to conjecture that it is true. In fact our methods will show this to be true once we just have thin enough exponential tail on the height uniform over the faces of the graph,  depending only on $\Delta$. Note that the latter condition is far weaker than \Cref{thm:height}.
Questions analogous to \Cref{thm:height} for other height function models, particularly integer valued ones, could be of independent interest.

Finally we remark that we believe  bounded degree assumption to be not needed in any of our theorems, as long as the triangulation remains CP hyperbolic. However, for the sake of brevity, we do not pursue this direction of generalization.

\subsection{Outline of the proof} Broadly, our main tool to address the problem is to use the connection between height functions and winding of uniform spanning trees, which have been exploited heavily in the past few years \cite{BLR16,BLR_Riemann1,BLR_Riemann2,BLiu,BHS,ray2021quantitative}. The setup and this connection is explained in \Cref{sec:height}. The bijection holds between the dimer model on the wired-free uniform spanning tree pair on a  Temperleyan version $G$ of $\Gamma$, where the free tree is oriented towards a boundary vertex $\mathfrak b$. In the weak limit, this yields a dual wired-free uniform spanning tree pair, with the free tree having an oriented end.
Although it is well known that the weak limit of the dual free-wired uniform spanning tree does exist (see \cite{LyonsPeres}, Chapter 10 and references therein), a priori, from this description, it is not even clear if the weak limit exists for the dimer model as the oriented ray of the free tree might not converge locally. 

The way to understand this is to use the circle packing embedding of $(\Gamma, \rho)$ which goes back to the work of Koebe, Andreev and Thurston \cite{Th78,K36}. It is known since the work of He and Schramm \cite{HeSc} that a transient, one ended triangulation $\Gamma$ can be circle packed in the unit disc and the embedding is unique up to Mobius transforms of the disc \cite{Schramm91}. With this embedding, we prove that for every boundary point $x$  on the unit circle $\partial \D$, there exists an end of the free spanning tree converging towards $x$ almost surely. Furthermore, it is also known that every component of the wired spanning tree  is one-ended and converges to a boundary point. Thus if we take an exhaustion with the vertex $\mathfrak b$ converging towards $x$, we obtain a weak limit of the dimer model. Furthermore, in such a limit, the height function at $v$ (which is normalized to be 0 at $x$ in an appropriate sense) can be obtained by computing the winding of the wired spanning tree branch all the way to $\xi \in \partial \D$ and then adding the angle obtained by going along $\partial \D$ anticlockwise  started at $\xi$ and ending at $x$. With this setup, we analyze the simple random walk in \Cref{sec:height_percolation} to deduce the double exponential tail of the winding of the wired tree branch. We use this strong bound on the tail to deduce \Cref{thm:height} as well. The same argument will also show that the result of \Cref{thm:height} is also true for the difference of heights of two i.i.d.\ dimer covers sampled from the same limit measure. We note that this part of the argument do not require $(\Gamma, \rho)$ to be stationary, and works for any deterministic nonamenable triangulation.

Note that we assumed the graph $(\Gamma, \rho)$ to be reversible with respect to simple random walk, and in fact this graph decorated with the unoriented wired and the free uniform spanning tree also remains reversible. Nevertheless, adding the orientation to the free tree makes the graph lose its reversibility, and brings us to the setup of nonunimodular stationary graphs. One prime example of such graphs is a 3-regular tree oriented towards one of its ends, or the grandfather graph (see \cite{AL07}). Although we lose reversibility, we can make the model stationary with respect to random walk by choosing the free tree to be oriented towards a boundary point chosen using the exit measure of an independent simple random walk started at $\rho$.

Now we come to the crux of the argument for  \Cref{thm:main}. There are two cases: either there are infinitely many bi-infinite path or there are finitely many. The infinitely many bi-infinite paths case is quickly ruled out as this would yield infinite components of height for any $k \in \N$. Indeed, the height function along the faces adjacent to such a bi-infinite path is constant and hence form an infinite height component. To rule out finitely many, note that uniformly choosing a bi-infinite path in this case yields a stationary random marked graph as well. By stationarity, a simple random walk will cross this path infinitely often. Now if we run several independent random walks, using the estimates from \Cref{sec:height_percolation}, we can conclude that the height functions along the random walks  become decorrelated. On the other hand, since the walks cross the \emph{same} bi-infinite path infinitely often, some correlation remain along the walks no matter how long we run the walks. We can exploit this to arrive at a contradiction.

\paragraph{Acknowledgement:} We are thankful to Tom Hutchcroft, Benoit Laslier and Zhongyang Li for several useful discussion. We specially thank Benoit for his contributions during the initial parts of the project. While preparing this manuscript, a paper \cite{Li_DD} was independently posted to the arXiv by Zhongyang Li containing some results which partially overlap the results in this paper. However the techniques used by Li appear to be substantially different from ours. 
 
\section{Background}\label{sec:background}
\subsection{Stationary random graphs}\label{sec:stationary}

Let $\cG^{\bullet}_m$ denote the space of all locally finite graphs $G = (V,E)$ with a marked root vertex and a function $m:V\cup E \to \cM$ where $\cM$ is a countable space, viewed up to graph isomorphisms which preserve the root and the marks. This space can be endowed with the \emph{local topology} which turns this into a Borel space. A random rooted marked graph is a probability measure on $\cG^{\bullet}_m$. Let $(G, \rho, m)$ be a random rooted marked graph. We say $(G, \rho, m)$ is stationary if $(G, \rho, m) = (G, X_1, m)$ in distribution, where $X_1$ is the vertex obtained by running a simple random walk for one step started at $\rho$. We can similarly define the space of all bi-rooted, locally finite, marked graphs $\cG^{\bullet \bullet}_m$. A random rooted marked graph $(G, \rho, m)$ is reversible if $(G, \rho, X_1, m)$ is equal in distribution to $(G,X_1, \rho, m)$. Clearly a reversible random rooted graph is also stationary. Nevertheless, a stationary random rooted graph may not be reversible: take a three regular tree with marks being orientation, and all the edges are oriented towards a fixed end of the tree.

It is well known \cite{AL07} that reversible random rooted graphs are related to unimodular random graphs: simple inverse degree biasing a reversible random graphs yield a unimodular random graphs. However since the graphs we consider with the orientation of the free uniform spanning tree is not reversible, we cannot use the plethora of results for unimodular marked nonamenable graphs, and indeed many of the results are actually false in such setups. 

A \emph{planar map} is a proper embedding of a graph into a subset of the plane viewed up to orientation preserving homeomorphisms of the plane. A face of a map is the connected components of the complement of the embedding. A planar map is a \textbf{triangulation} if every face is a triangle, that is if every face is bounded by exactly three edges. Note that a consequence of the definition is that the triangulation has to be one ended.

\subsection{Winding of curves}\label{sec:winding}
The goal of this section is to recall several notions of windings of
 curves drawn in the plane, which we use in this paper. We refer to \cite{BLR16} for a more detailed exposition. A self-avoiding (or simple) curve
 in $\C$ is an injective continuous map $\gamma:[0,T] \mapsto \C$ for some $T \in
[0,\infty]$.

\medskip The \textbf{topological winding} of such a curve $\gamma$ around a point $p \notin \gamma[0,T]$ is defined as follows. We first
write
\begin{equation}
 \gamma(t)  - p = r(t)e^{i \theta(t)},
\end{equation}
where the function $\theta: [0,\infty) \to \R$ is taken to
be continuous, which means that it is unique modulo a global additive constant multiple of $2\pi$. We define the
winding for an interval of time $[s,t]$, denoted
$W(\gamma[s,t],p)$, to be
$$
W(\gamma[s,t],p) = \theta(t) - \theta(s)
$$ (note that this is uniquely defined).
Notice that if the curve has a derivative at an endpoint of $\gamma$, we can take $p$ to
be this endpoint by
defining
$$W(\gamma[0,t],\gamma(0)) := \theta(t)- \lim_{s \to 0} \theta(s)$$
and
similarly
$$W(\gamma[s,T],\gamma(T)) := \lim_{t
\to T} \theta(t)-\theta(s).
$$

With this definition, winding
is
additive: for any $0 \le s \le t \le T$
\begin{equation}
 W(\gamma[0,t],p) = W(\gamma[0,s],p) + W(\gamma[s,t],p).
\end{equation}

The notion of \textbf{intrinsic winding} we describe
now, also discussed in \cite{BLR16}, is perhaps a more natural definition of windings of curves. This notion is
the continuous analogue of the discrete winding of non backtracking paths in $\Z^2$
which can be defined just by the number of anticlockwise turns minus the number
of clockwise turns. Notice that we do not need to specify a reference point with
respect to which we calculate the winding, hence our name ``intrinsic" for this
notion.


{In this paper we will call  a curve $\gamma$ smooth if the map $\gamma$ is continuously
differentiable,
and for all $ t, \,\gamma'(t) \neq 0$.} We write
$\gamma'(t) = r_\i(t)e^{i \theta_\i(t)}$ where again
$\theta_{\i} : [0,\infty) \to \R$ is taken to be continuous. Then
define the intrinsic winding in the interval $[s,t]$ to be
\begin{equation}\label{E:windingsmooth}
W_{\i}(\gamma[s,t]) :
= \theta_\i(t)- \theta_\i(s).
\end{equation}
The total intrinsic winding is again defined to be
$\lim_{t \to T} W_{\i}(\gamma,[0,t])$ provided this limit exists. Note that this
definition does not depend on the parametrisation of $\gamma$ (except for the assumption of
non-zero derivative). The following topological lemma
from \cite{BLR16} connects the intrinsic and topological windings for smooth curves.


\begin{lemma}[Lemma 2.1 in \cite{BLR16}]\label{lem:int_to_top}
 Let $\gamma[0,1]$ be a smooth {simple} curve in $\C$ then,
$$
 W_{\i}(\gamma[0,1]) = W(\gamma[0,1],\gamma(1)) + W(\gamma[0,1],\gamma(0)).
$$
\end{lemma}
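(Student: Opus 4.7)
My plan is to recast the identity in terms of continuous branches of the relevant argument functions, reduce it to a single scalar equation, verify that equation modulo $2\pi$ by direct inspection, and then eliminate the residual integer ambiguity by a continuity/homotopy argument on the space of smooth simple curves.

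First I would fix a continuous branch $\theta_\i : [0,1] \to \R$ of $\arg \gamma'$, and choose continuous branches $\theta_0$ of $\arg(\gamma(\cdot) - \gamma(0))$ on $(0,1]$ and $\theta_1$ of $\arg(\gamma(\cdot) - \gamma(1))$ on $[0,1)$, normalized by their endpoint limits. The Taylor expansions
$$
\gamma(t) - \gamma(0) = t\gamma'(0) + O(t^2), \qquad \gamma(t) - \gamma(1) = -(1-t)\gamma'(1) + O((1-t)^2)
$$
show these limits exist modulo $2\pi$ and equal $\theta_\i(0)$ and $\theta_\i(1) + \pi$ respectively; I pick the branches so they equal these values exactly. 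Plugging into the definitions in the excerpt gives
$$
W_\i(\gamma) = \theta_\i(1) - \theta_\i(0), \quad W(\gamma,\gamma(0)) = \theta_0(1) - \theta_\i(0), \quad W(\gamma,\gamma(1)) = \theta_\i(1) + \pi - \theta_1(0),
$$
so the lemma reduces to the single identity $\theta_1(0) - \theta_0(1) = \pi$. Since $\theta_1(0)$ and $\theta_0(1)$ are the arguments of the antipodal vectors $\gamma(0)-\gamma(1)$ and $\gamma(1)-\gamma(0)$, they differ by an odd multiple of $\pi$, so the identity holds modulo $2\pi$; equivalently $F(\gamma) := \theta_1(0) - \theta_0(1) - \pi \in 2\pi\Z$.

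Next I would argue that $F$ is a continuous $2\pi\Z$-valued functional on the space of smooth simple curves with nowhere-vanishing derivative, equipped with the $C^1$ topology. Continuity holds because the branch prescription depends continuously on the endpoint tangents and the curve itself: the branches $\theta_0, \theta_1$ are obtained by integrating $\mathrm{Im}\bigl(\gamma'(t)/(\gamma(t)-\gamma(0))\bigr)$ and $\mathrm{Im}\bigl(\gamma'(t)/(\gamma(t)-\gamma(1))\bigr)$, whose apparent endpoint singularities are in fact removable (e.g., near $t = 0$ one has $\gamma'(t)/(\gamma(t)-\gamma(0)) = 1/t + O(1)$, with bounded imaginary part). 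A continuous discrete-valued functional is locally constant, so $F$ is constant on each path-component. A direct computation on a straight segment $\gamma(t) = (1-t)a + tb$ (where $\theta_\i, \theta_0, \theta_1$ are all constant in $t$) yields $F = 0$, so $F \equiv 0$ on any component that contains a straight line.

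The main obstacle is verifying path-connectedness of the ambient space of curves in the $C^1$ topology --- a classical topological fact about smooth embeddings of $[0,1]$ into $\R^2$. Explicitly, one may partition $[0,1]$ into finitely many subintervals on which $\gamma'$ stays inside an open half-plane (possible since $\gamma'$ is a continuous nowhere-zero map on a compact interval), straighten each piece by linear interpolation while remaining in that half-plane (which keeps each piece simple), and finally concatenate; the resulting deformation stays within smooth simple curves throughout, reducing to the straight-line case and completing the proof.
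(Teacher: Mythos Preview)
The paper does not prove this lemma at all: it is quoted verbatim as Lemma~2.1 of \cite{BLR16} and used as a black box. So there is no ``paper's own proof'' to compare against; the relevant comparison is with the argument in \cite{BLR16}, which uses Hopf's secant-map technique rather than a homotopy argument.

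Your reduction is correct and clean: with the branch normalisations you chose, the lemma is equivalent to $\theta_1(0)-\theta_0(1)=\pi$, and this certainly holds modulo $2\pi$. Your continuity claim for the functional $F$ is also fine, since $(\gamma(t)-\gamma(0))/t$ extends continuously to $t=0$ with value $\gamma'(0)$ and depends uniformly on $\gamma$ in $C^1$.

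The gap is in the last paragraph. Straightening each subarc towards its chord does keep that subarc simple (its derivative stays in the chosen half-plane), but nothing prevents two \emph{different} subarcs from colliding during the interpolation, and after straightening the concatenation is only piecewise linear, not smooth. So the deformation you describe does not stay inside the space of smooth simple curves, and your locally-constant argument does not go through as written. Path-connectedness of smooth embeddings of $[0,1]$ into $\R^2$ is true, but it requires a genuine topological input (e.g.\ isotopy extension or Schoenflies), not a piecewise-linear interpolation.

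The standard way to avoid this altogether --- and the route taken in \cite{BLR16} --- is Hopf's secant map. Define on the closed triangle $T=\{(s,t):0\le s\le t\le 1\}$ the continuous unit-vector field
\[
\Phi(s,t)=\frac{\gamma(t)-\gamma(s)}{|\gamma(t)-\gamma(s)|}\quad(s<t),\qquad \Phi(t,t)=\frac{\gamma'(t)}{|\gamma'(t)|},
\]
which is well-defined and continuous because $\gamma$ is simple with nonvanishing derivative. Since $T$ is simply connected, $\arg\Phi$ lifts to a continuous real function on $T$, so its total increment around $\partial T$ vanishes. The three sides of $\partial T$ contribute exactly $W(\gamma,\gamma(0))$, $W(\gamma,\gamma(1))$ and $-W_\i(\gamma)$, giving the identity directly. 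This bypasses any connectedness question about the space of curves.
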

{Note that the right hand side in the above expression makes sense as soon as the endpoints of $\gamma$ are smooth. From now on we therefore extend the definition of intrinsic winding to all such curves using \cref{lem:int_to_top}. Since the topological winding is clearly a continuous function on the curve away from the point $p$, this extension is consistent with any regularisation procedure.}
We also recall the following deformation lemma from \cite{BLR16} (see Remark 2.5).
\begin{lemma}\label{lem:winding_change_conformal}
Let $D$ be a domain and $\gamma \subset \bar D$ a simple smooth curve (or piecewise smooth with smooth endpoints). Let $\varphi$ be a conformal map on $D$ and let $\arg_{\varphi'(D)}$ be any realisation of argument on $\varphi'(D)$. Then
\begin{equation*}
W_{\i}( \varphi( \gamma ) )-  W_{\i}(\gamma) = \arg_{\varphi'(D)}(\varphi'(\gamma(1) ) ) - \arg_{\varphi'(D)}( \varphi'(\gamma(0) ) ).
\end{equation*}
\end{lemma}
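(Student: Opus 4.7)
The plan is a direct chain-rule computation on continuous branches of argument. Recall that for smooth $\gamma$ the intrinsic winding is $W_\i(\gamma)=\theta_\i(T)-\theta_\i(0)$, where $\theta_\i:[0,T]\to\R$ is any continuous lift of the argument of $\gamma'(\cdot)$, unique up to a global additive constant. The starting point is the chain rule $(\varphi\circ\gamma)'(t)=\varphi'(\gamma(t))\,\gamma'(t)$. Since $\varphi$ is conformal, $\varphi'$ is holomorphic and nowhere zero on $D$, so $t\mapsto\varphi'(\gamma(t))$ is a continuous map $[0,T]\to\C\setminus\{0\}$ and therefore admits a continuous argument lift $\Theta:[0,T]\to\R$.

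First I would verify that $\theta_\i(t)+\Theta(t)$ is itself a continuous lift of $\arg(\varphi\circ\gamma)'(t)$; this is immediate from the chain rule together with the fact that the argument of a product is the sum of the arguments modulo $2\pi$, and continuity is preserved under addition. Plugging into the definition \eqref{E:windingsmooth} of intrinsic winding for $\varphi\circ\gamma$ and subtracting $W_\i(\gamma)$, the $\theta_\i$ terms cancel and one is left with
$$W_\i(\varphi\circ\gamma)-W_\i(\gamma)=\Theta(T)-\Theta(0).$$

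Next I would identify the right-hand side with $\arg_{\varphi'(D)}(\varphi'(\gamma(1)))-\arg_{\varphi'(D)}(\varphi'(\gamma(0)))$. Any realisation $\arg_{\varphi'(D)}$ yields another continuous lift $t\mapsto\arg_{\varphi'(D)}(\varphi'(\gamma(t)))$ of $\arg\varphi'(\gamma(t))$, and any two continuous lifts along the same path must differ by a constant integer multiple of $2\pi$; that constant then cancels in the endpoint difference. The only technical wrinkle I anticipate is the extension to piecewise smooth curves with smooth endpoints: at an interior corner of $\gamma$ both $W_\i(\gamma)$ and $W_\i(\varphi\circ\gamma)$ pick up a jump equal to the turning angle, but since $\varphi$ is holomorphic and angle-preserving near the corner these jumps match and cancel in the difference. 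Equivalently, one may invoke \Cref{lem:int_to_top} to reduce the statement to topological windings at the two endpoints, where conformal invariance of topological winding (modulo $2\pi$) combined with the boundary correction $\arg\varphi'$ yields the same identity. No analytic input beyond the chain rule is required, so the main obstacle is simply careful bookkeeping of branches of $\arg$.
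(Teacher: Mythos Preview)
The paper does not supply its own proof of this lemma; it is quoted from \cite{BLR16} (see the sentence preceding the statement). Your chain-rule argument is correct and is exactly the standard derivation: writing $(\varphi\circ\gamma)'=\varphi'(\gamma)\cdot\gamma'$, lifting arguments continuously, and noting that the global branch $\arg_{\varphi'(D)}$ provides one such lift whose endpoint difference is branch-independent. The treatment of interior corners via angle-preservation of $\varphi$ is also right, so there is nothing to add.
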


\subsection{Temperleyan graphs, height function and windings}\label{sec:height}
In this section, we recall the existing theory of height function and winding and integrate them into the existing theory of embedding of hyperbolic planar graphs.

Let us first recall the construction of Temperleyan graphs which goes back to Temperley-Fisher \cite{TF_61} and we will consider a version exploited in \cite{KPWtemperley,BLR16,BLR_Riemann1}. Let $\Gamma^\dagger$  be a planar graph embedded properly in the plane and assume that the complement of the embedding is a disjoint union of topological discs . Let $\Gamma$ be its dual graph also embedded properly so that $\Gamma$ and $\Gamma^\dagger$ cross at a single point. We assume both $\Gamma, \Gamma^\dagger$ do not contain any self-loops.
Let $\hat G$ be a finite graph obtained by superimposing a graph $\Gamma$ and its dual as described in \Cref{fig:temp}: introduce a white vertex on every point where an edge and its dual crosses. This yields a bipartite planar graph where the black vertices are those in $\Gamma \cup  \Gamma^\dagger$. Let $G$ be the graph obtained by removing two black vertices from $\hat G$, one corresponding to the outer face and another on the boundary cycle of $\hat G$. Call the former vertex $\partial$ (expanded into a circular arc on the left side of \Cref{fig:temp}) and the latter $\mathfrak b$.  
\begin{figure}[h]
\centering
\includegraphics[scale = 0.53]{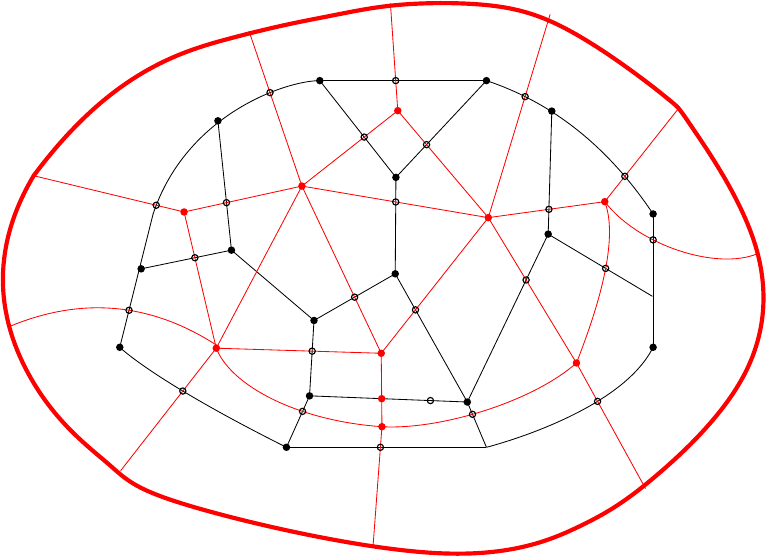}\qquad
\includegraphics[scale  = 0.53]{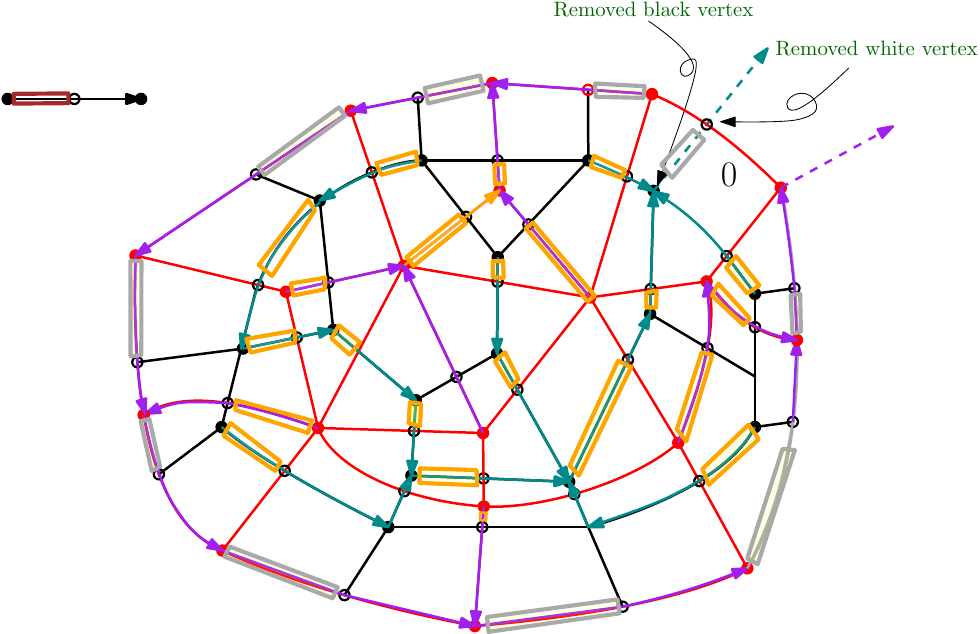}
\caption{Left: The black graph is $\Gamma^\dagger$ and the red graph is $\Gamma$, the outer vertex is expanded into a circular arc for convenience. Right: The extended graph $\bar G$ is drawn. The purple tree is the wired spanning forest, where the circular arc corresponding to the outer vertex is artificially added. The green tree is the free spanning forest oriented towards the removed vertex. The dotted arrows can be artifically expanded into an infinite graph to facilitate the understanding of the height function and winding correspondence. The face $\mathfrak f$ with height 0 is marked. The dimers artificially added on the boundary cycle is marked gray. The removed black vertex is $\mathfrak b$ and the removed white vertex is $\mathfrak w$.}\label{fig:temp}
\end{figure}

It is a standard fact that the graph $G$ (see \cite{BLR16,KPWtemperley}) admits a dimer cover.  Furthermore, there is a simple correspondence between dimer coverings of $G$ and uniform spanning trees of $\Gamma, \Gamma^\dagger$ which we now describe. Let $\bf m$ denote a dimer covering of $G$. Let $(b,w)$ be an edge covered by a dimer in $G$ with $b$ being the black vertex and $w$ the white vertex. Simply draw an arrow as in \Cref{fig:temp}. It is known (see \cite{KPWtemperley,BLR_Riemann1}) that this yields two spanning trees, one of $\Gamma$ and another of $\Gamma^\dagger$. Furthermore, the spanning tree of $\Gamma^\dagger$ is oriented towards $\mathfrak b$ and that of $\Gamma$ is oriented towards $\partial$.

We will use the following extension of the trees \emph{without orientation} to infinite volume. It is a well established fact that wired uniform spanning trees (and) consequently the free uniform spanning forests have a weak limit (See \cite{LyonsPeres}, Chapter 10 and references therein). Let $(\Gamma_i, \Gamma_i^\dagger)$ be an exhaustion of $\Gamma, \Gamma^\dagger$ as described in the introduction. Let $T_i, T_i^\dagger$ be the dual wired-free pair of \textbf{unoriented} uniform spanning trees of $\Gamma, \Gamma^\dagger$ respectively. 
\begin{thm}[\cite{LyonsPeres}, Chapter 10]\label{thm:UST_limit}
The weak limit of $T_i, T_i^\dagger$ exists, call them $T, T^\dagger$ respectively. Furthermore, $T$ has infinitely many infinite components, every component of $T$ is one-ended almost surely and $T^\dagger$ is connected almost surely.
\end{thm}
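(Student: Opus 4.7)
The plan is to assemble the statement from four now-classical ingredients, each of which is covered in \cite{LyonsPeres}, Chapter 10. I will describe the order in which I would put them together, keeping in mind that nothing here is specific to the circle-packed setup and everything holds for any transient, bounded degree, one-ended planar triangulation.

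First, existence of the limits. For the wired tree $T_i$, I would use the standard monotonicity / Wilson-algorithm-rooted-at-infinity argument: for every finite set of edges $S \subset E(\Gamma)$, $\mathbb P(S \subset T_i)$ is monotone in $i$ because wiring more vertices together can only decrease probabilities of finite edge configurations. Combined with the fact that $\Gamma$ is transient (so loop-erased random walks from any vertex converge a.s.), Wilson's algorithm rooted at infinity produces a well-defined random spanning forest $T$ whose finite-dimensional marginals are precisely these limits. For the free tree $T_i^\dagger$ one runs the same argument on the dual, using the opposite monotonicity (adding a free boundary can only increase finite marginal probabilities relative to wiring). This yields the weak limits $T$ and $T^\dagger$.

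Second, planar duality. In each finite exhaustion, by the Temperley/UST planar duality used in \Cref{sec:height}, an edge $e$ of $\Gamma_i$ lies in $T_i$ if and only if its dual edge $e^\dagger$ does not lie in $T_i^\dagger$. This is a statement about a single finite sample, so it passes to the weak limit: $T$ and $T^\dagger$ remain dual as unoriented subgraphs of $\Gamma$ and $\Gamma^\dagger$ respectively.

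Third, the topological structure. The statement that every component of the wired uniform spanning forest is one-ended a.s.\ on a transient, bounded degree graph is the Benjamini--Lyons--Peres--Schramm theorem (\cite{LyonsPeres}, Thm.\ 10.1 / Ch.\ 10). Its proof uses Wilson's algorithm together with a quantitative estimate showing that two independent loop-erased walks from a given vertex, conditioned on coalescing far away, actually coalesce quickly — ruling out a second end. Once one-endedness is known, the duality in the previous paragraph forces $T^\dagger$ to be connected: each pair of dual faces can be joined by a dual path precisely because no finite cycle of $T$ can separate them, which is the content of having no extra ends in each component of $T$. Conversely, any two components of $T$ would produce a bi-infinite dual path in $T^\dagger$ obstructing connectivity, but the one-ended components together with planarity exactly balance the counts.

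Finally, that $T$ has infinitely many infinite components. This is where one-endedness versus triviality of boundary enters. Because $\Gamma$ is transient with a non-trivial Poisson boundary (which for hyperbolic triangulations is the circle $\partial \mathbb D$ via \Cref{thm:RW_circle_packing}), two independent loop-erased walks from far apart vertices have positive probability, bounded away from zero, of exiting to distinct boundary points and thus of never meeting. Running Wilson's algorithm from a countable sequence of such widely separated vertices and using a standard $0$--$1$ / ergodicity argument on the stationary root yields infinitely many distinct trees in $T$ almost surely. The main obstacle among these four steps is the one-endedness of WUSF components, which is the genuinely hard input and I would import directly from \cite{LyonsPeres}; the rest are bookkeeping with monotonicity and planar duality.
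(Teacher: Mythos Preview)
The paper does not give its own proof of this theorem; it is stated purely as a citation of known results from \cite{LyonsPeres}, Chapter~10 (and indeed the surrounding text says only that ``it is a well established fact\ldots''). So there is nothing in the paper to compare your sketch against beyond the reference itself.

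Your outline is a reasonable reconstruction of the standard ingredients (monotone limits for WUSF/FUSF, planar duality, one-endedness of WUSF components \`a la BLPS, and disconnectedness of the WUSF). Two small points are worth tightening. First, the sentence ``Conversely, any two components of $T$ would produce a bi-infinite dual path in $T^\dagger$ obstructing connectivity'' is backwards: having multiple WUSF components is exactly what \emph{does} happen here, and it is compatible with---indeed equivalent to, via duality---$T^\dagger$ being a single tree with many ends. What you want is that one-endedness of each WUSF component rules out dual cycles, hence $T^\dagger$ is acyclic; connectivity of $T^\dagger$ is then the dual statement to $T$ having no finite components. Second, your argument for ``infinitely many components'' invokes ergodicity with respect to a stationary root, but \Cref{thm:UST_limit} is stated in the paper for deterministic bounded degree transient triangulations, before any stationarity is assumed. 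The cleanest route in this planar setting is: transience plus bounded degree implies CP hyperbolicity, hence nontrivial harmonic Dirichlet functions, hence WUSF $\neq$ FUSF, hence the WUSF is disconnected; then run Wilson's algorithm from vertices near many well-separated boundary arcs (exactly as the paper does later in \Cref{prop:convergence_FUSF_ray}) to produce infinitely many distinct components.
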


We now define a function real valued function $h$ from the faces of $G$ to $\R$ which is called the height function, and it represents the dimer cover in a convenient way. Furthermore, the height function  is intimately related to the winding of the branches of the uniform spanning trees of $\Gamma, \Gamma^\dagger $. We follow the exposition of \cite{BLR_Riemann1}. However, to properly define the boundary conditions, it is convenient to modify $G$ slightly which we describe now.

 We expand the outer vertex $\partial$ of $\Gamma$ into a cycle $C_\partial$ of length equal to the degree of $\partial$. Then we introduce a white vertex at the center of every edge of $C_\partial$, and join every vertex in the boundary cycle of $\Gamma^\dagger$  to a white vertex as shown in \Cref{fig:temp}. Now we add a dimer alternately joining vertices of $C_\partial$ and the white vertices, except for the white vertex $\mathfrak w$ adjacent to $\mathfrak b$. Let $\mathfrak e$ denote the edge joining $\mathfrak b$ and $\mathfrak w$ and we further add a dimer to $\mathfrak e$ (these artifically added dimers are colored gray in \Cref{fig:temp}). Note that  the uniform dimer cover in the rest of $G$ has the same law as the dimer cover of this extended graph with these boundary conditions.  Call this extended graphs $\bar {\Gamma}, \bar { \Gamma^\dagger}, \bar G$.

An advantage of this boundary condition is that the Temperleyan bijection for any dimer cover $\sf m$ of $G$ takes gives us a particularly nice pair of spanning trees $(T_{\sf m}, T_{\sf m}^\dagger)$ of $\bar \Gamma, \bar \Gamma^\dagger$. For $\Gamma$, we get a spanning tree which includes all but one edge of $C_\partial$, and this tree is oriented along this cycle as shown by the purple tree in \Cref{fig:temp}. We will call this tree $T_{\sf m}$ the \textbf{wired spanning tree}. The spanning tree $T^\dagger_{\sf m}$ of $\Gamma^\dagger$ is oriented towards ${\mathfrak b}$. We will call this the \textbf{free spanning tree}. Ignoring the edges in $C_\partial$, we see that  $(T_{\sf m}, T_{\sf m}^\dagger)$ are really spanning trees of $\Gamma, \Gamma^\dagger$, extended in a nice way because of the imposed boundary conditions.

\begin{figure}[h]
\centering
\includegraphics[scale = 0.5]{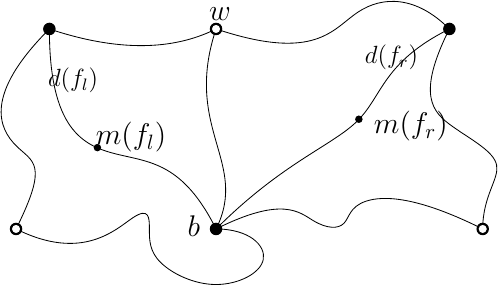} \qquad \qquad \qquad \qquad
\includegraphics[scale = 0.5]{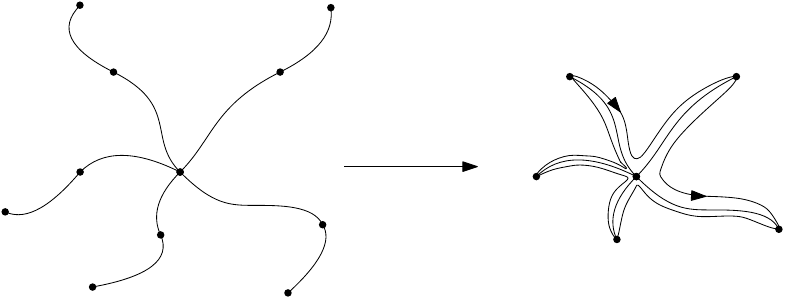}
\caption{Left: Midpoints drawn at the diagonal. Right: Proof that $\omega_{\text{ref}}$ is a valid reference flow.}\label{fig:ref}
\end{figure}

Let $F(\bar G)$ denote the faces of $\bar G$ except the outer face, and note that every element of $F(\bar G)$ is a quadrangle (i.e. bounded by four edges) with diagonally opposite vertices being of the same color. 
Draw a diagonal joining the black vertices in every face in $F(G)$ and let $m(f)$ denote the midpoint of this diagonal on face $f$ (see \Cref{fig:ref}). We define the reference flow as follows. Let $w,b$ be two adjacent white and black vertices and let $f_l, f_r$ denote the faces of $\bar  G$ to the left and right of the oriented edge $(w,b)$. Here we only consider edges with at least one vertex in the unextended graph $G$, hence the edges along the boundary cycle $C_\partial$ are excluded. Let $m(f_l), m(f_r)$ denote their midpoints. Define the reference flow to be 
 \begin{equation}
 \omega_{\text{ref}} (wb):=\frac1{2\pi}(W_{\i} ((m(f_r),b) \cup (b,m(f_l))) + \pi).
 \end{equation}
It is proved in \cite[Lemma 4.3]{BLR_Riemann1} that $\omega_{\text{ref}}$ is a valid reference flow where the flow out of every black vertex in $G$ is -1 and that out of every white  vertex in $G$ is $1$. Let $\omega_{\text{dim}}((w,b)) = 1 = -\omega_{\text{dim}}((b,w))$ if and only if $(w,b)$ is covered by a dimer. Then $\omega_{\text{dim}} - \omega_{\text{ref}}$ denotes a divergence free flow on the edges of $G$, and hence it allows us to define a height function on $F(\bar G)$, up to a global shift.  Let $\mathfrak f$ denote the face to the right of the edge $\mathfrak e$ and fix the height of $\mathfrak f$ to be $0$. This completes the description of the height function $h_{\bf m}: F(\bar G) \to \R$.



With this convention, to calculate the height function of any internal face $f$ of $\hat G$, we can compute the winding of the branch of either $T_{\sf m}$ or $T^\dagger_{\sf m}$ as follows. Let $v^\dagger_f$ and $v_f $ denote the vertices corresponding to $\Gamma$ and $\Gamma^\dagger$ respectively incident to $f$. Let $\gamma$ denote the branch of $T^\dagger_{\sf m}$ connecting $v^\dagger_{f}$ and $v^\dagger_{\mathfrak f}$. Now concatenate $(m(f), v^\dagger_f) , \gamma, (v^\dagger_{\mathfrak f},m(\mathfrak f))$ to define the path $\gamma^\dagger_{f}$. Similarly define $\gamma_f$.

\begin{lemma}[\cite{BLR_Riemann1}]\label{lem:height_winding_finite}
We have
$$
h(f) = W_{\i}(\gamma_{T_{\sf m}}(f) )= W_{\i}(\gamma_{T_{\sf m}^\dagger}(f))
$$
\end{lemma}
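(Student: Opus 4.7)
My plan is to prove both equalities by a path-integral telescoping argument that reduces the claim to the local compatibility between the reference flow $\omega_{\text{ref}}$ and the intrinsic winding around a single black vertex of $G$. I will focus on the free-tree identity $h(f) = W_{\i}(\gamma_{T_{\sf m}^\dagger}(f))$; the wired-tree identity then follows by the symmetric roles that $\Gamma$ and $\Gamma^\dagger$ play in both the definition of $\omega_{\text{ref}}$ and the Temperley bijection.

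Step 1 (path-integral representation of $h$). Since $\omega_{\text{dim}} - \omega_{\text{ref}}$ is a divergence-free flow on $G$ and $h(\mathfrak f) = 0$ by convention, the height at any face $f$ can be written as
\begin{equation*}
h(f) = \sum_{i=1}^n \bigl(\omega_{\text{dim}}(w_i b_i) - \omega_{\text{ref}}(w_i b_i)\bigr),
\end{equation*}
for any sequence $\mathfrak f = f_0, f_1, \ldots, f_n = f$ of faces of $\bar G$ with consecutive faces sharing an edge $(w_i, b_i)$ of $G$, oriented so that $f_i$ lies on the left. The key is to choose this face path wisely.

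Step 2 (shadowing the tree). Let $v^\dagger_f = u_0, u_1, \ldots, u_k = v^\dagger_{\mathfrak f}$ be the sequence of dual vertices visited by the branch of $T_{\sf m}^\dagger$ from $v^\dagger_f$ to $v^\dagger_{\mathfrak f}$. Between $u_{j-1}$ and $u_j$ I insert an arc in face-space that rotates around $u_{j-1}$, crossing dual-type edges $(w, u_{j-1})$ in cyclic order, followed by a single primal-type crossing at the midpoint of the primal edge dual to $(u_{j-1}, u_j)$. Concatenating these arcs for $j = 1, \ldots, k$, and inserting a final rotation around $u_k = v^\dagger_{\mathfrak f}$ to reach $\mathfrak f$, yields a face path from $\mathfrak f$ to $f$ whose trace shadows the smooth curve $\gamma_{T_{\sf m}^\dagger}(f)$.

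Step 3 (matching term by term). I split the sum in Step 1 into contributions from rotations (dual-type crossings) and contributions from the primal-type crossings corresponding to tree edges. For the reference flow, the local terms $W_{\i}((m(f_r), b) \cup (b, m(f_l)))$ telescope around each fixed black vertex visited by \cref{lem:int_to_top} and additivity of intrinsic winding; combined with the $\pi$-corrections in $\omega_{\text{ref}}$ and their cancellation across consecutive crossings, this reconstructs $W_{\i}(\gamma_{T_{\sf m}^\dagger}(f))$ exactly. For the dimer flow, I aim to show $\sum_i \omega_{\text{dim}}(w_i b_i) = 0$: by the Temperley correspondence, the only dimer edges incident to the face path are those encoding edges of $T_{\sf m}^\dagger$ oriented towards $\mathfrak b$, and along the rotation arc at each $u_j$ the unique such dimer appears with opposite sign than it does in the rotation arc at $u_{j-1}$, so the contributions cancel in pairs (telescoping to zero at the endpoints because $\gamma_{T_{\sf m}^\dagger}(f)$ has no tree edge incident to $u_0$ or $u_k$ on the ``$\mathfrak b$-side'').

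The main obstacle I anticipate is the sign- and $\pi$-bookkeeping in Step 3. To contain it, I would first verify the two base cases $k = 0$ (two faces at the same dual vertex, where only rotation arcs appear and the winding reduces to a single angle at $u_0$) and $k = 1$ (one tree edge, where the $\pi$-correction must cancel cleanly against the turn at $u_0$ and $u_1$), and then extend by induction on $k$ using the additivity of both $h$ under concatenation of face paths and of $W_{\i}$ under concatenation of curves.
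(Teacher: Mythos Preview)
The paper does not supply its own proof of this lemma: it is quoted verbatim from \cite{BLR_Riemann1} (with the reference in the lemma header) and used as a black box. So there is nothing in the present paper to compare your argument against line by line.

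That said, your strategy---shadow the tree branch $u_0,\dots,u_k$ by a face path in $\bar G$, telescope the angle terms in $\omega_{\text{ref}}$ to recover $W_{\i}$, and show the $\omega_{\text{dim}}$ contributions vanish---is exactly the argument used in \cite{BLR_Riemann1} (and earlier in \cite{BLR16}). So you are not proposing a different route; you are reconstructing the cited proof.

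One point worth tightening in your Step~3. Rather than arguing that dimer crossings ``cancel in pairs'' between the rotation arcs at $u_{j-1}$ and $u_j$, it is cleaner to choose the face path so that it crosses \emph{no} dimer at all. At each $u_j$ the unique dimer incident to $u_j$ is $(u_j,w_j)$, where $w_j$ is the white vertex on the tree edge $(u_j,u_{j+1})$; if you rotate around $u_j$ from the $w_{j-1}$--side and stop at the face just \emph{before} $(u_j,w_j)$, then make your primal-type crossing across $(v,w_j)$, that primal edge is never a dimer (since $w_j$ is already matched to $u_j$). With this choice $\sum_i \omega_{\text{dim}}(w_ib_i)=0$ holds trivially, and the $\pi$-bookkeeping in $\omega_{\text{ref}}$ reduces to counting that the number of non-dimer crossings around each $u_j$ matches the turn there. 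Your pairwise-cancellation phrasing can be made to work too, but it forces you to track the dimer $(u_j,w_j)$ appearing once in the $u_j$-arc and once in the $u_{j+1}$-arc with opposite orientation, which is exactly the ``sign- and $\pi$-bookkeeping'' you flag as the main obstacle.
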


\begin{lemma}[Temperleyan bijection]
Let $G, \Gamma, \Gamma^\dagger$ be as above.
Applying the Temperleyan bijection to a uniform dimer covering $\sf M$ of $G$, we obtain a pair $(\vec T^\dagger_{\sf M}, \vec T_{\sf M})$ where the edges of $\vec T^\dagger_{\sf M}$ restricted to $\Gamma$ is a spanning arborescence of $\Gamma ^\dagger$ oriented towards $\mathfrak b$ and $\vec T_{\sf M}$ is a uniform  spanning arborescence of $\Gamma$ oriented towards $\partial$. 
\end{lemma}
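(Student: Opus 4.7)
The plan is to make the map $\sf M \mapsto (\vec T^\dagger_{\sf M}, \vec T_{\sf M})$ completely explicit and then to verify (i) that the image consists of a pair of oriented spanning arborescences with the required sinks, (ii) that the map is a bijection onto such pairs, and (iii) that the uniform measure is preserved. The uniformity will then be immediate because the map is deterministic and bijective.

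First I would describe the map. For each black vertex $b \in V(G)$ there is a unique dimer of $\sf M$ at $b$, with white endpoint $w$. The white vertex $w$ sits at the crossing of a unique primal edge $e = \{u,v\} \in E(\Gamma)$ with its dual edge $e^\dagger = \{u^\dagger, v^\dagger\} \in E(\Gamma^\dagger)$. If $b \in V(\Gamma)$, say $b = u$, include in $\vec T_{\sf M}$ the oriented edge $u \to v$ of $\Gamma$. If $b \in V(\Gamma^\dagger)$, say $b = u^\dagger$, include in $\vec T^\dagger_{\sf M}$ the oriented edge $u^\dagger \to v^\dagger$ of $\Gamma^\dagger$. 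By construction every black vertex of $G$ (equivalently every vertex of $\Gamma$ except $\partial$ and every vertex of $\Gamma^\dagger$ except $\mathfrak b$) has out-degree exactly one, and $\partial, \mathfrak b$ have out-degree zero.

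Next I would verify that $\vec T_{\sf M}$ (respectively $\vec T^\dagger_{\sf M}$) is acyclic, from which it follows that it is a spanning arborescence of $\Gamma$ oriented to $\partial$ (respectively of $\Gamma^\dagger$ oriented to $\mathfrak b$): indeed, by Euler's formula and the vertex counts recalled in \Cref{sec:height}, the numbers of edges produced in $\Gamma$ and $\Gamma^\dagger$ are exactly $|V(\Gamma)|-1$ and $|V(\Gamma^\dagger)|-1$, so acyclicity forces connectedness. Acyclicity is the standard planar/duality argument: each oriented edge $u \to v$ of $\vec T_{\sf M}$ sits on the primal edge $e$ whose midpoint $w$ is matched to $u$, while the corresponding oriented edge of $\vec T^\dagger_{\sf M}$ would have to come from a dimer using the same white vertex $w$ at the crossing of $e$ with $e^\dagger$ — which is impossible since $w$ is matched to $b=u$. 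Hence the supports of $\vec T_{\sf M}$ and $\vec T^\dagger_{\sf M}$ (drawn as the two half-edges from $b$ to $w$ and $w$ to the other endpoint) are non-crossing systems of simple arcs in the plane. A cycle in $\vec T_{\sf M}$ would separate the plane into two regions each of which contains at least one vertex of $\Gamma^\dagger$; the $\Gamma^\dagger$-arcs in the bounded region cannot cross the cycle and so must all point to each other, contradicting the fact that each such vertex has a unique out-edge and no $\mathfrak b$ is available inside (unless the cycle surrounds $\mathfrak b$, in which case the same argument applies to the unbounded region and $\partial$). The same argument handles $\vec T^\dagger_{\sf M}$.

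Finally I would exhibit the inverse: given a pair of arborescences $(\vec S^\dagger, \vec S)$ with sinks $\mathfrak b, \partial$ and whose half-edge drawings are non-crossing at each white vertex, read off a dimer at each black vertex $b$ by taking its unique out-edge $b \to v$, identifying the white vertex $w$ lying on the corresponding edge of $\Gamma$ or $\Gamma^\dagger$, and declaring $\{b,w\}$ to be a dimer. A counting check shows every white vertex is covered exactly once, giving a perfect matching of $G$ whose image under the forward map is $(\vec S^\dagger, \vec S)$. The main obstacle is really only the planar acyclicity argument sketched above; once that is in place the bijection and the preservation of the uniform measure follow immediately, and the statement is proved.
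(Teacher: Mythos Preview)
The paper does not actually prove this lemma; it is stated as a classical result with references to \cite{KPWtemperley,BLR_Riemann1} (and the surrounding text in \Cref{sec:height} already sketches the forward map informally). So there is no ``paper's own proof'' to compare against here.

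Your proposal is essentially the standard Kenyon--Propp--Wilson argument and is correct in outline. Two small points worth tightening. First, in your description of the inverse you require the pair $(\vec S^\dagger,\vec S)$ to have ``half-edge drawings that are non-crossing at each white vertex''; the cleaner and equivalent condition is simply that $S$ and $S^\dagger$ are planar duals, i.e.\ $e\in S \iff e^\dagger\notin S^\dagger$, which is automatic once both are spanning trees and the edge counts match. This is exactly what guarantees that every white vertex is covered by exactly one dimer in your inverse construction. Second, your acyclicity argument (``a cycle in $\vec T_{\sf M}$ forces the enclosed dual vertices to point at each other'') is the right idea but would benefit from being phrased as the usual nested-cycle descent: a primal cycle not enclosing $\mathfrak b$ forces a dual cycle strictly inside it, which (if it does not enclose $\partial$) forces a smaller primal cycle, and finiteness gives the contradiction. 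With those two clarifications the argument is complete.
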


We now explain how to interpret the weak limit of dimers in in the context of the weak limit of the spanning trees as in \cref{thm:UST_limit}.
To use the operation of Temperleyan bijection, we need to orient the trees towards one of the ends.
Once we can fix an end of each component of $T,T^\dagger$, we can revert the Temperleyan bijection to obtain a dimer configuration. Note that there is no ambiguity in the choice of the orientation of $T$. However the free tree $T^\dagger$ has various choices of orientation which in finite volume depends on the choice of the removed vertex $\mathfrak b$. Nevertheless, any two subsequential weak limits can be related to each other modulo a change in orientation dictated by the choice of the end of $T$. We expand on how to choose a sequence of exhaustion with $\mathfrak b$ judiciously so that there is a weak limit satisfying various invariance properties inherited by the graph decorated by the uniform spanning forests in \Cref{sec:embedding}. It will also be clear later that arbitrary sequence of choices of $\mathfrak b$ can result in the sequence of dimer measures not converging at all, a phenomenon which is absent in amenable setups like $\Z^2 $ or other planar regular lattices \cite{Kenyon_ci,BLR16}.

We finish this section by describing the celebrated Wilson's algorithm to sample the Uniform spanning tree. In order to do so, we need to describe the so called loop erased random walk (LERW) whose study was pioneered by Lawler (see \cite[Chapter 11]{Lawlerbook} for a detailed introduction). In a finite graph, let $\partial$ be a fixed vertex which we call the boundary. The LERW from $x$ to $\partial$ is sampled as follows. 

\begin{itemize}
\item Start a simple random walk from $x$ and continue until it hits $\partial$.
\item Erase the cycles chronologically. More precisely, having defined the walk $(x_0, \ldots, x_i)$, perform a simple random walk step from $x_i$ along the edge $(x_i,x_{i+1})$. If $x_{i+1} = x_j$ for some $j <i$, then define the walk after the next step to be $(x_0, \ldots, x_j)$ (we erased the loop $(x_j,x_{j+1}, x_{j+2}, \ldots, x_i,x_{i+1}=x_j)$).
\end{itemize}
To finish Wilson's algorithm, simply run loop erased random walk from the vertices in any arbitrary order, and then after each step, assume that the  loop erased paths sampled so far along with $\partial$ is the new updated boundary.

In an infinite transient graph, the wired uniform spanning forests can be sampled using a very similar method called Wilson's algorithm \emph{rooted at infinity} (see \cite{LyonsPeres}). The algorithm is the same except we run the loop erased random walk until it hits one of the previously sampled branch or is run forever. Since the walk is transient, this process is well defined.

\subsection{Embeddings and circle packing} \label{sec:embedding}


Assume  that $\Gamma$ is transient, one ended triangulation with degrees uniformly bounded above by $\Delta$. 
\begin{figure}
\centering
\includegraphics[scale = 0.3]{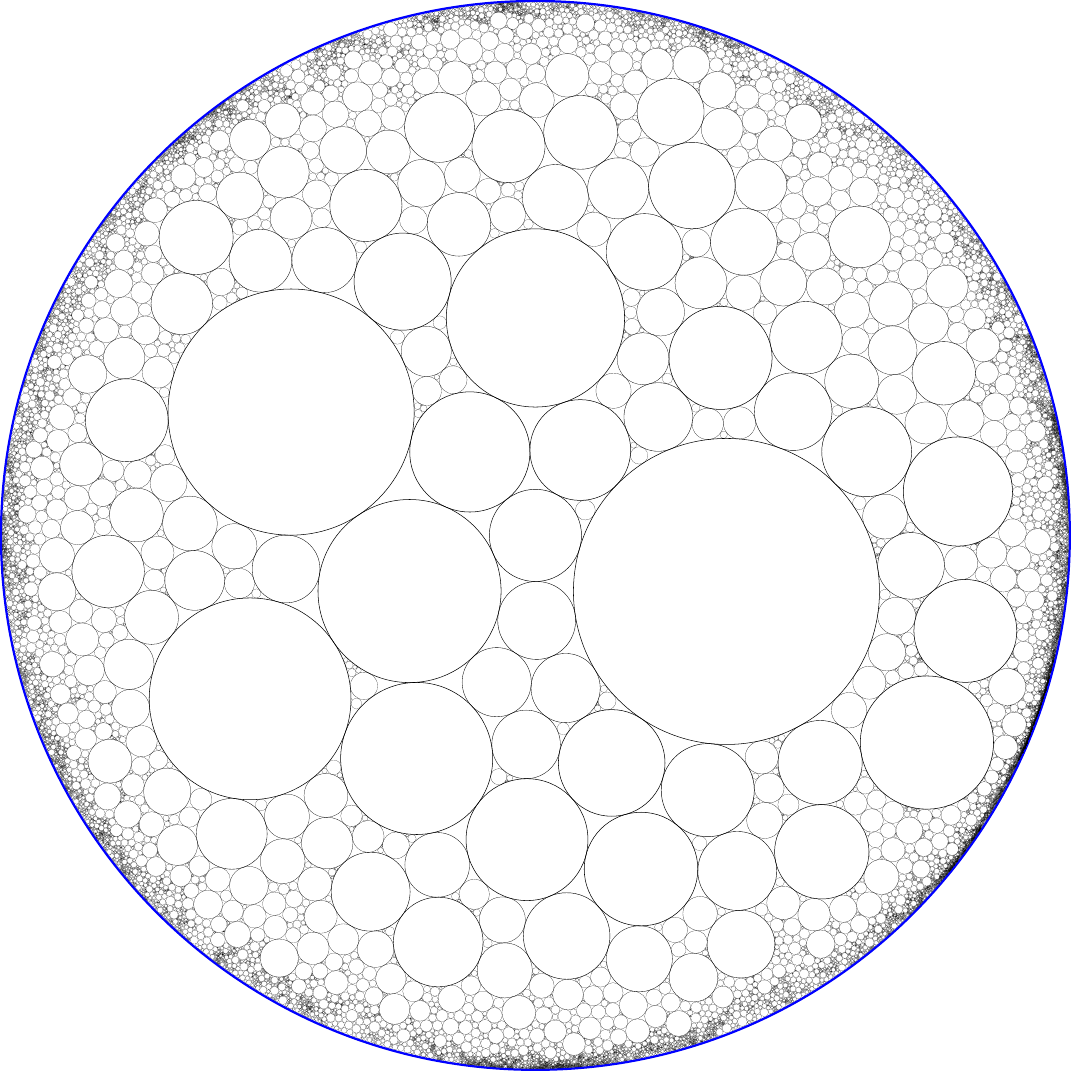}
\caption{Circle packing of $(\Gamma, \rho)$. \copyright{T. Hutchcroft.}}
\end{figure}

A \textbf{circle packing} is a way to draw $\Gamma$ in the plane so that each vertex is the center of a circle, and two adjacent vertices are connected by an edge if and only if the circles touch each other. The carrier of a circle packing is the union of all the discs in the packing
together with the curved triangular regions enclosed between each triplet of circles
corresponding to a face (the interstices). Given some planar domain $D$, we say
that a circle packing is in $D$ if its carrier is $D$. It is known \cite{HeSc} that the triangulations we are considering can be circle packed with the unit disc as the carrier (such triangulations are VEL hyperbolic).
Furthermore,  the rigidity theorem of Schramm \cite{Schramm91} tells us that the circle packing with carrier  the unit open disc $\D$ is unique up to Mobius transforms of the unit disc $\D$, that is, if we take two circle packings in the unit disc they can be mapped to each other by an appropriate Mobius transform. Call $\sf \Gamma$ the embedded graph. We shall need the following theorem about simple random walk on $\sf \Gamma$.
\begin{thm}[\cite{BS96a,ABGN14,AHNRCP15}]\label{thm:RW_circle_packing}
Let $(X_n)_{n \ge 0}$ be a simple random walk on $\Gamma$. Let $z(X_n)$ denote the location of the walk in the embedding $\sf \Gamma$. Then the following holds.\begin{enumerate}[a.]
\item The sequence $z(X_n)$ almost surely converges to a point in $\partial \D$. Call this point $z(X_\infty)$.
\item The law of $z(X_\infty)$ is fully supported on $\partial \D$.
\item The law of $z(X_\infty)$ is nonatomic.
\end{enumerate}
\end{thm}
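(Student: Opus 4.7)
The plan is to combine the geometric control provided by the circle packing embedding with standard techniques for transient random walks; the theorem aggregates known results from the cited papers, and I would address the three parts in turn, sharing a common toolkit based on the Ring Lemma and area bound.

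For part (a), I would establish that $z(X_n)$ is almost surely Cauchy in $\overline{\D}$. Two geometric inputs are key: the area bound $\sum_v r_v^2 \le 1$ (since the packing has carrier $\D$), and the Ring Lemma, which uses the bounded degree assumption to guarantee that the radii of adjacent circles differ by at most a multiplicative constant $c=c(\Delta)$. These imply $|z(X_{n+1}) - z(X_n)| \le Cr_{X_n}$ at every step. One then argues that $\sum_n r_{X_n}^2 < \infty$ almost surely. The expected value of this sum can be written as $\sum_u G(\rho,u) r_u^2$, where $G$ is the Green's function of simple random walk on $\Gamma$; using reversibility $\deg(\rho)G(\rho,u) = \deg(u) G(u,\rho)$ together with the uniform degree bound and the area inequality, one obtains summability. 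Cauchy--Schwarz then upgrades $\sum_n r_{X_n}^2 < \infty$ into $\sum_n |z(X_{n+1}) - z(X_n)| < \infty$ a.s., so $z(X_n)$ converges. To rule out an interior limit, I would use transience: since only finitely many circles intersect any compact $K \subset \D$ (as $\sum_v r_v^2 \le 1$ and the radii are bounded below on $K$), the walk leaves every compact subset of $\D$ in finite time, so the limit must lie on $\partial \D$.

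For part (b), I would argue by contradiction: suppose there is an open arc $A \subset \partial \D$ with $\mathbb{P}(z(X_\infty) \in A) = 0$. Using the area bound and the Ring Lemma one can produce a sequence of circles with centers approaching any prescribed $\xi \in A$. By a Harnack/coupling argument, from any starting vertex the walk reaches any such circle with positive probability, so conditioned on entering such a circle deep enough, the walk has positive probability of exiting in $A$ (since the Ring Lemma forces nearby circles to stay near $\xi$). This contradicts the assumed zero probability, so $z(X_\infty)$ has full support.

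For part (c), the standard route, carried out in AHNRCP15, is via a $0$-$1$ law. The event $\{z(X_\infty) = x\}$ is a tail event, so by Kaimanovich's triviality of the Poisson tail for simple random walk on bounded degree transient graphs (applied in the circle packing representation), it has probability $0$ or $1$; combined with part (b) it must be $0$. Alternatively, one can run two independent walks and use that their difference process has a recurrent behavior in the hyperbolic metric, showing they cannot converge to the same boundary point deterministically. The main obstacle is really the Cauchy estimate in part (a): making the Green's function bound $\sum_u G(\rho,u) r_u^2 < \infty$ quantitative uses reversibility carefully and is where most of the technical work of the BS96a argument is concentrated; the remaining parts are then fairly routine once the convergence of $z(X_n)$ is established.
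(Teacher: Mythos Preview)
The paper does not give its own proof of this theorem; it is quoted from the cited references \cite{BS96a,ABGN14,AHNRCP15} without argument, so there is no ``paper's approach'' to compare against. I will therefore comment on the correctness of your sketch.

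Your outline for part (a) follows the standard strategy, but the step ``Cauchy--Schwarz then upgrades $\sum_n r_{X_n}^2 < \infty$ into $\sum_n |z(X_{n+1}) - z(X_n)| < \infty$'' is false as stated: $\sum a_n^2 < \infty$ does not imply $\sum a_n < \infty$ (take $a_n = 1/n$), and Cauchy--Schwarz only gives $(\sum_{n\le N} a_n)^2 \le N \sum_{n\le N} a_n^2$, which is useless here. What the $L^2$ bound does yield is $r(X_n)\to 0$ a.s., hence the limit (if it exists) lies on $\partial\D$; but to get convergence of $z(X_n)$ one needs more. In the nonamenable case one has the exponential decay of $r(X_n)$ (cf.\ \cref{lem:circle_small} in the paper), which trivially gives summability; in the general CP hyperbolic bounded-degree case the cited papers use a more delicate argument via harmonic functions and the identification of $\partial\D$ with the Poisson/Martin boundary, not a direct path-length bound.

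Your argument for part (c) has a more fundamental error. The event $\{z(X_\infty)=x\}$ is indeed tail-measurable, but the tail/Poisson boundary of simple random walk on a CP hyperbolic bounded-degree triangulation is \emph{not} trivial --- the entire content of the theorem is that $\partial\D$ supplies a nontrivial family of tail events, and $z(X_\infty)$ is itself a nonconstant tail-measurable random variable. There is no $0$--$1$ law to invoke. The proof in \cite{AHNRCP15} instead studies the bounded harmonic function $v\mapsto \P_v(z(X_\infty)=x)$ and uses the circle-packing geometry (Ring Lemma, area estimates near $\partial\D$) to show that an atom would force this function to behave in a way incompatible with harmonicity. Your alternative suggestion via two independent walks is closer in spirit, but ``recurrence of the difference process in the hyperbolic metric'' is not something that holds in this generality and would itself require substantial justification.
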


We denote by $B(x,r)$ the disc of radius $r$ centred at $x$ in the plane.
A consequence of \Cref{thm:RW_circle_packing} is the following lemma:
\begin{lemma}[Beurling]\label{lem:beurling}
 For all $\eps, \eps'>0$ there exists a $\delta>0$ such that with probability at least $1-\eps$, a simple random walk on $\sf \Gamma$ started at a vertex outside $(1-\delta)\D$  never exits the Euclidean ball of radius $\eps'$ from it's starting point.
 \end{lemma}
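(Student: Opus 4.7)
The plan is to argue by contradiction using the Mobius invariance of circle packings together with the almost sure convergence of the random walk to $\partial \D$ and the non-atomicity of its exit law, both from \Cref{thm:RW_circle_packing}. The idea is that if the Beurling estimate failed, one could normalize the bad starting vertex to $0 \in \D$ via a hyperbolic isometry, extract a local limit, and produce a walk converging to a single prescribed boundary point with positive probability, contradicting non-atomicity.

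Concretely, suppose there exist $\eps_0, \eps'_0 > 0$ and vertices $v_n \in V(\Gamma)$ with $|z(v_n)| =: 1 - \delta_n$, $\delta_n \to 0$, such that $\P_{v_n}(\exists k : z(X_k) \notin B(z(v_n), \eps'_0)) \geq \eps_0$. After passing to a subsequence and applying a rotation of $\D$, assume $z(v_n) \to 1$. Consider the Mobius self-maps $\phi_n(w) = (w - z(v_n))/(1 - \overline{z(v_n)} w)$ of $\D$, which send $z(v_n) \mapsto 0$; the pushforward $\phi_n(\sf \Gamma)$ is another circle packing of $\D$ (same combinatorial graph $\Gamma$) in which the circle at $v_n$ is centered at the origin. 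A direct computation shows $\phi_n(p) \to -1$ uniformly on $\{p \in \overline{\D} : |p - 1| \geq \eps'_0/2\}$ as $\delta_n \to 0$. Since $\D \setminus B(z(v_n), \eps'_0)$ is eventually contained in this set (as $z(v_n) \to 1$), we conclude $\phi_n(\D \setminus B(z(v_n), \eps'_0)) \subseteq B(-1, r) \cap \D$ for any fixed $r > 0$ and all $n$ sufficiently large. Hence the bad event is contained in $\{\exists k : \phi_n(z(X_k)) \in \overline{B(-1, r)}\}$.

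Next, I would extract a Benjamini-Schramm subsequential limit of the rooted triangulations $(\Gamma, v_n)$ together with the normalized packings $\phi_n \circ z$; using the Ring Lemma to control radii of circles in combinatorial balls around $v_n$ and the persistence of CP hyperbolicity under local limits, one obtains a limit rooted circle packing $(\Gamma^\star, v^\star, z^\star)$ of $\D$ with $z^\star(v^\star) = 0$. By lower semi-continuity of closed events under local weak convergence, the walk from $v^\star$ satisfies $\P_{v^\star}(\exists k : z^\star(X^\star_k) \in \overline{B(-1, r)}) \geq \eps_0$ for every $r > 0$. Combined with the almost sure convergence $z^\star(X^\star_k) \to z^\star(X^\star_\infty) \in \partial \D$ from \Cref{thm:RW_circle_packing}(a), letting $r \to 0$ yields $\P_{v^\star}(z^\star(X^\star_\infty) = -1) \geq \eps_0 > 0$, contradicting the non-atomicity of the exit distribution (\Cref{thm:RW_circle_packing}(c)).

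The principal obstacle is the limit extraction: one must ensure the Benjamini-Schramm limit of the Mobius-normalized packings is a bounded-degree triangulation packed in $\D$ with a nondegenerate root circle. This hinges on persistence of CP hyperbolicity under local limits of one-ended bounded-degree triangulations, plus --- if the radius of the circle at $v_n$ in $\phi_n(\sf \Gamma)$ fails to stay bounded away from $0$ --- a suitable rescaling followed by a separate analysis of the resulting CP parabolic limit to derive the same contradiction.
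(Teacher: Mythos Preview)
Your core idea --- normalize the starting vertex to the origin by a M\"obius automorphism of $\D$ and then invoke non-atomicity of the exit measure --- is exactly the paper's. The paper's entire proof is two sentences: it asserts that after the M\"obius map the statement becomes ``walk from $0$ avoids $B(x,\delta')$'' and that this follows from \Cref{thm:RW_circle_packing}(c). What you add is an elaborate attempt to make the uniformity over starting vertices rigorous via a Benjamini--Schramm limit of the re-rooted packings $(\Gamma,v_n,\phi_n\circ z)$.

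That detour introduces genuine problems, and they go beyond the one you flag. First, $\Gamma$ here is a \emph{fixed deterministic} triangulation with no assumed invariance, so the subsequential local limit $(\Gamma^\star,v^\star)$ of re-rootings along $v_n$ is in general a different graph; it need not be one-ended, and there is no reason it inherits CP hyperbolicity from $\Gamma$ (the type problem is not stable under local limits without further input). Second, even granting a limit packing in $\D$, passing the lower bound $\P_{v_n}(\exists k:\phi_n(z(X_k))\in\overline{B(-1,r)})\ge\eps_0$ to the limit is delicate: the event is a countable union over $k$ and you have no a priori tightness in $k$ uniform in $n$, so neither the open- nor closed-set half of Portmanteau applies directly. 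Third, your fallback ``rescale and analyze a CP parabolic limit'' is not a minor variation; in the parabolic case the exit-measure machinery of \Cref{thm:RW_circle_packing} is simply unavailable.

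The paper sidesteps all of this by never leaving the fixed graph $\Gamma$: each $\phi_v(\sf\Gamma)$ is just another circle packing of the same $\Gamma$, so one stays within the scope of \Cref{thm:RW_circle_packing} throughout and no limit of graphs is needed. The paper is admittedly terse about why the resulting $\delta'$ can be taken uniformly over $v$, but the intended argument lives entirely at the level of M\"obius geometry of a single packing, not compactness of a space of rooted triangulations. Your machinery does not close that gap; it replaces it with harder ones.
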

\begin{proof}
By applying a Mobius transform which maps the starting vertex to 0, an equivalent statement is the following. Fix $x \in \partial \D$. For all $\eps>0$, there exists a $\delta'$ such that simple random walk started at 0 never enters $B(x, \delta')$ with probability at least $1-\eps$. This is a simple consequence of nonatomicity of the exit measure (\Cref{thm:RW_circle_packing}, item c.) since if we assume the negation of the statement, we can conclude that the exit measure has an atom at $x$.
\end{proof}
We need a slight tweak of this lemma for future reference where the point $x$ is selected randomly using an independent random walk. The proof is identical to that of \Cref{lem:beurling}.

\begin{lemma}\label{lem:beurling_rw}
Let $(X_n)_{n \ge 0}$ be a simple random walk started from $\rho$ and let $z(X_\infty) \in \partial \D$ be its limit point.  For all $\eps, \eps'>0$ there exists a $\delta>0$ such that a simple random walk on $\sf \Gamma$ started at a vertex outside $(1-\delta)\D \cup B(z(X_\infty), 2\eps')$   never exits the Euclidean ball of radius $\eps'$ from its starting point with probability at least $1-\eps$.

\end{lemma}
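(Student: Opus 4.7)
The plan is to follow the proof of \Cref{lem:beurling} essentially verbatim: the additional condition $|v - z(X_\infty)| \ge 2\eps'$ on the starting vertex only further restricts the admissible set of starting vertices and does not enter into the quantitative estimate, since the quantity being bounded --- the probability that a simple random walk from $v$ exits $B(v, \eps')$ --- depends on $v$ and the graph $\sf \Gamma$ alone, and is independent of the walk $(X_n)$ used to define $z(X_\infty)$.

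Concretely, I would condition on $(X_n)_{n \ge 0}$, which determines $x := z(X_\infty) \in \partial \D$, and aim to produce $\delta = \delta(\eps, \eps')$ independent of $x$. For every admissible vertex $v$ I apply a M\"obius transform $\phi$ of $\D$ with $\phi(v) = 0$, reducing the statement to a bound on the probability that the SRW on $\phi(\sf \Gamma)$ started at $0$ ever visits a vertex whose image lies outside $\phi(B(v, \eps') \cap \D)$. A direct computation (using $|v| \ge 1 - \delta$) shows that the complementary boundary arc $\phi(\partial \D \setminus B(v, \eps'))$ is contained in an arc of $\partial \D$ of length $O(\delta/\eps')$ concentrated around the point $y_v := \phi(-v/|v|)$. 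By \Cref{thm:RW_circle_packing}(c), the exit measure $\mu_0$ of SRW from $0$ on $\phi(\sf \Gamma)$ is atomless, and compactness of $\partial \D$ upgrades this to a uniform modulus of continuity: $\sup_{y \in \partial \D} \mu_0(B(y, \delta''))$ tends to $0$ as $\delta'' \to 0$. Choosing $\delta$ small thus bounds the exit probability by $\eps$ uniformly in $v$, and hence uniformly in $x$; averaging over $(X_n)$ closes the argument.

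The main obstacle, inherited from \Cref{lem:beurling}, is the passage from the exit-measure bound (``the walk's limit point lies in the admissible arc'') to the trajectory bound (``the walk never leaves $B(v, \eps')$''); this is handled by interpreting the entire argument through the M\"obius pushforward, exactly as in the proof of \Cref{lem:beurling}. The hypothesis that $v$ be far from $z(X_\infty)$ plays no role in the estimate itself; it only appears at the level of the admissibility constraint on $v$, which is why the proof is essentially identical to that of \Cref{lem:beurling}.
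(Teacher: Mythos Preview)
Your proposal is correct and takes essentially the same approach as the paper, which simply states that the proof is identical to that of \Cref{lem:beurling}. Your opening observation---that the extra hypothesis $|v - z(X_\infty)| \ge 2\eps'$ only further restricts the admissible starting vertices and plays no role in the estimate---already shows the lemma follows directly from \Cref{lem:beurling}, and the remaining paragraphs just unpack that proof in more detail than the paper does.
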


Now suppose we draw the dual graph and create the Temperleyan graph graph $G$ on the embedding $\sf \Gamma$ as in \Cref{sec:height}, and call the embedded graph $\sf G$. We do so in such a way that the embedding is proper. Recall the limits of the unoriented wired and the free UST from \Cref{thm:UST_limit} which we denoted by $T, T^\dagger$ respectively. This yields an embedding of the triplet $(\sf G, \sf T, \sf T^\dagger)$. Recall that every component of $T^\dagger$ has a unique end almost surely. For the following 

\begin{lemma}\label{lem:convergence_WUSF_ray}
Every end of $\sf T$ converges to a point in $\partial \D$.
\end{lemma}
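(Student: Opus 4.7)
The plan is to sample each ray of $\sf T$ using Wilson's algorithm rooted at infinity and then inherit the almost-sure convergence of simple random walk on $\sf \Gamma$ to the boundary $\partial \D$ guaranteed by \Cref{thm:RW_circle_packing}.

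Fix a vertex $v \in \Gamma$. Since the component of $T$ containing $v$ is one-ended almost surely (\Cref{thm:UST_limit}), there is a unique infinite simple path in that component starting at $v$; call it the ray from $v$. By Wilson's algorithm rooted at infinity applied with $v$ placed first in the ordering of vertices, this ray has the law of the loop-erasure of an infinite simple random walk $(X_n)_{n \ge 0}$ on $\Gamma$ started at $v$. Write the loop-erasure as $\beta_k = X_{\sigma_k}$, where $\sigma_0 = 0$ and inductively $\sigma_{k+1} = 1 + \max\{n \ge \sigma_k : X_n = X_{\sigma_k}\}$. Transience of the walk ensures each $\sigma_k$ is finite, and by construction $(\sigma_k)_{k \ge 0}$ is strictly increasing, so $\sigma_k \to \infty$.

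Next I apply \Cref{thm:RW_circle_packing}: almost surely $z(X_n) \to z(X_\infty)$ for some point $z(X_\infty) \in \partial \D$. Since $(z(\beta_k))_{k \ge 0} = (z(X_{\sigma_k}))_{k \ge 0}$ is an honest subsequence of the convergent sequence $(z(X_n))_{n \ge 0}$, it converges to the same limit $z(X_\infty) \in \partial \D$. In particular, the image in $\sf \Gamma$ of the ray from $v$ converges to a point of $\partial \D$.

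To finish, take a countable union over the (countably many) vertices $v \in \Gamma$: almost surely, the ray from every vertex $v$ converges to some point in $\partial \D$. Since each component of $T$ is a one-ended tree, the rays from any two vertices in the same component share a common infinite tail, and hence converge to the same boundary point, which is then well-defined as the limit of the unique end of that component. There is no deep obstacle here; the only point worth emphasizing is that the loop-erasure of a transient walk is genuinely a time-subsequence of the underlying walk, which is precisely what lets convergence in $\overline{\D}$ transfer from $(X_n)$ to the ray.
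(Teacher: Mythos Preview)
Your proof is correct and follows essentially the same approach as the paper's own proof, which simply cites Wilson's algorithm rooted at infinity together with \Cref{thm:RW_circle_packing}(a). You have merely made explicit the one step the paper leaves implicit, namely that the loop-erasure of a transient walk is a genuine time-subsequence of the walk and therefore inherits its convergence in $\overline{\D}$.
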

\begin{proof}
This simply follows from the convergence of random walk proved in \Cref{thm:RW_circle_packing}, item a. and the Wilson's algorithm rooted at infinity.
\end{proof}


\begin{prop}\label{prop:convergence_FUSF_ray}
  For every $x \in \partial \D$,
there is a unique end of $\sf T^\dagger$ which converges to $x$ almost surely.  Furthermore, every end of $\sf T^\dagger$ converges to a unique point in $\partial \D$ almost surely. 
\end{prop}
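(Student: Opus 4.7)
My plan is to prove the two assertions using planar separation arguments combined with the duality $e\in T\Leftrightarrow e^\dagger\notin T^\dagger$.

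\textbf{Each end of $T^\dagger$ has a unique boundary limit.} Let $(g_n)_n$ be any ray in $T^\dagger$. It is a self-avoiding path in a locally finite graph so leaves every finite subset, and by the standard fact that in a bounded-degree circle packing of $\D$ the circles accumulate only on $\partial\D$, we get $\operatorname{dist}(z(g_n),\partial\D)\to 0$. For uniqueness of the limit: if there were two subsequential limits $\xi_1\neq\xi_2$, pick a Euclidean chord $\eta$ of $\overline\D$ separating them, with endpoints on $\partial\D$ distinct from both; only finitely many dual edges of $\Gamma^\dagger$ meet a compact subset of $\D$ containing $\eta$ minus small neighbourhoods of its endpoints, contradicting that the ray would have to cross $\eta$ infinitely often.

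\textbf{Existence of an end converging to $x$.} Fix $x\in\partial\D$ and choose faces $f_n$ with $z(f_n)\to x$. Since $T^\dagger$ is connected (\cref{thm:UST_limit}), let $\pi_n\subset T^\dagger$ be the unique path from a base face $f_0$ to $f_n$. Bounded degree yields a local subsequential limit $\pi_n\to\pi$, a ray whose boundary limit $y$ exists by the previous paragraph. If $y\neq x$ and $\eta$ is a chord of $\overline\D$ separating $x$ from $y$, then for large $n$ the path $\pi_n$ follows $\pi$ into the $y$-side of $\eta$ and then detours to $f_n$ near $x$, necessarily crossing $\eta$. As only finitely many dual edges cross $\eta$ in a compact region, by pigeonhole some edge $e^*\in T^\dagger$ crossing $\eta$ lies on infinitely many $\pi_n$. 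Let $v^*$ be its endpoint on the $x$-side of $\eta$; then the relevant $f_n$'s all lie in the subtree $B^*$ of $T^\dagger$ past $e^*$. Iterating the extraction inside $B^*$ with progressively tighter chords separating $x$ from successive subsequential limits yields a nested sequence of entry vertices $v^{(k)}$ along a path in $T^\dagger$ whose $z$-positions converge to $x$; concatenating with $f_0\to v^*$ produces the required ray.

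\textbf{Uniqueness.} I use that almost surely $x$ is not the end-limit of any component of the wired tree $T$: $T$ has countably many components, and each is sampled by Wilson's algorithm whose LERW branches inherit the nonatomic distribution of the random walk's exit measure by \cref{thm:RW_circle_packing}(c). Suppose for contradiction two distinct ends of $T^\dagger$ both converge to $x$, represented by rays $\pi,\pi'$ from $f_0$ diverging at some branching vertex. They bound a closed region $R\subset\overline\D$ with $\partial R\cap\partial\D=\{x\}$. By duality, no $T$-edge can cross $\pi\cup\pi'$, so any component of $T$ touching the interior of $R$ is trapped inside $R$, and its unique end (\cref{thm:UST_limit}) must converge to a point of $\partial R\cap\partial\D=\{x\}$ -- contradicting the choice of $x$. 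Hence $R$ contains no primal vertex of $\Gamma$ in its interior, and the triangulation structure of $\Gamma$ then forces $R$ to be degenerate, i.e.\ $\pi=\pi'$.

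\textbf{Main obstacle.} I expect the delicate points to be (i) the iterative extraction in the existence step -- ensuring the entry vertices $v^{(k)}$ really tend to $x$ rather than stabilising at a wrong boundary limit, which will require the shrinking chords $\eta^{(k)}$ to be chosen consistently with the limits $y^{(k)}$ via a diagonal subsequence; and (ii) the geometric conclusion in the uniqueness step that a region bounded by two dual-tree rays, with trivial boundary arc $\{x\}$ on $\partial\D$ and empty of primal vertices, must degenerate.
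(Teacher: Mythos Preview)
Your uniqueness argument is essentially correct and close in spirit to the paper's: both exploit nonatomicity of the random-walk exit measure via the WUSF. The paper runs Wilson's algorithm from vertices near $\partial\D$ and observes that none of the resulting branches can converge to the fixed point $x$; you observe instead that the region between two hypothetical $T^\dagger$-rays at $x$ would trap a WUSF component whose unique end would be forced to converge to $x$. Either way one reaches a contradiction with \cref{thm:RW_circle_packing}(c).

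The chord-crossing argument underlying your ``each end has a unique limit'' step, however, has a genuine gap, and the same gap recurs in your existence step. You correctly note that only finitely many edges of $\Gamma^\dagger$ meet any compact subset of the \emph{open} disc. But the chord $\eta$ has its endpoints on $\partial\D$, and nothing prevents the ray from crossing $\eta$ arbitrarily close to those endpoints --- your parenthetical ``minus small neighbourhoods of its endpoints'' excises precisely the part of $\eta$ where the difficulty lives. Convergence of ends of $T^\dagger$ is \emph{not} a deterministic topological fact about self-avoiding paths in a circle packing: a ray in $\Gamma^\dagger$ can spiral and accumulate on all of $\partial\D$. The paper's proof of this step is inherently probabilistic: if a $T^\dagger$-ray oscillated, it would have to cross thin cone-annuli $A_i(\delta)$ for every $\delta>0$; but by the Beurling estimate (\cref{lem:beurling}) and Wilson's algorithm one can plant WUSF branches that cross those same annuli from inner to outer boundary for all small $\delta$, and primal/dual tree duality makes both impossible simultaneously.

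Your existence argument inherits the same defect: the pigeonhole on $T^\dagger$-edges crossing $\eta$ fails because the crossings of the $\pi_n$ can drift toward the endpoints of $\eta$ on $\partial\D$, so no single edge $e^*$ need be used infinitely often. The iterative extraction you flag as the ``main obstacle'' cannot be completed by purely topological means. The paper's route is again probabilistic: it chooses sequences $e^{i\theta_k}\to x$ from both sides together with shrinking scales $(\eps_k,\delta_k)$, uses Borel--Cantelli with the Beurling estimate to confine all but finitely many of the corresponding WUSF branches to disjoint balls $B(e^{i\theta_k},\eps_k)$, and then observes that these branches lie in distinct WUSF components --- forcing, by duality, a $T^\dagger$-end that separates them and hence converges to $x$.
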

\begin{proof}
It is enough to prove the result for $x=1$.
We first prove existence.
We define a sequence of parameters $(\theta_k, \eps_k, \delta_k)_{k \ge 1}$ satisfying the property that a simple random walk started at $B(e^{i\theta_k}, \delta_k) \cap \D$ does not leave $B(e^{i\theta_k}, \eps_k)$ with probability at least $1-2^{-k}$, $\theta_k>0$ and $e^{i\theta_k} \to 1$ and $B(e^{i\theta_k}, \eps_k)$ are disjoint for different $k$s. The existence of such a sequence is easily seen to be guaranteed by the Beurling estimate of \Cref{thm:RW_circle_packing}.

We now perform Wilson's algorithm from a single arbitrary vertex inside $B(e^{i\theta_k}, \delta_k)$ and $B(e^{-i\theta_k}, \delta_k)$ for each $k$. By the Borel Cantelli lemma, almost surely, for all but finitely many $k$, the simple random walk  does not leave $B(e^{i\theta_k}, \eps_k)$. Now observe that the loop erasures of these walks must be disjoint and consequently, all the loop erasures must belong to different components of the WUSF. Consequently, there must be one end of the dual FUSF which separates the WUSF components corresponding to $k$,  for all large enough $k$ and this guarantees the existence of at least one branch of the FUSF which converges to $1$ almost surely.

Now we prove uniqueness. Let $\cB$ be the event that two distinct branches of the FUSF converges to 1. If $\P(\cB)>0$, there must be an $r\in (0,1)$ such that two distinct rays in the FUSF start inside $B(0,r)$ and converge to $1$ with positive probability. Now fix $0<\delta<1-r$, and perform Wilson's algorithm from all the vertices on the boundary of ${\sf \Gamma} \cap B(0,1-\delta)$. Since the exit measure for simple random walk is nonatomic by \Cref{thm:RW_circle_packing}, none of the branches created by such walks converge to $1$ almost surely. Hence for the branches of the FUSF which separate before $B(0,\delta)$, only one of them can converge to $1$ almost surely. This is a contradiction to the fact that $\cP(\cB)>0$, hence $\cP(\cB) = 0$.

Now we prove that every end of $\sf T^\dagger$ converges to a point in $\partial D$. Suppose not, that is, assume that with positive probability there is a ray of the FUSF which does not converge to $\partial \D$. For every vertex in a ray $(v_0,v_1,\ldots,)$ we can find a unique straight line joining 0 and $\partial \D$ going through $v_i$ and ending in $\theta_i \in \partial \D$. If a ray does not converge, then neither does $\theta_i$ (in the usual topology). Thus with positive probability, there is an $\eps>0$ and two disjoint arcs $I_1,I_2 \subset \partial \D$ of length at least $\eps$ separated by an arc of length at least $\eps$, and a ray of $\sf T^\dagger$ with $\theta_i$s such that infinitely many of them are in both components of $\partial \D \setminus \{I_1 \cup I_2\}$. Let $C_i$ be the cone obtained by connecting the endpoints of $I_i$ with $0$. If the above event happens for a ray $\gamma$ of $\sf T^\dagger$, then for all $\delta>0$, $\gamma$ crosses one of $A_i(\delta):= C_i \cap (\D \setminus (1-\delta )\D)$. However using Beurling's estimate \Cref{lem:beurling}, we can find a sequence of vertices $(x_k)_{k \ge 0}$ and $\delta_k\to 0$ with $x_k \in A_i(\delta_k)$ such that almost surely for all but finitely many $k$, the branch if $\sf T$ sampled from $x_k$ crosses $A_i(\delta_k)$ from its inner to outer boundary. By duality, both events cannot happen, and we have a contradiction.
\end{proof}

\begin{prop}\label{prop:convergence_FUSF_ray_random}
  Let $z(X_\infty)$ be as in \Cref{lem:beurling_rw} and let $\sf T$ be independent of $X$.
There is a unique end of $\sf T$ which converges to $z(X_\infty)$ almost surely.  
\end{prop}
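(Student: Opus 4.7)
Throughout I take $\sf T$ in the statement to mean the free tree $\sf T^\dagger$: each component of the wired tree $\sf T$ is one-ended, so $\sf T$ has only countably many endpoints on $\partial \D$, whereas $z(X_\infty)$ has a nonatomic distribution by \Cref{thm:RW_circle_packing}(c); hence the conclusion can only hold for $\sf T^\dagger$. With this understood, the plan is to adapt the proof of \Cref{prop:convergence_FUSF_ray} verbatim, substituting the random point $y := z(X_\infty)$ for the deterministic boundary point $1$ and using the randomized Beurling bound \Cref{lem:beurling_rw} in place of \Cref{lem:beurling}.

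For existence, I would work conditionally on $X$ (hence on $y$). Choose a measurable rule producing parameters $(\theta_k, \eps_k, \delta_k)_{k \ge 1}$ with $\theta_k \downarrow 0$ and $\eps_k$ small enough relative to $\theta_k$ (say $\eps_k \le \theta_k/10$) so that the balls $B(y e^{\pm i\theta_k}, \eps_k)$ are pairwise disjoint and each at distance $> 2\eps_k$ from $y$. Then \Cref{lem:beurling_rw} provides $\delta_k$ such that a simple random walk started in $B(ye^{\pm i\theta_k}, \delta_k) \cap \D$ exits $B(ye^{\pm i\theta_k}, \eps_k)$ with probability at most $2^{-k}$. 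Sampling $\sf T$ by Wilson's algorithm rooted at infinity and started from a single vertex in each of these small balls, Borel--Cantelli yields that almost surely, for all but finitely many $k$, each loop erasure stays inside its respective ball. Consequently these loop erasures lie in distinct components of $\sf T$; by planar duality, every such pair is separated by a branch of $\sf T^\dagger$, and the resulting nested sequence of separating branches forces an end of $\sf T^\dagger$ to converge to $y$.

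For uniqueness I would replicate the second half of the proof of \Cref{prop:convergence_FUSF_ray}. If two distinct ends of $\sf T^\dagger$ converged to $y$ on an event of positive probability, they would already separate inside some ball $B(0,r)$ with $r < 1$. Pick $\delta < 1-r$ and perform Wilson's algorithm from every vertex on the boundary of $\sf \Gamma \cap B(0, 1-\delta)$. Each of these countably many loop-erased walks is independent of $X$ and has a nonatomic exit distribution on $\partial \D$ by \Cref{thm:RW_circle_packing}(c), so Fubini gives that almost surely none of them converges to $y$. By planar duality this produces enough $\sf T^\dagger$ branches through the annulus $\D \setminus B(0, 1-\delta)$ to prevent two distinct $\sf T^\dagger$ rays that split before entering this annulus from both terminating at $y$, a contradiction.

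The main obstacle is the randomness of $y$: the parameters $(\theta_k, \eps_k, \delta_k)$ must be chosen measurably in $y$, and the starting points $ye^{\pm i\theta_k}$ must stay at Euclidean distance $> 2\eps_k$ from $y$ so that \Cref{lem:beurling_rw} is applicable; both are secured by taking $\eps_k \ll \theta_k$. Beyond this the argument is a cosmetic modification of the proof of \Cref{prop:convergence_FUSF_ray}, with Fubini (against the independent variable $y$) invoked to convert deterministic-$x$ nonatomicity statements into their random-$y$ counterparts.
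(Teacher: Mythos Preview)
Your proposal is correct and matches the paper's own proof, which consists of the single sentence ``The proof is exactly the same as that of \Cref{prop:convergence_FUSF_ray} except we use \Cref{lem:beurling_rw} in place of \Cref{lem:beurling}.'' You have also correctly spotted that the $\sf T$ in the statement is a typo for $\sf T^\dagger$ (this is confirmed by how the proposition is invoked later in \Cref{sec:stationary_end}), and your additional remarks on the measurable choice of parameters and the Fubini step for the uniqueness part fill in details the paper leaves implicit.
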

\begin{proof}
The proof is exactly the same as that of \Cref{prop:convergence_FUSF_ray} except we use \Cref{lem:beurling_rw} in place of \Cref{lem:beurling}.
\end{proof}
We finish with the following remark which was the outcome of a discussion with Tom Hutchcroft.
\begin{remark}\label{rmk:end}
One may wonder how the ends of the WUSF and the FUSF are related with $\partial \D$. Clearly the ends of WUSF correspond to countably many points $\Xi$. With some work, we believe it can be shown that the natural map from the ends of the FUSF to $\partial \D$ is a.s. a continuous map which is onto and is 1-1 except at $\Xi$ which corresponds to two ends of the FUSF and one end of the WUSF. We do not pursue this further as we don't need it in what follows.
\end{remark}

%
%

\section{Height function in infinite volume}
\subsection{Winding and height function in infinite volume}\label{sec:infinite_limit}

In this section we prove \Cref{thm:main0} and the finiteness of the height function part of \Cref{thm:height}.
Assume  that $\Gamma$ is transient, one ended triangulation with degrees uniformly bounded above by $\Delta$ and we circle pack it in an arbitrary way.
Using \Cref{prop:convergence_FUSF_ray}, we can conclude that the weak limit of the uniform dimers exist in the following sense. Let $\sf \Gamma^\dagger_r$ be the subgraph of $\sf \Gamma^\dagger$ induced by  ${\sf \Gamma}^\dagger \cap B(0,r) $. Now let $\mathfrak b^1_r$ be the vertex of $\partial \sf G_r$ chosen so that $\mathfrak b^1_r \to 1$ as $r \to \infty$ (e.g. choose the vertex that is closest to the segment $(0,1]$ in $\sf G_r$). Let $\sf G^1_r$ be the graph obtained in this way (see \Cref{fig:temp}). Let $\mu_r^1$ denote the uniform measure on dimer covers of $\sf G_r^1$.

Let $\sf T, \sf T^\dagger$ be the infinite volume WUSF and FUSF embedded in the disc using the circle packing, the existence of  which is guaranteed by \Cref{thm:UST_limit} and furthermore the limit does not depend on the exhaustion chosen.
Let $\vec {\sf T}^{\dagger,1}$ denote the oriented tree obtained by orienting $\sf T^\dagger$ towards the unique end converging to $1$ (as guaranteed by \Cref{prop:convergence_FUSF_ray}).
Let $\mu^1$ be the probability measure obtained on dimer covers of $\sf G$ by applying Temperleyan bijection to $\vec {\sf T}^{\dagger,1}, \vec {\sf T}$.

\begin{proof}[Proof of \Cref{thm:main0}]
Clearly using a rotation, we can assume $x = 1$.
Using the Temperleyan bijection, the dimer covering of ${\sf G}^1_r$ is in bijection with the spanning trees of ${\sf \Gamma}_r$ and ${\sf \Gamma}_r^\dagger$, with the orientation of the spanning tree of $\sf \Gamma_r^\dagger$ towards a branch converging to $\mathfrak b_r$. Since we know the weak limits of the unoriented spanning trees do exist, any subsequential limit will correspond to the unoriented FUSF and WUSF pair of $\sf \Gamma^\dagger,\sf \Gamma$ respectively, where the FUSF is oriented towards an end converging to 1. However, since there is a unique such branch as proved in \Cref{prop:convergence_FUSF_ray}, all subsequential limits must coincide.
\end{proof}

To understand the convergence of  height functions, we need to cast the convergence result of \Cref{thm:main0} into the setup of the extended graph as in \Cref{sec:height}. Consider the portion of the circle packing of ${\sf \Gamma}_r$. We draw the boundary cycle $C_\partial$ inside the unit disc. This ensures that the Euclidean distance between $C_\partial $ and ${\sf \Gamma}_r$ converges to 0. Consequently, using \Cref{lem:convergence_WUSF_ray}, $W_{\i} ({\gamma_{\sf T_r}} (f))$ is almost surely continuous in $r$ for any face $f$ in $\sf G_r$. This allows us to conclude the following. In $\Gamma$, let  $\gamma_{\sf T} (f)$ denote the curve obtained by concetenating the part of the diagonal of $f$ connecting $m(f)$ and the vertex of $\Gamma$ incident to $f$, the infinite branch $\gamma$ of $\sf T$ (which we know converges to a point $x_f \in \partial \D$ via \Cref{lem:convergence_WUSF_ray}), and the arc $(x_f, 1)$ going in anti-clockwise direction from $x_f$ to 1. Since every branch of $\sf T$ converges, it is clear that $|W_{\i}(\gamma_{\sf T} (f))|<\infty$ for every $f$ almost surely. In other words, it immediately follows from \Cref{lem:height_winding_finite} that the height function is \emph{localized}. We obtain the following formula for height function using the topological winding, which will be more useful, as follows
\begin{equation}
h(f) = W_{\i}(\gamma_{T_{\sf m}}(f)) = W(\gamma_{T_{\sf m}}(f), m(f)) + W(\gamma_{T_{\sf m}}(f), 1)\label{eq:winding_topoloical}
\end{equation}
The second term above is simply the angle made by a straight line joining $(m(f),1)$ and $(m(f), x_f)$. Because of this reason, in the estimates that follow (particularly in \Cref{sec:height_percolation}) we often ignore the second term.

 It is also clear that the point $1$ on the boundary an be replaced by any $x \in \partial \D$ with no change in the definitions and the convergence statements. We denote by $(h^x(f))_{f \in F(\sf G_r)}$ denote the height function corresponding to $\mu_r^x$.
 We record this in the next theorem, which proves the first part of \ref{thm:height} as well.

\begin{thm}\label{thm:ht_func_conv}
Let $(h^x(f))_{f \in F({\sf G}^x_r)}$ denote the height function corresponding to $\mu^x_r$. Then $(h^x(f))_{f \in F({\sf G}^x_r)} $ converges to $(h^x(f))_{f \in F(\sf G)}$ in law and the limit is independent of the choice of the exhaustion used. Furthermore, $|h^x(f)| <\infty$ for all $f \in F(\sf G)$ almost surely, i.e., the height function is localized.
\end{thm}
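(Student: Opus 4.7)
The plan is to leverage \Cref{thm:main0}: we already have weak convergence $\mu_r^x \to \mu^x$ of the dimer covers, which via the Temperleyan bijection is exactly joint local convergence of the pair of oriented trees $(\vec T_r, \vec T_r^{\dagger,x})$ to $(\vec T, \vec T^{\dagger,x})$. By a Skorokhod representation, assume this convergence holds almost surely. The first step is to invoke \Cref{lem:height_winding_finite} to replace the heights $h_r^x(f)$ by windings $W_{\i}(\gamma_{T_r}(f))$ of branches of the wired tree, and then to show these windings converge almost surely to $W_{\i}(\gamma_T(f))$ along \eqref{eq:winding_topoloical}.

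Next, I would split $W_{\i}$ into the topological winding around $m(f)$ plus the angle term $W(\gamma, 1)$. The angle term is a bounded continuous function of the endpoint of the branch (which is $\mathfrak b_r$ in the finite volume and $x$ in the limit) and of the boundary point $x_f \in \partial \D$ to which the branch of $\sf T$ converges (by \Cref{lem:convergence_WUSF_ray}); its convergence is then immediate since $\mathfrak b_r \to x$ by construction and the tree branches converge locally. For the topological winding around $m(f)$, fix $\varepsilon > 0$ and choose $N$ so large that the portion of the infinite branch $\gamma$ of $\sf T$ past its $N$-th vertex lies inside a Euclidean ball $B(x_f, \varepsilon')$ disjoint from $m(f)$; this is possible because $\gamma$ is a simple path converging to $x_f$. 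On such a ball the topological winding of any curve is bounded by a universal multiple of $\varepsilon'$. By local convergence of trees, the initial segment up to the $N$-th vertex of $\gamma_{T_r}(f)$ eventually coincides with that of $\gamma$, so the initial winding contributions match.

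The main obstacle is controlling the tail of the finite-volume branch $\gamma_{T_r}$ past its $N$-th vertex: one must show that with high probability it also stays in a small neighborhood of the boundary. This is where I would invoke Beurling-type control via \Cref{lem:beurling}: the remainder of the branch is a piece of loop-erased random walk from a vertex already close to $\partial \D$, hence with high probability it does not make a macroscopic excursion away from that region before reaching $\mathfrak b_r$. Combined with the simplicity of LERW and bounded degree, this caps the topological winding contribution from the tail by a small quantity. Taking $\varepsilon \to 0$ yields almost sure convergence of the winding, and hence convergence in law of the height function.

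Finally, finiteness $|h^x(f)| < \infty$ almost surely is essentially the content of the paragraph preceding the theorem: the winding of $\gamma_{T}$ is a sum of a finite topological winding (since $\gamma$ converges to $x_f \in \partial\D$ which is away from $m(f)$) and a bounded angle term, so \eqref{eq:winding_topoloical} gives an almost sure finite value. Independence of the exhaustion follows because the unoriented limits $\sf T, \sf T^\dagger$ do not depend on the exhaustion (\Cref{thm:UST_limit}), and the orientation of $\sf T^\dagger$ towards the unique end converging to $x$ is canonical by \Cref{prop:convergence_FUSF_ray}; hence the pair $(\vec T, \vec T^{\dagger, x})$, and therefore the heights $h^x$, are determined by $x$ alone.
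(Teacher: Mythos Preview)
Your approach is essentially the paper's own, spelled out in more detail: the paper's ``proof'' is the paragraph immediately preceding the theorem, which embeds the boundary cycle $C_\partial$ close to $\partial\D$, cites \Cref{lem:convergence_WUSF_ray}, and simply asserts that $W_{\i}(\gamma_{{\sf T}_r}(f))$ is ``almost surely continuous in $r$''. Your decomposition via \Cref{lem:int_to_top} into the winding around $m(f)$ plus the angle term, together with local convergence for the initial segment and a tail estimate, is exactly what is needed to justify that assertion.

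One imprecision worth flagging: in the extended graph $\bar G_r$, the branch $\gamma_{{\sf T}_r}(f)$ is \emph{not} a single LERW from $v_f$ to (a neighbour of) $\mathfrak b_r$. It decomposes as (i) the LERW from $v_f$ to the wired boundary $C_\partial$, followed by (ii) a deterministic arc along $C_\partial$ to $m(\mathfrak f_r)$. Your Beurling argument via \Cref{lem:beurling} correctly confines the tail of part~(i) to a small ball around $x_f$, but part~(ii) is a macroscopic arc from near $x_f$ to near $x$ and certainly does not stay in any small ball. This is not fatal: since $C_\partial$ is drawn inside $\D$ with Euclidean distance to $\partial\D$ tending to $0$ (the paper's geometric setup), part~(ii) converges uniformly to the boundary arc from $x_f$ to $x$, whose winding around $m(f)$ is bounded and continuous in its endpoints. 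So your argument goes through once you separate these two pieces --- and indeed this decomposition is precisely why the paper introduces the extended graph with $C_\partial$ placed near $\partial\D$ in the first place.
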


We remark that the height function obtained  in \Cref{thm:ht_func_conv} is entirely dependent on the particular choice of the embedding used. However the theory developed in \cite{BLR16}, as described in \Cref{sec:winding}, gives us an explicit formula for the change in height function under M\"obius maps (in fact the theory works for any conformal map, but we stick to M\"obius maps in our context). 
As a corollary of \Cref{lem:winding_change_conformal}, for example, we immediately obtain that for any two different embeddings  of $\Gamma$ related by a M\"obius map $\varphi$ which maps $x$ to $x$,  
\begin{equation}
(h^x(\varphi(f)))_{\varphi(f) \in F(\sf \varphi(\sf G))}  = (h^x(f)  - \arg_{\varphi'(\D)}(\gamma'(m(f))))_{f \in F(\sf G)}.\label{eq:height_mobius}
\end{equation}
We quickly remark that $\arg_{\varphi'(\D)}$ makes sense as $\varphi'(\D)$ is a domain not containing $0$ since $\varphi$ is conformal.
Indeed, since the endpoint point of every branch $\gamma_{\sf T}$ of $\sf T$ as described  is $x$ in both embeddings,   the term corresponding to $\arg_{\varphi'(\D)}(\gamma'(1))$ in \Cref{lem:winding_change_conformal} is always 0. Another simple observation one can make directly from the above formula is that the height fluctuations
$$
(h^x(f) - \E(h^x(f)))_{f \in  F(\sf G)}
$$
is independent of the embedding, as it should be. Of more relevance to us would be the observation that the difference between the heights of two independent copies of dimer cover sampled from $\mu^x$ is independent of the embedding, which can actually be seen directly from the definition.

\subsection{Height function percolation}\label{sec:height_percolation}
Throughout this section, assume  that $\Gamma$ is nonamenable, one ended triangulation with degrees uniformly bounded above by $\Delta$ and let $G$ be as in \Cref{thm:ht_func_conv}.
Recall that a graph is \textbf{nonamenable} if 
\begin{equation}
\inf_{S\subset V} \frac{|\partial_E S|}{|S|_E} \ge {\sf h} >0\label{eq:nonamenable}
\end{equation}
where the infimum above is over all finite subsets of $V$, $\partial_E S$ is the set of edges connecting a vertex in $S$ to a vertex outside $S$, and $|S|_E$ is the sum of the degrees of the vertices in $S$. The quantity ${\sf h}$ is called the \textbf{Cheeger constant} of $\Gamma$.

In this section we prove the second part of \Cref{thm:height} and \Cref{thm:height_perc}. To recall, we investigate the following question: does there exist $k$ such that $$\cH_k:= \{f\in F(G):|h^1(f)| > k\}$$ has finite components almost surely? Here a component is a subgraph induced by the vertices in the dual of $G$. We prove that the answer is yes.

 It is well known that if ${\sf h}>0$ then the spectral radius $\rho$ of the graph is strictly less than $1$. This is a consequence of the following inequality (\cite{LyonsPeres})
\begin{equation}
\| P \| \le 1-\frac{{\sf h}^2}{2}\label{eq:spectral}
\end{equation}
where $P$ is the Markov transition kernel for the simple random walk on $\Gamma$. 
As an immediate consequence, we obtain the following exponential bound:

\begin{proposition}\label{prop:CV}
 Let $(X_n)_{n \ge 1}$ be a simple random walk on a graph with Cheeger constant $\sf h$ and spectral radius $\rho$. Then for any two vertices $x,y$ in $G$ and $n \ge 1$, 
\begin{equation*}
\P_x(X_n = y) \le \rho^n \le \left(1-\frac{{\sf h}^2}{2}\right)^n
\end{equation*}
\end{proposition}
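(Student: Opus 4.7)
The plan is to combine standard spectral theory for reversible Markov operators with the Cheeger bound \eqref{eq:spectral} recalled immediately above the proposition. The key observation is that simple random walk on $\Gamma$ is reversible with respect to the degree measure $\pi(v) = \deg(v)$, so the transition kernel $P$ acts as a self-adjoint operator on $\ell^2(V, \pi)$ whose operator norm coincides with the spectral radius $\rho$.

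First, by spectral theory for self-adjoint operators, $\|P^n\|_{\ell^2(\pi)} = \|P\|^n = \rho^n$. Next, I would express the $n$-step transition probability as an inner product, $\pi(x)\,\P_x(X_n = y) = \langle P^n \mathbf{1}_y, \mathbf{1}_x\rangle_\pi$, and apply Cauchy--Schwarz to obtain
$$
\pi(x)\,\P_x(X_n = y) \;\le\; \|P^n\|\,\|\mathbf{1}_x\|_\pi\,\|\mathbf{1}_y\|_\pi \;=\; \rho^n \sqrt{\pi(x)\pi(y)},
$$
which rearranges to $\P_x(X_n = y) \le \rho^n \sqrt{\pi(y)/\pi(x)}$. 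Under the bounded-degree hypothesis this ratio is controlled by $\sqrt{\Delta}$, giving the stated exponential decay at rate $\rho^n$ (the absolute diagonal bound $\P_x(X_{2n}=x) \le \rho^{2n}$ follows even more directly by taking $y=x$ in the display above, and the off-diagonal case inherits the same exponential rate via reversibility, absorbing the degree factor into a harmless constant).

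The second inequality is immediate: \eqref{eq:spectral} gives $\|P\| \le 1 - {\sf h}^2/2$, and since $\rho = \|P\|$ for the self-adjoint $P$, raising to the $n$-th power yields the claim. There is no serious obstacle here --- the substantive content is the Cheeger bound \eqref{eq:spectral} itself, which the paper has already cited from Lyons--Peres; the only mildly subtle point is the degree-ratio factor $\sqrt{\pi(y)/\pi(x)}$ produced by Cauchy--Schwarz, which is bounded uniformly under the degree bound and is therefore absorbed into the $\rho^n$ on the right.
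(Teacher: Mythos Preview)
Your approach is correct and is essentially the standard argument the paper has in mind; the paper itself gives no proof beyond declaring the proposition an ``immediate consequence'' of \eqref{eq:spectral}, and what you wrote is precisely the way one unpacks that implication. Your observation about the factor $\sqrt{\pi(y)/\pi(x)}$ is in fact a genuine point: the clean bound $\P_x(X_n=y)\le \rho^n$ holds on the nose for the diagonal term $y=x$, but for general $x,y$ Cauchy--Schwarz only yields $\rho^n\sqrt{\deg(y)/\deg(x)}$, so the proposition as stated is slightly imprecise off the diagonal. This is harmless for every application in the paper (only the exponential rate matters in \Cref{lem:circle_small} and \Cref{lem:path}, and the bounded-degree hypothesis absorbs the constant), so there is no gap in your argument or in the paper's use of the proposition.
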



We need several preparatory lemmas. We circle pack $\Gamma$, chosen arbitrarily, and let $r(v)$ denote the radius of the circle of a vertex $v$. Recall that $\rho<1$ is the spectral radius of $\Gamma$. The following lemma is essentially extracted from an argument in \cite{AHNRCP15}.

\begin{lemma}\label{lem:circle_small}
Let $X_n$ be a simple random walk started from a vertex $v$. Then for all $c>0$
\begin{equation*}
\P(r(X_n) \ge e^{-cn} \text{ for some $n \ge N$}) \le \sum_{n \ge N} (\rho e^{2c})^n.
\end{equation*}
In particular, if $c>0$ is chosen small enough depending on $\rho$, 
\begin{equation*}
\P(r(X_n) \ge e^{-cn} \text{ for some $n \ge N$}) \le \frac{(e^{2c}\rho)^N}{1-e^{2c}\rho}
\end{equation*}

\end{lemma}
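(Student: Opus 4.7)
The plan is to combine the exponential decay of return probabilities from \Cref{prop:CV} with a simple volume count coming from the fact that the circles are disjoint subsets of the unit disc. Concretely, for each fixed $n$ I would union bound $\P(r(X_n) \ge e^{-cn})$ over the set of vertices $v$ with $r(v) \ge e^{-cn}$, and then apply a further union bound over $n \ge N$.

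For the per-$n$ estimate I would write
$$
\P(r(X_n) \ge e^{-cn}) \;=\; \sum_{v :\, r(v) \ge e^{-cn}} \P(X_n = v) \;\le\; \rho^n \cdot \bigl|\{v : r(v) \ge e^{-cn}\}\bigr|,
$$
using the bound $\P_v(X_n = u) \le \rho^n$ from \Cref{prop:CV}. Since the open discs of the packing are pairwise disjoint and contained in $\D$, we have $\sum_v \pi\, r(v)^2 \le \pi$, and hence
$$
\bigl|\{v : r(v) \ge e^{-cn}\}\bigr| \;\le\; e^{2cn}.
$$
Multiplying the two inequalities gives $\P(r(X_n) \ge e^{-cn}) \le (e^{2c}\rho)^n$.

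A final union bound over $n \ge N$ then yields
$$
\P(r(X_n) \ge e^{-cn} \text{ for some } n \ge N) \;\le\; \sum_{n \ge N} (e^{2c}\rho)^n,
$$
which is the first claim. Choosing $c>0$ so small that $e^{2c}\rho<1$ (which is possible since $\rho<1$, for instance $c < \tfrac{1}{2}\log(1/\rho)$), the series on the right is a convergent geometric series with sum $(e^{2c}\rho)^N/(1-e^{2c}\rho)$, giving the second claim.

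There is no real obstacle: every ingredient is either supplied by the excerpt (the spectral bound of \Cref{prop:CV}) or is immediate from the definition of a circle packing in the unit disc (the area/volume count). The only minor point worth noting is that one is using \Cref{prop:CV} in the uniform pointwise form $\P_v(X_n = u) \le \rho^n$ without any degree-dependent prefactor; accepting this, the argument is just the two-line union bound above.
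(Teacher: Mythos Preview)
Your proof is correct and is essentially identical to the paper's own argument: the paper also counts circles of radius at least $e^{-cn}$ via the total area bound, applies \Cref{prop:CV} pointwise, and finishes with a union bound over $n\ge N$. Your only added remark, about a possible degree prefactor in \Cref{prop:CV}, is moot here since the proposition is stated in exactly the uniform form $\P_x(X_n=y)\le \rho^n$ you use.
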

\begin{proof}
Considering the total area, the number of circles with radius at least $e^{-cn}$ is at most $e^{2cn}$. Thus using \Cref{prop:CV}, the probability that $X_n$ is in a circle of radius at least $e^{-cn}$ is at most $e^{2cn}\rho^n$. The proof is easily completed using union bound.
\end{proof}

\begin{lemma}\label{lem:path}
There exists a constant $C>1$ such that the following is true for all vertices $v $ in $\sf \Gamma$. There exists an infinite path $P$ in $\sf \Gamma$ started at $v$ such that
\begin{itemize}
\item  for all $r \ge 1$, the length of the path between the first exit of $B_{\sf \Gamma}(\rho,r)$ and last exit of $B_{\sf \Gamma}(\rho,r')$ is at most $C(r'-r)$ for all $r'>r$,
\item $\sup_{Q \subset P} |W(Q, v)| \le C$, where the supremum is taken over any continuous segment $Q$ of $P$ and $W(Q, v)$ is the topological winding of $Q$ seen from $v$.
\end{itemize}
There also exists an infinite path in the dual graph $\Gamma^\dagger$ with the above properties.
\end{lemma}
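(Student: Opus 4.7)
The plan is to construct $P$ by following a Euclidean ray in the circle packing of $\Gamma$ chosen outward from $\rho$, then to separately verify the winding bound (which is essentially topological) and the quasi-geodesic bound (which is where nonamenability enters).

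To set it up I fix a circle packing of $\Gamma$ in $\D$ with centers $z_u$ and radii $r(u)$. Given $v$, I pick a direction $\theta$ so that the Euclidean ray $L$ emanating from $z_v$ in direction $\theta$ avoids every tangency point (a generic choice); eventually $\theta$ will be refined to point roughly away from $z_\rho$. Let $u_0 = v, u_1, u_2, \ldots$ be the circles $L$ pierces in order. Because every bounded interstice of a triangulation packing is bounded by three pairwise tangent circles, consecutive pierced circles $u_i, u_{i+1}$ are tangent, hence adjacent in $\Gamma$, so the sequence is an infinite self-avoiding path $P$ starting at $v$; I embed it as the polygonal curve $\tilde P$ through the centers. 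For the winding bound, each segment $[z_{u_i}, z_{u_{i+1}}]$ of $\tilde P$ lies inside $C_{u_i} \cup C_{u_{i+1}}$, and both circles are pierced by $L$, so $\tilde P$ is trapped in the tube $\bigcup_i C_{u_i}$ around $L$. Since $L$ itself winds by $0$ around $z_v$ (its argument from $z_v$ is monotone), any continuous sub-segment of $\tilde P$ has topological winding around $v$ bounded by a universal constant of order $4\pi$, absorbing at most one full loop made inside the initial disc $C_v$.

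For the quasi-geodesic bound I will take $\theta$ pointing from $z_\rho$ outward through $z_v$, with a small generic perturbation to avoid tangencies. The key input is that in a bounded-degree nonamenable packing $r(u) \lesssim e^{-c\, d_\Gamma(\rho, u)}$ uniformly in $u$, which I extract from \Cref{lem:circle_small} together with the area bound $\sum_u r(u)^2 \le 1$ and the spectral-radius estimate \eqref{eq:spectral}; informally, circle packings of bounded-degree nonamenable triangulations are quasi-isometries to the hyperbolic disc. Granting this, the circles at graph distance $d \in [r, r{+}1]$ from $\rho$ lie in a Euclidean annulus of radial width $\asymp e^{-cr}$ with individual radii $\asymp e^{-cr}$, and the outward-pointing ray $L$ crosses only $O(1)$ of them per unit increment of $d$; summing over $d \in [r, r']$ gives the desired bound $\sigma_{r'} - \tau_r \le C(r' - r)$. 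If the first choice of $\theta$ fails, an averaging argument over $\theta \in [0, 2\pi)$ combined with Fubini produces a good $\theta$. The dual graph $\Gamma^\dagger$ is $3$-regular, planar, nonamenable with bounded face sizes, and admits an analogous packing via the interstice incircles, so the same construction and estimates apply there.

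The main obstacle is the quasi-geodesic step: the winding bound is topologically immediate once the ray construction is in place, but upgrading the pointwise decay of radii along random walk trajectories (\Cref{lem:circle_small}) to a deterministic uniform comparison $r(u) \asymp e^{-c\, d_\Gamma(\rho, u)}$ along an outward Euclidean ray requires combining the $L^2$ constraint on radii, the spectral-radius decay, and the quasi-isometric structure of the embedding in a way that is uniform in the starting vertex $v$.
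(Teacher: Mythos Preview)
Your geometric ray-tracking construction is a genuinely different approach from the paper's, which is entirely probabilistic: the paper samples a simple random walk from $v$, uses the spectral-radius bound $\P_v(X_n\in B_\Gamma(v,\lfloor\eps n\rfloor))\le \Delta^{\eps n}\rho^n$ to force $d_\Gamma(v,X_n)\ge \eps n$ eventually, and uses \Cref{lem:circle_small} to make the tail of the walk have Euclidean diameter $<1/10$ so its loop erasure cannot wind. A single realisation with probability $\ge 1/4$ of satisfying both then furnishes the path. Your winding argument via the straight ray is clean and correct; the issue is the quasi-geodesic step.

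The gap is the claim $r(u)\lesssim e^{-c\,d_\Gamma(\rho,u)}$ uniformly in $u$. None of the inputs you cite deliver this. \Cref{lem:circle_small} is a statement about the random walk trajectory, not about arbitrary vertices; the area bound $\sum_u r(u)^2\le 1$ combined with exponential volume growth only says that \emph{most} vertices at distance $d$ have radius $\lesssim e^{-cd}$, not all; and \eqref{eq:spectral} controls transition probabilities, not radii. In fact a deterministic quasi-isometry between the packing and the hyperbolic disc is a substantially stronger structural statement than anything used in the paper, and it is not known to hold for arbitrary bounded-degree nonamenable triangulations. Without it, you cannot conclude that the Euclidean ray meets only $O(1)$ circles per unit of graph distance: a priori the ray could thread through a region where circles shrink much faster than the graph metric requires, so that many steps along $P$ produce little gain in $d_\Gamma(v,\cdot)$. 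You yourself flag this as the ``main obstacle'' and then assume it away; that is where the argument breaks. The paper's trick sidesteps this entirely by never comparing Euclidean and graph metrics pointwise: it works purely with the random walk displacement bound, which follows in one line from the spectral radius.

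A smaller point: for the dual path, the paper does not repack $\Gamma^\dagger$ but simply slides along the faces adjacent to $P$, inheriting both properties with constants degraded by $\Delta$.
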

\begin{proof}
We first prove that there exists a $C_0>0$ such that for all $v$, with probability at least $3/4$, the loop erasure of the infinite simple random walk started at $v$ has has winding at most $C_0$. Run a simple random walk $(X_n)_{n \ge 0} $ started from $v$, and let $r(X_n)$ denote the radius of the circle in the circle packing with centre $X_n$. Choosing $c$ small so that $e^{2c}\rho<1$, and employing \Cref{lem:circle_small}, we conclude that the diameter of the random walk trajectory $(X_n)_{n \ge N_0}$ is at most $1/10$ with probability at least $3/4$. Now consider the loop erasure of the walk. Let $(Y_i)_{0 \le i \le p}$ be the loop erasure of $(X_i)_{0 \le i \le N_0}$. Let $j$ be the last index of $(Y_i)_{0 \le i \le p}$ which is hit by $(X_j)_{j \ge N_0}$. Clearly, the winding of any continuous segment of $(Y_i)_{0 \le i \le j}$ is at most
$2\pi j \le 2 \pi N_0$ in absolute value. Any continuous segment from the remaining portion of the loop erasure is a subset of $(X_j)_{j\ge N_0}$ which has diameter at most $1/10$ and hence cannot wind more than $2\pi$ around $v$. Overall the winding of any continuous segment of the loop erasure of the walk is at most $2\pi(1+N_0)$. We now can take $C_0 = 2\pi(1 +N_0)$.

We now concentrate on the first item.
Let $(X_n)_{n \ge 0}$ denote the simple random walk trajectory started from $\rho$. 
By an union bound, the probability that $X_n \in B_\Gamma(v, \lfloor \eps n \rfloor)$ for some  $n \ge N_0(\rho,\Delta)$ is at most  
$$
\sum_{n \ge N_0} (\Delta)^{\eps n}\rho^n <\frac12
$$
 for a small enough choice of $\eps = \eps(\rho, \Delta)$ satisfying $\Delta^\eps \rho <1$. Union bounding this estimate with that in the previous paragraph, with probability at least $1/4$, $X_n \not \in B_\Gamma(v, \lfloor \eps n\rfloor)$ for all $n \ge N_0$ and the loop erasure satisfies the second item in the lemma with the chosen constant $C_0$.

 Now join $v$ with a vertex $Y_j$ in the path $Y$ which is closest to $v$ using a geodesic and then concatenate it with $(Y_i)_{i\ge j}$. Call this new path $Y'$. 
  On the event (with probability at least $1/4$ described above), we claim that $Y'$ satisfies all the conditions of the first item of the lemma with $C = \max (\Delta^{N_0}/N_0, \frac1\eps, C_0)$.
To see this, note that for any $N_0\le r<r'$ the portion of $Y'$ between first exit of $B_\Gamma(v,r)$ and the last exit of $B_\Gamma(v,r')$ is a subset of $(X_k: r \le k \le r'/\eps)$. On the other hand if $r<N_0$ and $r<r'$, the last exit of $B_\Gamma(r')$ may occurs after or before the first exit of $B_\Gamma(N_0)$. In the former, we can decompose the path into that between first exit of $B_\Gamma(r')$ and the first exit of $B_\Gamma(N_0)$ and then the rest. The first part has trivially a length at most the volume of $B_\Gamma(N_0)$ which is at most $\Delta^{N_0}$. Similarly in the latter case, we can also bound the whole length by $\Delta^{N_0}$. The result now follows by combining this with the upper bound obtained in the $N_0\le r<r'$ case. 

For the dual graph, using the faces adjacent to the edges of $P$, it is easy to see that we can find a path $P^\dagger$ with $C$ in the second item replaced by $C\Delta$ and $C$ in the first item is replaced by $C+2\pi$.
\end{proof}

We next prove that in a nonamenable graph the loop erasure of any path of length at most $C^r$ can wind at most linearly in $r$  in a strong sense (this is a special property of nonamenable graphs). The path in  the dual in \Cref{lem:path} is somewhat convenient to use as a `reference cut' to compute the winding of a path in the primal.
\begin{figure}
\centering
\includegraphics[scale = 0.7]{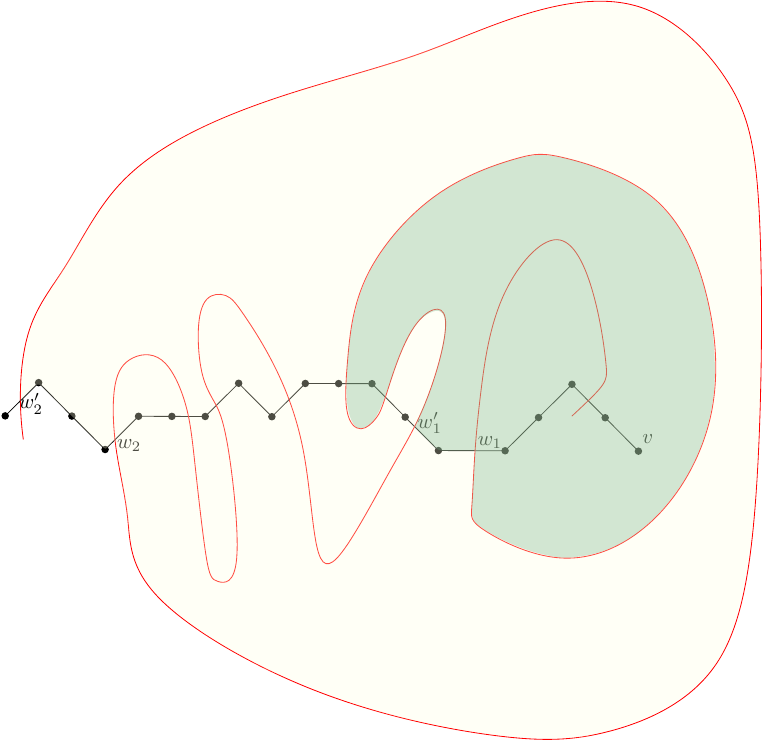}
\caption{An illustration of the proof of \Cref{lem:winding_lemma}. Two consecutive sign pairs are shown. The path $P^\dagger$ is in black and $\eta$ is in red. The domain $D_1$ is shaded light blue and $D_2$ is shaded light yellow.}
\end{figure}

\begin{lemma}\label{lem:winding_lemma}
There exists a constant $C$ such that the following holds.
Let $(v=v_0,v_1, \ldots, v_{k})$ be any path. Let $\gamma$ denote the loop erasure of thus path. Then 
$$
\sup_{\eta \subset \gamma} |W(\eta,v) | \le C\log k
$$
where the supremum is taken over all continuous segments $\eta$ of $\gamma$.
\end{lemma}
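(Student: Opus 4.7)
The plan is to use the dual reference path $P^\dagger$ supplied by \Cref{lem:path} as a topological branch cut for the winding of $\eta$ around $v$. Apply \Cref{lem:path} to $v$ to obtain a simple infinite dual path $P^\dagger$ starting at a face incident to $v$, with topological winding around $v$ bounded by a constant $C_1$ and with length inside $B_\Gamma(v,r)$ at most $C_1 r$ for every $r\ge 1$. Since $\eta$ has at most $k$ edges and is simple, it is contained in $B_\Gamma(v,k)$. After prepending a short arc from $v$ into the first face of $P^\dagger$ (which $\eta$ crosses at most $O(1)$ times since the added arc meets only faces incident to $v$), we may treat $P^\dagger$ as a simple curve from $v$ to $\partial\D$.

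List the crossings of $\eta$ with $P^\dagger$ in order along $\eta$ as $x_1,\dots,x_m$ with signs $\epsilon_1,\dots,\epsilon_m\in\{\pm1\}$. Since $\C\setminus P^\dagger$ is simply connected, $\arg(z-v)$ admits a single-valued continuous branch there with jumps of $\pm 2\pi$ across $P^\dagger$; tracking this branch along $\eta$ yields the identity $|W(\eta,v)|\le 2\pi|s|+C'$ with $s=\sum_l\epsilon_l$, where $C'$ absorbs the winding $C_1$ and the endpoint contributions. Each consecutive same-sign crossing pair $(l,l+1)$ gives rise to a simple closed loop $L_l:=\eta|_{[x_l,x_{l+1}]}\cup P^\dagger|_{[x_l,x_{l+1}]}$ (simple because $\eta$ and $P^\dagger$ are simple and meet only at crossings), and by elementary planar topology a simple curve of net winding $s$ around $v$ must produce at least $|s|-1$ such loops which actually enclose~$v$.

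Let $L_{l_1},\dots,L_{l_N}$ denote these enclosing loops, so that $|s|\le N+O(1)$. Any two simple closed curves in the plane both containing $v$ are nested, so we may order them $L_{l_1}\subsetneq L_{l_2}\subsetneq\cdots\subsetneq L_{l_N}$ as bounded regions. Let $A_j$ be the number of primal vertices enclosed by $L_{l_j}$. The Cheeger inequality~\eqref{eq:nonamenable} applied to this enclosed set (whose edge boundary is bounded by a multiple of $|L_{l_j}|$ in a bounded-degree graph) gives $|L_{l_j}|\ge c{\sf h}\,A_j$. Strict nesting forces $A_{j+1}\ge A_j+c|L_{l_j}|$ because the primal vertices lying on $L_{l_j}$ are in the interior of $L_{l_{j+1}}$ and their number is $\Theta(|L_{l_j}|)$; combining, $A_{j+1}\ge (1+c^2{\sf h})A_j$ and hence $A_j\ge(1+c^2{\sf h})^{j-1}$. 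Since the $\eta_{l_j}$ (resp.\ $P^\dagger_{l_j}$) are edge-disjoint subsegments of $\eta$ (resp.\ of $P^\dagger\cap B_\Gamma(v,k)$),
\[
\sum_{j=1}^{N}|L_{l_j}|\le|\eta|+|P^\dagger\cap B_\Gamma(v,k)|\le (1+C_1)k=O(k).
\]
Combined with $|L_{l_j}|\ge c{\sf h}(1+c^2{\sf h})^{j-1}$, this forces $(1+c^2{\sf h})^{N}=O(k)$, hence $N\le C\log k$ and therefore $|W(\eta,v)|\le 2\pi|s|+C'\le C\log k$.

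The main obstacle lies in the nesting/volume-growth step. One must verify carefully that (i) the edge boundary of the primal region enclosed by the hybrid primal/dual loop $L_{l_j}$ is really controlled by $O(|L_{l_j}|)$, so that \eqref{eq:nonamenable} genuinely applies; and (ii) the primal vertices on $L_{l_j}$ that are absorbed into the interior of $L_{l_{j+1}}$ contribute $\Theta(|L_{l_j}|)$ new vertices rather than degenerating in case the two loops share long common stretches of boundary. Both facts are plausible from bounded-degree planar duality, but require a careful local analysis of how the primal segments $\eta_{l_j}$ and the dual segments $P^\dagger_{l_j}$ actually meet near the crossings.
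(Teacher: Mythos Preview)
Your overall strategy---use the dual reference path $P^\dagger$ from \Cref{lem:path} as a branch cut, count same-sign crossings, build Jordan loops around $v$, and exploit nonamenability to force the number of such loops to be $O(\log k)$---is exactly what the paper does. However, three of your intermediate steps do not go through as written, and the paper handles each of them differently.

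\emph{Nesting.} The sentence ``any two simple closed curves in the plane both containing $v$ are nested'' is false; Jordan curves with a common interior point may well cross. In your construction the loops $L_{l_j}$ can only meet along $P^\dagger$, but that alone does not give nesting. The paper secures nesting by ordering the consecutive sign pairs along $P^\dagger$ (not along $\eta$), and after processing each pair it continuously deforms the corresponding arc of $\eta$ so that it crosses $P^\dagger$ only at the two endpoints of the pair; this deletes intermediate signs and makes the next loop strictly contain the previous one.

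\emph{Disjointness of the $P^\dagger$-segments.} Your bound $\sum_j|L_{l_j}|\le |\eta|+|P^\dagger\cap B_\Gamma(v,k)|$ relies on the $P^\dagger$-parts being edge-disjoint. They are not: the crossings $x_l,x_{l+1}$ are consecutive along $\eta$, so on $P^\dagger$ the corresponding segments can overlap arbitrarily. The paper never sums the $|L_j|$; it only lower-bounds the $\eta$-part of the \emph{outermost} loop.

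\emph{Volume growth.} Your iteration $A_{j+1}\ge A_j+c|L_{l_j}|$ is exactly the step you flag in (ii), and the worry is genuine: when $L_{l_j}$ and $L_{l_{j+1}}$ share a long stretch of $P^\dagger$, the primal vertices on that stretch lie on \emph{both} boundaries and contribute nothing new, so the increment can be as small as the (possibly tiny) $\eta$-part of $L_{l_j}$. The paper sidesteps this completely with a one-line observation: once the loops $L_1,\dots,L_q$ are nested around $v$, any path from $v$ out of $D_q$ must cross all $q$ of them, hence has length $\ge q$, so $B_\Gamma(v,q/2)\subset D_q$. Nonamenability then gives $|D_q|\ge(1+\alpha)^q$ directly, and Cheeger gives $|L_{q+1}|\ge\alpha(1+\alpha)^q$. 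Finally the \emph{first} bullet of \Cref{lem:path} (linear length of $P^\dagger$ between radii) shows that the $P^\dagger$-part of $L_{q+1}$ is at most a constant multiple of its $\eta$-part, so $|\eta_{q+1}|\ge c(1+\alpha)^q$; since $|\eta_{q+1}|\le k$, this forces $q\le C\log k$.

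In short, the skeleton of your argument is the right one, but the nesting and growth steps need the paper's two specific ideas: process sign pairs along $P^\dagger$ with deformation, and replace the Cheeger-iteration by the observation $B_\Gamma(v,q/2)\subset D_q$.
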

\begin{proof}
Take a continuous segment $\eta \subset \gamma$.
Take the path $P^\dagger$ as described just before the lemma started from a face adjacent to $v$, and connect the starting point to $v$ to obtain a path started from $v$. We keep the name $P^\dagger$ for this new path admitting an abuse of notation. 
Orient $P^\dagger$ from $v$ to $\infty$. Let $C_0$ be as in \Cref{lem:path} for $P^\dagger$.
Assign a sign $+$ to an edge of $P^\dagger$ if $\eta$ crosses it from right to left, and $-$ sign if $\eta$ crosses it from left to right. Otherwise an edge of $P^\dagger$ receives no sign. This gives a sequence of signed edges along $P^\dagger$.  We say two edges $e,e'$ is a  consecutive sign pair if there is no other edge $e''$ between $e,e'$ in $P^\dagger$ which has a sign, and $e,e'$ both have the same sign. 

Order the signed edges using the orientation of $P^\dagger$, and consider the first signed pair and assume (without loss of generality) that they are both $+$. Let $e^\dagger, (e')^\dagger$ be the oriented duals of $e,e'$ crossing $P^\dagger$ from right to left and  let $w,w'$ be the points of $e,e'$ at which they cross it. By assumption, both $e^\dagger, (e')^\dagger$ are in $\eta$ and assume $\eta$ hits $e^\dagger$ first and then $(e')^\dagger$. Then the portion of $\eta$ between $w,w'$ (call it $\eta(w,w')$) and the portion of $P^\dagger$ between $w,w'$ form a simple closed loop $L$ enclosing a domain $D$. Furthermore, the head of $(e')^\dagger$ must be outside $D$ as by construction the only way out of $D$ is through $(w,w')$ which is not allowed. Consequently, the tail of $e^\dagger$ is in the domain, and hence so is $v$ (as $v$ is joined to the tail of $e^\dagger$ by $\gamma$). Thus the winding contributed by this portion of $\eta$ is at most $2\pi + C_0$ since the winding of the segment $w,w'$ is at most $C_0$ by construction. Now we can continuously deform $\eta$ between $w$ and $w'$ so that it crosses $P^\dagger$ only at $w$ and $w'$, keeping the winding constant. In effect, we can remove the signs from the edges other than $e,e'$ crossed by $\eta (w,w')$.

We now iterate this procedure. Consider the next consecutive signed pair (after the deletion of the signs in the first step). This gives us another simple loop $L'$ enclosing a domain $D'$ containing $v$ which only crosses $P^\dagger$ at two edges, and it is easy to see that $D'$ contains $D$. Furthermore, the portion of $\eta$ between the end point of $L$ and the start point of $L'$ (after the deformation of $L'$) crosses $P^\dagger$ with alternating signs and thus contributes at most $2\pi$ to the winding. Indeed, to see this, call this portion $\eta_{L,L'}$. Suppose  $\eta_{L,L'}$ creates a consecutive sign pair, which creates a domain $D''$ lying between $D'$ and $D$. By constriction, the portion of $L'$ in $\eta$, before the deformation, can enter and exit $D''$ only an even number of times, which cannot maintain the alternating signs of the crossings required to ensure that $L'$ is created from the first consecutive signed pair.

 Thus for each iteration, we get a nested sequence of loops $L_1,L_2,\ldots, L_q$ and the total winding of $\eta$ from its start point to the end of $L_q$ is at most $(C_0+2\pi)q+2\pi$ and each $L_i$ enclose a domain $D_i$ containing $v$. Now note that any path started at $v$ exiting $D_q$ must have length at least $q$ simply because it has to cross all the loops $L_i$. Thus $B_\Gamma (v,q/2) \subset D_q$. Thus the loop $L_{q+1}$ for the next signed pair encloses a domain $D_{q+1}$ of volume at least $|B_\Gamma (v,q/2)| \ge (1+\alpha({\sf h}))^q$ for some $\alpha({\sf h})>0$ depending on the positive Cheeger constant. Let $\eta_{q+1}$ be the portion of $\eta$ in $L_{q+1}$ and let $(w_{q+1}, w'_{q+1})$ be the portion of $P^\dagger$ in $L_{q+1}$. The length of $(w_{q+1}, w'_{q+1})$ is at most $C_0(d_\Gamma(v, w'_{q+1}) - d_\Gamma(v, w_{q+1}))$ (by the second property of $P^\dagger$ as in \Cref{lem:path}) and the length of $\eta_{q+1}$ is at least $(d_\Gamma(v, w'_{q+1}) - d_\Gamma(v, w_{q+1}))$ \footnote{Although $w_q,w_{q+1}$ are midpoints of edges in $\Gamma$, we can use the distance to one of the endpoints of the edges. We keep this slightly erroneous notation for clarity.}. Furthermore, by nonamenability, the length of $L_{q+1}$ must be at least ${\alpha{(\sf h)}} |B_\Gamma (v,q/2)|$. From here we can easily conclude that the length of $\eta_q$ is at least $\alpha({\sf h})\frac{(1+\alpha({\sf h}))^q}{1+C_0}.$ Since this quantity is at most $k$, we conclude that $q \le C\log k$ for some appropriate constant $C$. Since we already concluded that $|W(\eta, v)|$ is at most $(C_0+2\pi)q+2\pi$, the result follows.
\end{proof}
We are now ready to prove the core ingredient of the second item of \Cref{thm:height}.
\begin{lemma}\label{lem:height_tail}
There exists a $c,c'>0$ such that the following holds.
Let $\gamma_v$ denote the branch of the WUSF started at $v$.
Then for all $r \ge 1$
$$
\P(\sup_{\eta \subset \gamma_v} |W(\eta, \rho)|>r) \le \exp(-ce^{c'r}).
$$
where the supremum is over all connected segments of $\gamma_v$.
\end{lemma}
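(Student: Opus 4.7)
The plan is to realize $\gamma_v$ via Wilson's algorithm rooted at infinity as $\operatorname{LE}(X_0, X_1, \dots)$, where $(X_n)_{n \ge 0}$ is an infinite simple random walk started at $v$, and to compare this loop erasure with the one obtained by truncating the walk at a carefully chosen time. Let $C_0$ denote the constant from \Cref{lem:winding_lemma} and set $N := \lceil e^{r/(2 C_0)} \rceil$. Applying \Cref{lem:winding_lemma} directly to the finite walk $(X_0,\dots,X_N)$ shows that every continuous segment of $\gamma_v^N := \operatorname{LE}(X_0,\dots,X_N)$ has winding at most $C_0 \log N \le r/2$ around $v$.

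Next I introduce the event $\mathcal{A} := \{X_m \notin \{X_0,\dots,X_N\} \text{ for every } m > N\}$. On $\mathcal{A}$ no loop of the walk crosses time $N$, so the infinite LERW decomposes as $\gamma_v = \gamma_v^N \cup \{(X_N, X_{N+1})\} \cup \operatorname{LE}(X_{N+1}, X_{N+2}, \dots)$, and the extension lives entirely in $\Gamma \setminus R_N$, where $R_N$ denotes the walk's range up to time $N$. In the planar embedding, $R_N$ is a connected subgraph containing $v$, so its faces are simply connected open subsets of $\mathbb{D}$, and the extension remains in the single face $F$ containing $X_{N+1}$. A separate argument---combining \Cref{lem:path} (to obtain a reference path of bounded winding inside $F$ starting near $X_{N+1}$) with the sign-pair counting from the proof of \Cref{lem:winding_lemma} applied in the nonamenable subgraph $\Gamma \setminus R_N$---shows that the winding around $v$ of any segment of the extension is bounded by an absolute constant $C_2$. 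Hence on $\mathcal{A}$, $\sup_{\eta \subset \gamma_v}|W(\eta,v)| \le r/2 + C_2$.

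Finally I bound $\P(\mathcal{A}^c)$ using \Cref{prop:CV}: for every vertex $y$, $\P(X_m = y) \le C_1 \varrho^m$ where $\varrho < 1$ denotes the spectral radius (nonamenability). Summing over $m > N$ and union bounding over $y \in R_N$ (of cardinality at most $N+1$) yields $\P(\mathcal{A}^c) \le C_3\, N\, \varrho^N$. Substituting $N = e^{r/(2 C_0)}$ gives $\P(\mathcal{A}^c) \le \exp(-c\, e^{r/(2 C_0)})$ for $r$ large and any $c < \log(1/\varrho)$. Taking $c' = 1/(2 C_0)$, adjusting $c$ downward to absorb the additive $C_2$ correction in the winding bound, and using the trivial bound $\P \le 1$ for small $r$, delivers the claimed tail $\exp(-c\, e^{c'r})$.

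\textbf{Main obstacle.} The most delicate point is the constant bound $C_2$ on the winding contribution of the extension. Simple connectedness of $F$ alone is \emph{not} enough: spiral-shaped simply connected regions in $\mathbb{D}$ can have arbitrarily large oscillation of $\arg(\cdot - v)$. One must genuinely exploit the nonamenability of $\Gamma \setminus R_N$ (inherited from $\Gamma$ upon removing the finite set $R_N$) and the self-avoidance of the loop erasure, most naturally by re-running the sign-pair argument of \Cref{lem:winding_lemma} inside $F$ to convert any would-be spiraling of the LERW extension into a forced exponential volume of enclosed vertices, which is then incompatible with the walk staying in $F$. The remaining quantitative balance between the single-exponential truncation length $N$ and the spectral-radius decay $\varrho^N$ is what delivers the doubly exponential tail.
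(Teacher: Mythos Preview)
Your decomposition via the event $\mathcal{A}$ and the bound $\P(\mathcal{A}^c)\le C_3 N\rho^N$ are fine, and your application of \Cref{lem:winding_lemma} to $\gamma_v^N$ is correct. The genuine gap is exactly the point you flag: the constant bound $C_2$ on the winding of the extension $\operatorname{LE}(X_{N+1},X_{N+2},\dots)$ around $v$. Your proposed fix does not close it. The sign-pair argument of \Cref{lem:winding_lemma} converts $q$ units of winding of a self-avoiding path into $\gtrsim(1+\alpha)^q$ enclosed vertices, and then uses that the path has finite length $k$ to force $q\le C\log k$. Your extension is an \emph{infinite} self-avoiding path, and the face $F$ of $R_N$ it lives in is also infinite, so neither a length constraint nor a volume constraint on $F$ is available; ``incompatible with the walk staying in $F$'' has no content here. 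Nonamenability of $\Gamma\setminus R_N$ alone gives you nothing to play against.

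The paper's proof avoids this issue by a different, geometric mechanism. Rather than the graph-theoretic event $\mathcal{A}$, it invokes \Cref{lem:circle_small}: with probability at least $1-\exp(-c\,C^r)$ one has $r(X_n)\le e^{-cn}$ for all $n\ge C^r$, so the entire trajectory $(X_n)_{n\ge C^r}$ has Euclidean diameter at most $\sum_{n\ge C^r}2e^{-cn}\le 1/10$. A simple curve confined to a Euclidean ball of diameter $1/10$ contributes at most $2\pi$ to the winding around $v$, because $\arg(\cdot-v)$ has bounded oscillation on such a ball. The first $C^r$ steps are then handled exactly as you do, via \Cref{lem:winding_lemma}. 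The key ingredient you are missing is thus the circle-packing input from \Cref{lem:circle_small}; the doubly exponential tail comes from the exponential-in-$C^r$ probability bound there, not from a return-probability estimate on $\mathcal{A}^c$.
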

\begin{proof}
We use a similar estimate as in \Cref{lem:path} and use the notations there. Simply note that using \Cref{lem:circle_small}, with probability at least $1-\exp(-cC^r)$, none of the radii $r(X_n)$ has length more than $e^{-cn}$ for all $n \ge C^r$. Thus on this event the diameter of the trajectory $\{r(X_n)\}_{n \ge C^r}$ is at most $1/10$ for all large enough choice of $C$ whose loop erasure do not contribute more than $2\pi$ to winding. Furthemore, the winding of any continuous segment of the loop erasure of the first $C^r$ steps is (deterministically) at most $C'r$ using \Cref{lem:winding_lemma}. Combining these two estimates, the result follows.
\end{proof}
\begin{proof}[Proof of \Cref{thm:height}]
The first part is proved in \Cref{thm:ht_func_conv}. The estimate of the tail follows from \Cref{lem:height_tail} and \eqref{eq:winding_topoloical}.
\end{proof}

We now record an immediate corollary of \Cref{thm:height}.
\begin{corollary}\label{cor:height_moment}
For all $j \ge 1$ there exists an $M(j)$ such that for any face $f \in F(G)$,
$$
\E(|h^1(f)|^j) \le M(j).
$$
\end{corollary}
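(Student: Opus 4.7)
The plan is to derive the moment bound directly from the double-exponential tail estimate of \Cref{thm:height}. Since that tail bound is uniform over faces $f \in F(G)$, the resulting moment bound will also be uniform, which is exactly what the corollary asserts.

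Concretely, I would use the layer-cake formula
\begin{equation*}
\E(|h^1(f)|^j) = j \int_0^\infty k^{j-1} \P(|h^1(f)| > k)\, dk.
\end{equation*}
By \Cref{thm:height}, there exist constants $C, c > 0$ independent of $f$ such that $\P(|h^1(f)| > k) \le C \exp(-e^{ck})$ for all $k > 0$. Substituting and splitting the integral at some fixed $k_0$ (say $k_0 = 1$) gives
\begin{equation*}
\E(|h^1(f)|^j) \le j \int_0^{k_0} k^{j-1}\, dk + jC \int_{k_0}^\infty k^{j-1} \exp(-e^{ck})\, dk.
\end{equation*}
The first integral is a finite constant depending only on $j$. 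For the second, the double-exponential decay kills any polynomial factor: after the substitution $u = e^{ck}$, so $k = c^{-1}\log u$ and $dk = du/(cu)$, the integrand becomes $(c^{-1}\log u)^{j-1} e^{-u}/(cu)$, whose integral from $e^{ck_0}$ to $\infty$ is clearly finite for every fixed $j$. Call the total $M(j)$; it depends only on $j$ (and on the constants $C, c, \Delta, {\sf h}$ from \Cref{thm:height}) but not on $f$.

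There is no real obstacle here; the only thing to notice is that the constants $C, c$ in the tail bound of \Cref{thm:height} depend only on the graph $\Gamma$ (through $\Delta$ and the Cheeger constant $\sf h$) and not on the particular face $f$, which is why the resulting $M(j)$ can be chosen uniformly in $f \in F(G)$.
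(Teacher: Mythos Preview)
Your proposal is correct and is exactly the standard derivation the paper has in mind: the paper simply records this as ``an immediate corollary of \Cref{thm:height}'' without giving any further argument, and the layer-cake computation you wrote out is the natural way to make that immediate.
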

\bigskip
\bigskip
We are now ready to prove \Cref{thm:height_perc}.
\begin{proof}[Proof of \Cref{thm:height_perc}]
The broad idea is to run Wilson's algorithm and then dominate $\cH_k$ by a subcritical branching process using the tail estimates. However, we need to run the algorithm in a subtle way to control the size of $\cH_k$.

From every vertex $v$ run independent copies simple random walks $(X^v_n)_{n \ge 0}$. Let $N_v = \inf \{n: \sum_{k \ge n}r(X_k)<1/10\}$, where $r(X_k) $ is, as before, the radius of the circles corresponding to  the walk. We already know $N_v$ has exponential tail using \Cref{lem:circle_small}. Let $S^v_k = \{X_i^v: 0 \le i \le k\}$. Let $S_0 = S^\rho_{N_\rho}$ if $N_\rho \ge k$ and let $A_1$ be the set of vertices not in $S_0$ but having a neighbour in $S_0$. Otherwise $S_i  = \emptyset $ for all $i \ge 0$ and $A_i = \emptyset$ for all $i \ge 1$.  Let $S_1 = \cup_{v \in A_1, N_v \ge k}S^v_{N_v}$. If $S_1  = \emptyset $ declare $A_i = \emptyset$ for all $i \ge 2$. Otherwise, iterate the process: inductively, if $A_j \neq \emptyset$, declare $S_j:=\cup_{v \in A_j, N_v \ge k}S^v_{N_v}$ and if $S_j = \emptyset$, declare $\cup_{i \ge j}S_i = \emptyset$ and $\cup_{i \ge j+1} A_i = \emptyset$. Otherwise, define $A_{j+1} $ to be the set of vertices not in $S_j $ but has a neighbour in $S_j$.

Observe that if $S_j = \emptyset $ for some $j$, we have found a cutset $\cC$ separating $\rho$ from $\infty$, where $N_v\le k$ for all $v \in \cC$. Now we use the fact that Wilson's algorithm rooted at $\infty$ is a deterministic function $\Phi(X^v)$ for $v \in \Gamma$, and the order in which the loop erasure is performed will always create the same output of $\Phi$. This is a simple consequence of the cycle popping algorithm due to Wilson, see \cite[Proposition 10.1]{LyonsPeres} for a version in finite graphs, and \cite[Theorem 4.1]{ARS_loop} for a version in infinite graphs. If we loop erase the random walk from any vertex of $\cC$, using \Cref{lem:winding_lemma} and the definition of $N_v$, the winding of any continuous segment of the spanning tree branches is deterministically at most $C\log k$. Thus on the event $S_j = \emptyset $ for some $j$, the cluster of $\cH_{C\log k+1}$ containing $\rho$ is finite.

It remains to show that $S_j = \emptyset $ for some $j$ holds almost surely for large enough choice of $k$. To that end, we dominate $(A_j)_{j \ge 0} $ by a subcritical branching process. Indeed if $A_j \neq \emptyset$, each vertex $v$ in $A_j$ creates at most $\Delta N_v1_{N_v \ge k}$ many vertices in $A_{j+1}$ independently of each other. Now note that using \Cref{lem:circle_small}, $$\E(N_v1_{N_v \ge k} ) = \sum_{t \ge k} Ct e^{-ct} <1/2$$
for a large enough choice of $k$.
\end{proof}
\begin{remark}
From the domination by the subcritical branching process, it is possible to deduce exponential tail of the volume of $\cH_k$ for large enough $k$.
\end{remark}

A very similar proof yields the following.
\begin{corollary}\label{cor:ind_height_perc}
Take two independent dimer coverings and let $\cH_k, \cH_k'$ be their corresponding height function percolation components of height at least $k$. Then there exists a $k_0>0$ such that for all $k \ge k_0$ $\cH_k \cup \cH'_k$ has no infinite component almost surely.
\end{corollary}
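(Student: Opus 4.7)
The plan is to run the exploration from the proof of \Cref{thm:height_perc} but with two independent families of random walks, one for each dimer cover. Since the two dimer covers correspond via the Temperleyan bijection to two independent wired spanning trees, and each can be sampled by Wilson's algorithm rooted at infinity with independent walks, we can couple the samples by using two independent copies $(X_n^{v,(1)})_{n\ge 0}$ and $(X_n^{v,(2)})_{n\ge 0}$ of simple random walks from each vertex $v$.

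First I would define $N_v^{(i)} = \inf\{n : \sum_{k\ge n} r(X_k^{v,(i)}) < 1/10\}$ exactly as in the proof of \Cref{thm:height_perc}, and then set $\widetilde N_v := \max(N_v^{(1)}, N_v^{(2)})$. By \Cref{lem:circle_small} together with a union bound, $\widetilde N_v$ still has exponential tail. I would then rerun the branching-process exploration of \Cref{thm:height_perc} verbatim, but using $\widetilde N_v$ in place of $N_v$: starting from $\rho$, form $S_0, A_1, S_1, A_2,\dots$ where each $v\in A_j$ contributes to $A_{j+1}$ only through the union of the two walk trajectories stopped at $\widetilde N_v$. The number of offspring of $v$ is then dominated by $2\Delta \widetilde N_v \mathbf{1}_{\widetilde N_v\ge k}$, whose expectation tends to $0$ as $k\to\infty$ by the exponential tail, hence is less than $1/2$ for large enough $k$. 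Thus the process dies out almost surely.

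When the process dies out at some step $j$, we obtain a cutset $\cC\subset \Gamma$ separating $\rho$ from infinity such that $N_v^{(i)} \le k$ for every $v\in\cC$ and both $i=1,2$. Applying \Cref{lem:winding_lemma} together with the diameter bound ensured by the definition of $N_v^{(i)}$, any continuous segment of the loop-erased walk from $v$ in either of the two independent WUSFs has winding bounded by $C\log k + 2\pi$. Invoking \Cref{lem:height_winding_finite} and the representation \eqref{eq:winding_topoloical} (where the second term is bounded by $2\pi$), we conclude that for each face $f$ adjacent to $\cC$, both $|h^x(f)|$ and $|(h^x)'(f)|$ are bounded by $C\log k + O(1)$. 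The faces adjacent to the vertices of $\cC$ form a dual cutset separating the face incident to $\rho$ from infinity, and on this cutset both heights are simultaneously small, so the component of $\cH_{k_0}\cup \cH'_{k_0}$ containing any face incident to $\rho$ is finite for $k_0 = C\log k + O(1)$. Repeating the argument with $\rho$ replaced by an arbitrary vertex shows that every component of $\cH_{k_0}\cup \cH'_{k_0}$ is finite almost surely.

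There is no genuine new obstacle compared with \Cref{thm:height_perc}; the only point to verify is that replacing $N_v$ by $\widetilde N_v$ costs only a constant factor in the mean offspring number, which is absorbed once $k$ is taken large enough. The independence of the two walk families is what ensures that $\widetilde N_v$ retains an exponential tail, and the cutset produced by the exploration automatically controls the windings in both USTs simultaneously.
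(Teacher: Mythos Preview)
Your proposal is correct and takes essentially the same approach as the paper. The paper's proof consists of a single sentence: it modifies $N_v$ to $\inf\{n : \sum_{k\ge n} r(X_k) < 1/10,\ \sum_{k\ge n} r(X'_k) < 1/10\}$ for two independent walks $X^v,(X^v)'$, which is exactly your $\widetilde N_v = \max(N_v^{(1)}, N_v^{(2)})$, and leaves the rest implicit; your write-up simply spells out the offspring bound $2\Delta\widetilde N_v \mathbf{1}_{\widetilde N_v\ge k}$ and the cutset conclusion that the paper omits.
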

\begin{proof}
We need to modify the definition of $N_v$ in the proof of \Cref{thm:height_perc} to 
$$
N_v = \inf \{n: \sum_{k \ge n}r(X_k)<1/10, \sum_{k \ge n}r(X'_k)<1/10\}
$$
for two independent copies of simple random walk $X^v,(X^v)'$. 
\end{proof}


We now estimate a certain correlation  between the height functions which will be key in proving the nonexistence of bi-infinite paths in double dimers.  Let $z(X_\infty)$ be the limit point of a random walk started at $\rho$. Let $\Delta h^{z(X_\infty)}$ be the difference in height functions obtained from two i.i.d.\ copies of dimer covers sampled from $\mu^{z(X_\infty)}$. Furthermore assume that conditioned on the limit point $z(X_\infty)$, the dimers are sampled independently of the walk $X$.

\begin{lemma}\label{lem:correlation}
For all $\eps, \eta>0,  k \in \N$ the following is true. There exists a $\delta>0$ such that for all $f_1,\ldots, f_k \in F(\sf G)$ not in $(1-\delta)\D$ and $|f_i - f_j|>\eta$ for all $i \neq j$, 
$$
|\E(\Delta h^{z(X_\infty)}(f_1) \Delta h^{z(X_\infty)}(f_2)\ldots \Delta h^{z(X_\infty)}(f_k)) |<\eps.
$$
\end{lemma}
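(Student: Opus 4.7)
The plan is to condition on the random boundary point $x = z(X_\infty)$ and combine Wilson's algorithm with the Beurling estimate (\Cref{lem:beurling}) to localize each WUSF branch from $v_{f_i}$ to a small Euclidean ball around $f_i$, then exploit mutual independence together with the sample-swap symmetry $(M_1, M_2) \mapsto (M_2, M_1)$ to force the product expectation to vanish on a high-probability event. The key input is the identity $h^x(f_i) = W_{\i}(\gamma_{T_{\sf m}}(f_i))$ from \Cref{lem:height_winding_finite} and the infinite-volume expression \eqref{eq:winding_topoloical}, which together express $h^x(f_i)$ as a deterministic function of the WUSF branch from $v_{f_i}$ and the fixed datum $x$ (the latter entering only through a boundary arc).

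Pick $\eps_1>0$ small, to be tuned at the end, and use \Cref{lem:beurling} to choose $\delta=\delta(\eps_1,\eta)$ so that any simple random walk started at a vertex outside $(1-\delta)\D$ stays within Euclidean distance $\eta/4$ of its starting vertex with probability at least $1-\eps_1$. Condition on $x$, sample two independent dimer covers $M_1, M_2 \sim \mu^x$, and generate each WUSF $T^{(j)}$ by Wilson's algorithm, starting with $k$ independent loop-erased walks from $v_{f_1}, \ldots, v_{f_k}$ and completing $T^{(j)}$ in any order afterwards. Let $G$ be the event that all $2k$ of these initial walks stay inside $B(v_{f_i}, \eta/4)$; by the union bound $\P(G \mid x) \ge 1 - 2k\eps_1$, uniformly in $x$. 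Since $|f_i - f_j| > \eta$, on $G$ these $k$ balls are pairwise disjoint, so by Wilson's algorithm the corresponding branches of $T^{(j)}$ are mutually independent across both $i$ and $j$, and by \eqref{eq:winding_topoloical} each $h^x_j(f_i)$ is a deterministic function of the branch inside its ball together with $x$. Therefore $\Delta h^x(f_1), \ldots, \Delta h^x(f_k)$ are conditionally mutually independent given $(G,x)$. Since the swap $M_1 \leftrightarrow M_2$ preserves $G$ and the conditional law given $x$ while negating each $\Delta h^x(f_i)$, each factor has conditional mean zero, so $\E\!\left[\prod_i \Delta h^x(f_i) \,\big|\, G, x\right] = 0$.

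The complementary event is controlled by Cauchy--Schwarz,
\[
\left|\E\!\left[\mathbf{1}_{G^c}\prod_{i=1}^k \Delta h^x(f_i)\right]\right| \le \P(G^c)^{1/2}\,\E\!\left[\prod_{i=1}^k \Delta h^x(f_i)^2\right]^{1/2} \le \sqrt{2k\eps_1\, M_k},
\]
where Hölder's inequality together with \Cref{cor:height_moment} produces a constant $M_k$ depending only on $k$. Choosing $\eps_1 = \eps^2/(2k M_k)$ and averaging over $x$ yields the lemma. The main subtlety I anticipate is verifying that on $G$ each $h^x_j(f_i)$ genuinely factors through the WUSF data inside $B(v_{f_i}, \eta/4)$: the branch from $v_{f_i}$ is local by construction, and the boundary arc contribution to the winding in \eqref{eq:winding_topoloical} depends only on its endpoints, namely the branch's limit point $x_{f_i}$ (which lies in the local ball on $G$) and the fixed $x$, so the only global information entering the height at $f_i$ is $x$, which has been conditioned on.
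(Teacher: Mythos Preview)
Your proof is correct and follows essentially the same approach as the paper: localize each WUSF branch near its starting face via \Cref{lem:beurling} and Wilson's algorithm, exploit the symmetry of $\Delta h$ under swapping the two samples, and control the complementary event by Cauchy--Schwarz together with the uniform moment bound of \Cref{cor:height_moment}. The paper's presentation differs only cosmetically---it conditions on the branches from $f_1,\ldots,f_{k-1}$ and couples the branch from $f_k$ with an independent LERW to show $\E(\Delta h(f_k)\mid\cF)$ is small---whereas you treat all $k$ faces symmetrically and extract conditional independence plus mean zero for every factor at once; the mechanism is the same.
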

\begin{proof}
Suppose the height functions are sampled from two independent copies of the WUSF $\sf T, \sf T'$.
Condition on the branches $\gamma_i, \gamma_i'$ of $\sf T$ started from $f_1,\ldots, f_{k-1}$. Using \Cref{lem:beurling} we conclude the following. For all $\eps'$ and $\eta' \ll \eta$, we can choose a $\delta$ such that  with probability at least $1-\eps'$ all the branches $\gamma_i, \gamma_i'$ do not leave $B(f_i, \eta')$ for all $1 \le i \le k-1$. Let $\cG$ be this event. Using \Cref{cor:height_moment} we can conclude that 
\begin{multline}
|\E(\Delta h^{z(X_\infty)}(f_1) \Delta h^{z(X_\infty)}(f_2)\ldots \Delta h^{z(X_\infty)}(f_{k-1}))  \\- \E(\Delta h^{z(X_\infty)}(f_1) \Delta h^{z(X_\infty)}(f_2)\ldots \Delta h^{z(X_\infty)}(f_{k-1})) 1_{\cG}| \le C^k \eps'\label{eq:correlation1}
\end{multline}
for some $C>0$ independent of everything else. Now let $\cF$ be the sigma algebra generated by $\gamma_1,\ldots, \gamma_{k-1},\gamma'_1,\ldots, \gamma'_{k-1}$. Conditioned on $\cF$ such that $\cG$ holds, we can next sample the branches $\gamma_k,\gamma_k'$. Using Beurling, for a small enough choice of $\eta'$, we can ensure that the simple random walk started at $f_k$ does not enter any of the neighbourhoods $B(f_j, \eta'')$ , $1 \le j \le k-1$ with probability at least $1-\eps'$. Now define a coupling between $\gamma_k, \gamma_k'$ and LERWs $\tilde \gamma, \tilde \gamma'$ started from $f_k$ where the latter is independent of $\cF$. Run a pair of independnent simple random walks from $\cF_k$, and if it does not hit $\cup_{1 \le i \le k-1} \gamma_i$, define $\tilde \gamma  = \gamma_k$ and $\tilde \gamma' = \gamma_k'$ to be their loop erasure. Otherwise, let $\tilde \gamma$ and $\tilde \gamma'$ to be independent of $\gamma_k, \gamma_k'$.  Let $H$ be the event that $\cup_{1 \le i \le k-1} (\gamma_i \cup \gamma_i')$ is not hit and using the above estimate we get $\P(H^c) <\eps'$. Let $\Delta \tilde h(f_k)$ denote height difference obtained by computing the winding of $\tilde \gamma, \tilde \gamma'$. Then 
\begin{align*}
\E(\Delta h(f_k)|\cF_k)1_\cG &= \E(\Delta h(f_k)1_{H}|\cF_k)1_{\cG} + \E(\Delta h(f_k)1_{H^c}|\cF_k)1_{\cG}\\
& = \E(\Delta \tilde h(f_k)1_{H}|\cF_k)1_{\cG} + \E(\Delta h(f_k)1_{H^c}|\cF_k)1_{\cG}\\
& = \E(\Delta \tilde h(f_k)|\cF_k)1_{\cG}-\E(\Delta \tilde h(f_k)1_{H^c}|\cF_k)1_{\cG} + \E(\Delta h(f_k)1_{H^c}|\cF_k)1_{\cG}
\end{align*}
The first term in the last line above is 0 by independence.
Using \Cref{cor:height_moment} and Cauchy--Schwarz, we can conclude
$$
|\E(\Delta \tilde h(f_k)1_{H^c}|\cF_k) | \le C\eps', \qquad |\E(\Delta h(f_k)1_{H^c}|\cF_k)| \le C\eps'.
$$
Combining this with \eqref{eq:correlation1}, we conclude that
$$
|\E(\Delta h^{z(X_\infty)}(f_1) \Delta h^{z(X_\infty)}(f_2)\ldots \Delta h^{z(X_\infty)}(f_k)) | \le C^k\eps'^2.
$$
The result follows by choosing $\eps'$ so that $C^k\eps'^2 < \eps$.
\end{proof}


\section{Double dimers}
\subsection{Stationary embeddings of dimer decorated graphs}\label{sec:stationary_end}
Let $(\Gamma, \rho)$ be a reversible random graph where $\Gamma$ is a planar, transient, bounded degree, one ended triangulation almost surely. It is known \cite[Example 9.6]{AL07} that $(\Gamma, \rho, T, T^\dagger)$ forms a reversible, marked random graph as well where $T^\dagger$ is the wired uniform spanning tree of $\Gamma$ and $T$ is the free uniform spanning tree of $\Gamma^\dagger$, as in \Cref{thm:UST_limit}. Since the dual is inherently related to an embedding, let us now spend a few words on how to interpret this claim. One may replace any planar embedding of $\Gamma$ by an assignment of numbers to the edges incident to a vertex $v$, which represent the counterclockwise order in which they appear around $v$. This allows us to define the dual of $\Gamma$ in an automorphism invariant way, and consequently $(\Gamma, \rho, T, T^\dagger)$ can be represented as a marked random rooted graph.

We now wish to orient the trees $T, T^\dagger$. However, there is no way to do this in a reversible way for the free spanning tree $T^\dagger$ since it leads to a non-reversible marking. This is related to the fact that a regular tree oriented towards one of its ends forms a nonunimodular transitive graph, see \cite{AL07}. Nevertheless, we claim that there is a way to choose an end of $T^\dagger$ so that the graph with the orientations $(\Gamma, \rho, \vec T, \vec {T}^\dagger)$ forms a stationary random marked graph. This construction goes as follows. Circle pack $(\Gamma, \rho)$ and sample an infinite simple random walk trajectory $(X_n)_{n \ge 0}$ started at $X_0 = \rho$. By \Cref{thm:RW_circle_packing}, the embedding of $(X_n)_{n \ge 0}$ converges to a random point $z(X_\infty)$, which corresponds to a unique end $\xi_{X}$ of $T^\dagger$ via \Cref{prop:convergence_FUSF_ray_random}. Orient $T^\dagger$ towards this end, and call this orientation $(\Gamma, \rho, \vec T, \vec {T}^\dagger_X)$. Observe that this choice of end is independent of the choice of the circle packing (once the random walk trajectory is fixed).

\begin{lemma}\label{lem:Gamma_stationary}
$(\Gamma, \rho, \vec T, \vec {T}^\dagger_X)$ is a stationary marked random graph.
\end{lemma}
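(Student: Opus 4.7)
My plan is to leverage the reversibility of $(\Gamma, \rho, T, T^\dagger)$ recalled at the beginning of this section, together with the crucial observation that the chosen end $\xi_X$ is a tail functional of the auxiliary walk $X$.

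First I will treat the whole walk $X = (X_n)_{n \ge 0}$ as part of the data. Since $(\Gamma, \rho, T, T^\dagger)$ is reversible and $X$ is an SRW from $\rho = X_0$ sampled independently of the trees given $(\Gamma, \rho)$, the standard fact that simple random walk on a reversible random rooted (marked) graph generates a stationary sequence of rooted marked graphs gives, writing $\sigma X := (X_{n+1})_{n \ge 0}$,
\[
(\Gamma, X_0, T, T^\dagger, X) \stackrel{d}{=} (\Gamma, X_1, T, T^\dagger, \sigma X).
\]
Here the walk $X$ is viewed as an additional mark attached to the vertex sequence it visits.

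Next, the key observation: by \Cref{prop:convergence_FUSF_ray_random} the end $\xi_X$ of $T^\dagger$ towards which we orient depends on $X$ only through the circle-packing limit $z(X_\infty) \in \partial \D$, which is a tail event of the walk. Hence $\xi_X = \xi_{\sigma X}$ and $\vec T^\dagger_X = \vec T^\dagger_{\sigma X}$ as oriented trees. Applying the measurable map $(G, v, T, T^\dagger, Y) \mapsto (G, v, \vec T, \vec T^\dagger_Y)$ to both sides of the displayed identity yields
\[
(\Gamma, \rho, \vec T, \vec T^\dagger_X) \stackrel{d}{=} (\Gamma, X_1, \vec T, \vec T^\dagger_{\sigma X}) = (\Gamma, X_1, \vec T, \vec T^\dagger_X),
\]
which is exactly the stationarity assertion.

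The one point that I expect to need care is the well-definedness of $\vec T^\dagger_X$ as an automorphism-invariant mark on $(\Gamma, T^\dagger, X)$, i.e.\ the fact that the end $\xi_X$ does not depend on the particular circle packing used. This is guaranteed by Schramm's rigidity (the packing of $\Gamma$ in $\D$ is unique up to M\"obius transforms of $\D$), since such a transform carries $z(X_\infty)$ to the corresponding boundary point in the new packing, pinning down the same end of $T^\dagger$ via \Cref{prop:convergence_FUSF_ray_random}. Once this is recorded, the argument above is essentially a formal manipulation.
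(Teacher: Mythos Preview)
Your proof is correct and follows essentially the same approach as the paper: both arguments augment the marked graph with the auxiliary walk, use stationarity of $(\Gamma,\rho,T,T^\dagger)$ together with the fact that $(X_n)_{n\ge 1}$ is an SRW from $X_1$ to obtain $(\Gamma,\rho,T,T^\dagger,(X_n)_{n\ge0})\stackrel{d}{=}(\Gamma,X_1,T,T^\dagger,(X_n)_{n\ge1})$, and then note that the chosen end is a tail functional of $X$ so that $\vec T^\dagger_X=\vec T^\dagger_{\sigma X}$. Your remark on embedding-independence of $\xi_X$ is exactly the observation the paper records just before stating the lemma.
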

\begin{proof}
By stationarity, $$(\Gamma, X_1,  T, T^\dagger)  \stackrel{(d)}{=} (\Gamma, \rho, T, T^\dagger).$$ Since $(X_n)_{n \ge 1}$ has the same law as a simple random walk started from $X_1$, we get that 
$$(\Gamma, X_1,  T, T^\dagger, (X_n)_{n \ge 1})  \stackrel{(d)}{=} (\Gamma, \rho, T, T^\dagger, (X_n)_{n \ge 0}).$$
We can therefore conclude
$$
(\Gamma, \rho, \vec T, \vec {T}^\dagger_X) \stackrel{(d)}{=}(\Gamma, X_1, \vec T, \vec {T}^\dagger_X),
$$
thereby completing the proof.
\end{proof}

Conditioned on $(\Gamma, \rho)$, let $(S,  S^\dagger)$ be an independent copy of $T, T^\dagger$, and orient them using the same random walk $(X_n)_{n \ge 0}$ to obtain $(\Gamma, \rho, \vec S, \vec S^\dagger_X, \vec T, \vec T^\dagger_X).$ Then by exactly the same logic as in the proof of \Cref{lem:Gamma_stationary},
\begin{corollary}\label{cor:iid_trees}
$(\Gamma, \rho, \vec S, \vec S^\dagger_X, \vec T, \vec T^\dagger_X)$ is a stationary marked random graph.
\end{corollary}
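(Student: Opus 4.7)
The plan is to mimic the proof of \Cref{lem:Gamma_stationary} almost verbatim, the only new observation being that attaching the independent copy $(S,S^\dagger)$ does not break the distributional identity used there. Concretely, I would start from the stationarity of $(\Gamma,\rho,T,T^\dagger)$, then upgrade it by coupling in $(S,S^\dagger)$ as an independent sample of the wired/free tree pair conditionally on $(\Gamma,\rho)$, and finally add the random walk trajectory.

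First I would argue that conditioning on $(\Gamma,\rho)$ and then sampling two independent copies of the pair of trees gives a stationary marked graph
\[
(\Gamma,\rho,S,S^\dagger,T,T^\dagger) \stackrel{(d)}{=} (\Gamma,X_1,S,S^\dagger,T,T^\dagger).
\]
This is immediate because the conditional law of $(S,S^\dagger,T,T^\dagger)$ given the rooted graph only depends on the unrooted graph (the wired/free USF measures are intrinsic), and $(\Gamma,\rho)\stackrel{(d)}{=}(\Gamma,X_1)$ by stationarity; one just builds both sides on the same probability space by first generating $\Gamma$, then sampling the trees, then choosing the root.

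Next I would include the whole walk. Since $(X_n)_{n\ge 1}$, conditionally on $(\Gamma,X_1)$, has the same law as a simple random walk started at $X_1$, and since given $(\Gamma,\rho)$ the walk is independent of $(S,S^\dagger,T,T^\dagger)$, we can enlarge the identity to
\[
(\Gamma,\rho,S,S^\dagger,T,T^\dagger,(X_n)_{n\ge 0})
\stackrel{(d)}{=}
(\Gamma,X_1,S,S^\dagger,T,T^\dagger,(X_n)_{n\ge 1}),
\]
exactly as in \Cref{lem:Gamma_stationary}.

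Finally, both orientations $\vec T^\dagger_X$ and $\vec S^\dagger_X$ are constructed by the same deterministic rule: circle pack the graph, take the boundary limit $z(X_\infty)$ of the walk, and use \Cref{prop:convergence_FUSF_ray_random} to pick the unique end of the free tree converging to that limit point. Applying this rule on both sides of the above distributional identity (the rule is a measurable function of $(\Gamma,S,S^\dagger,T,T^\dagger,(X_n))$ that does not depend on the root) yields
\[
(\Gamma,\rho,\vec S,\vec S^\dagger_X,\vec T,\vec T^\dagger_X)
\stackrel{(d)}{=}
(\Gamma,X_1,\vec S,\vec S^\dagger_X,\vec T,\vec T^\dagger_X),
\]
which is exactly the stationarity claim. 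I do not foresee any real obstacle here; the only subtle point is to verify that the orientation rule is invariant under changing the root (so that applying it on both sides gives the same random marking), which follows from the fact that $z(X_\infty)$ and hence $\xi_X$ depend only on the tail of $X$ and on the conformal class of the circle packing, both unaffected by moving the root one step along the walk.
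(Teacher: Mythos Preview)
Your proposal is correct and follows exactly the approach the paper intends: the paper's own proof is the single line ``by exactly the same logic as in the proof of \Cref{lem:Gamma_stationary}'', and you have simply written out that logic with the extra independent pair $(S,S^\dagger)$ inserted. The one subtlety you flag---that the end $\xi_X$ depends only on the tail of the walk and the unrooted graph, hence is unaffected by shifting the root---is precisely what makes the final measurable-function step go through, and the paper leaves this implicit as well.
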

\subsection{Proof of \Cref{thm:main}}
In this section we prove \Cref{thm:main} and to that end we assume $(\Gamma, \rho)$ is a reversible, random rooted graph which is a nonamenable, one ended triangulation almost surely with degrees uniformly bounded by $\Delta$.

 Let $(\Gamma, \rho, \vec S, \vec S^\dagger_X, \vec T, \vec T^\dagger_X)$ be as in \Cref{cor:iid_trees}.
Using the Temperleyan bijection, we obtain two i.i.d.\ dimer coverings of $G$, call them $M^S_X, M^T_X$. Then $(\Gamma, \rho, M^S_X, M^T_X )$ form a stationary, marked, random rooted graph simply because Temperleyan bijection describes $M^S_X, M^T_X$ as local automorphism equivariant functions of $\vec S_X, \vec S^\dagger, \vec T_X, \vec T^\dagger$. We first prove the following theorem.
\begin{thm}\label{thm:main_random}
Let $(\Gamma, \rho, M^S_X, M^T_X)$ be as above. Then $M^S_X \Delta M^T_X$ do not contain any bi-infinite path almost surely.
\end{thm}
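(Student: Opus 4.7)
The plan is a dichotomy on the (random) number $N$ of bi-infinite paths of $M^S_X \Delta M^T_X$, using that $(\Gamma,\rho,M^S_X,M^T_X)$ is stationary by \Cref{cor:iid_trees}. The key structural fact is that the height difference $\Delta h := h^S_X - h^T_X$ is constant on each face-component of the complement of $M^S_X \Delta M^T_X$ and jumps by $\pm 1$ across each edge of the symmetric difference; in particular on either side of any bi-infinite path $P$ the value of $\Delta h$ is a constant integer, and the two sides' constants differ by $1$, so at least one of them is nonzero.

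\textbf{Case $N=\infty$.} Distinct bi-infinite paths are pairwise vertex-disjoint (every double-dimer vertex has degree exactly $2$), and in the circle-packing embedding each one is a curve joining two distinct points of $\partial\D$. Planarity together with compactness at $\partial\D$ then forces arbitrarily deeply \emph{nested} subfamilies. An anti-concentration argument on the $\pm 1$ jumps across $k$ nested paths yields an innermost face-component on which $|\Delta h|\ge\Omega(\sqrt{k})$, and by the constancy property this component sits inside an infinite cluster of $\{f:|\Delta h(f)|\ge K\}$ for every $K$. Since $|\Delta h(f)|\ge 2k$ forces $|h^S_X(f)|\ge k$ or $|h^T_X(f)|\ge k$, this gives an infinite cluster of $\cH_k\cup\cH'_k$ for every $k$, contradicting \Cref{cor:ind_height_perc} for $k\ge k_0$.

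\textbf{Case $1\le N<\infty$.} Conditionally on this event, mark a bi-infinite path $P$ uniformly at random and apply a Palm-type reweighting by $1_{\rho \in V(P)}$; this produces a measure under which $\rho$ is a typical vertex of $P$ and $(\Gamma,\rho,M^S_X,M^T_X,P)$ is stationary under the simple random walk shift. Run $k$ additional independent walks $X^{(1)},\ldots,X^{(k)}$ from $\rho$. By stationarity of the re-rooted measure plus a Borel--Cantelli/ergodic argument, each walk returns to $V(P)$ infinitely often almost surely. For each $i$ take a very late return time $n_i$ and let $f_i$ be a face adjacent to $X^{(i)}_{n_i}$ on the deterministically chosen side of $P$ where $\Delta h=c\neq 0$. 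Since independent walks converge to distinct nonatomic boundary points (\Cref{thm:RW_circle_packing}), for sufficiently large $n_i$ the $f_i$ lie outside $(1-\delta)\D$ and are pairwise $\eta$-separated in Euclidean distance. Unbiasing back to the original stationary measure and applying \Cref{lem:correlation} then yields
\[
\Big|\E\!\Big[\prod_{i=1}^{k}\Delta h(f_i)\Big]\Big|<\varepsilon
\]
for any $\varepsilon>0$ (with $k$ fixed). But $\prod_{i=1}^{k}\Delta h(f_i) = c^k$ on a positive-probability event, and a Cauchy--Schwarz truncation using the moment bound in \Cref{cor:height_moment} forces this expectation to be bounded away from $0$ uniformly in $\varepsilon$, a contradiction for $\varepsilon$ small enough.

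\textbf{Main obstacle.} The most delicate points are (i) making the nesting and height-growth argument in Case $N=\infty$ quantitatively sharp enough that the resulting infinite $\Delta h$-clusters really exceed the fixed threshold $k_0$ from \Cref{cor:ind_height_perc}, and (ii) in Case $1\le N<\infty$, carrying out the Palm re-rooting so that the resulting measure is genuinely stationary under walk shifts and so that the auxiliary independent walks $X^{(i)}$ preserve the independence hypothesis of \Cref{lem:correlation}, given that $M^S_X,M^T_X$ themselves are already coupled to the original walk $X$ through the orientation of the free spanning trees.
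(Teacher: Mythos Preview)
Your overall architecture --- split into the cases $N=\infty$ and $1\le N<\infty$, kill the first with \Cref{cor:ind_height_perc}, and kill the second by playing the decorrelation \Cref{lem:correlation} against a lower bound forced by the presence of a fixed bi-infinite path --- is exactly the paper's strategy. The $N=\infty$ case is essentially right; the paper does not use any claim that bi-infinite paths converge to boundary points (which is not established), but simply picks deterministic $f,f'$ both adjacent to bi-infinite paths with many separating loops and then uses \Cref{lem:loop_bernoulli} to flip the signs.

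The case $1\le N<\infty$ has a genuine gap. Your faces $f_i$ are chosen \emph{adaptively with respect to the dimers}: the return time $n_i$ to $V(P)$ depends on $P$, and the ``side with $\Delta h=c\neq 0$'' depends on $\Delta h$ itself. But \Cref{lem:correlation} is stated and proved only for faces that are deterministic given $(\Gamma,\rho)$ (or, as the paper actually uses it, measurable with respect to auxiliary random walks independent of the dimers). With your adaptive $f_i$'s there is no reason for the product moment to be small; indeed your own lower-bound computation $\prod_i \Delta h(f_i)=c^k$ shows exactly why. So the two halves of the contradiction cannot both be obtained for the same choice of faces.

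The paper resolves this by never looking at $\Delta h(f_i)$ alone. It drops the Palm reweighting entirely, instead marking faces red/blue by the side of $P$; this keeps the marked graph stationary, and by the ergodic theorem each of four independent walks sees a positive density of both colours. Then it picks faces $F_1,\dots,F_4$ uniformly along the walks --- hence independently of the dimers, so \Cref{lem:correlation} gives the upper bound $|\E[(\Delta h(F_1)-\Delta h(F_2))(\Delta h(F_3)-\Delta h(F_4))]|<2\eta^{10}$. The matching lower bound comes from \Cref{lem:loop_bernoulli}, which turns this expectation into $\E[|\cL(F_1,F_2)\cap\cL(F_3,F_4)|]$; with probability at least $(\eta/\Delta)^4$ the colours come out so that $P$ separates both pairs, forcing this expectation to be at least $\tfrac12(\eta/\Delta)^4$. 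The passage through \Cref{lem:loop_bernoulli} is precisely what lets one extract a nontrivial lower bound while keeping the $F_i$'s dimer-independent; this is the idea your argument is missing.
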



To define the height function of $M^S_X, M^T_X$, we can arbitrarily circle pack $\Gamma$.
Note that the height functions $h_{M^S_X}$ or $h_{M^T_X}$ have the same law as $h^{z(X_\infty)}$ as described in \Cref{thm:ht_func_conv} where $z(X_\infty)$ is the limit point of the random walk $X$ in the packing. Although the individual height  functions do depend on the choice of the embedding (through the reference flow),  $$\Delta h(f) := h_{M^S_X}(f)  -h_{M^T_X}(f) $$ is independent of the embedding and is a measurable function of $(\Gamma, \rho, M^S_X, M^T_X )$. This is a simple consequence of the change in height function formula \eqref{eq:height_mobius}. In what follows, we let ${\mathbf P}$, $\mathbf E$ and $\mathbf{Var}$ denote the conditional law of the dimers given a realization of $(\Gamma, \rho)$ which is a bounded degree, nonamenable, one ended triangulation.

 \begin{lemma}\label{lem:loop_bernoulli}
 Condition on $(\Gamma, \rho)$.
  Let $\cL(f,f')$ denote the collection of finite loops and potential bi-infinite paths separating $f,f'$ in $M^S_X \Delta M^T_X$. Then $$\Delta h(f) - \Delta h(f')=\sum_{L\in \cL(f,f')}  \xi_{L}$$ in law, where $\xi_L$ are i.i.d.\ 2Bernoulli$(1/2)-1$ random variables independent of $\cL(f,f')$. In particular, 
  
 $$
 \mathbf{Var}(\Delta h(f) - \Delta h(f')) = {\bf E}(|\cL(f,f')|).
 $$
 \end{lemma}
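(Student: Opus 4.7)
\medskip
My plan is to exploit the standard double-dimer flipping symmetry, carried out in four steps.

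\textbf{Step 1} (height difference is a local sum over $M^S_X \Delta M^T_X$). The reference flow $\omega_{\text{ref}}$ cancels in the difference $h_{M^S_X}-h_{M^T_X}$, so for any dual path $\pi=(f=f_0,f_1,\ldots,f_n=f')$ crossing primal edges $e_1,\ldots,e_n$,
\[
\Delta h(f)-\Delta h(f')=\sum_{i=1}^{n}\bigl(\omega_{M^S_X}(e_i)-\omega_{M^T_X}(e_i)\bigr),
\]
and each summand is $0$ unless $e_i\in M^S_X\Delta M^T_X$, in which case it equals $\pm1$. Along any single component $L$ (a finite loop or a bi-infinite path) of $M^S_X\Delta M^T_X$ the edges alternate between $M^S_X$ and $M^T_X$, so the contribution of $\pi\cap L$ telescopes: it is $0$ when $L$ does not separate $f$ from $f'$, and equals $\varepsilon_L\in\{\pm1\}$ when it does, the sign $\varepsilon_L$ being determined purely by $L$ and the fixed orientation of $\pi$.

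\textbf{Step 2} (i.i.d.\ labels given the symmetric difference). Given $\cE:=M^S_X\Delta M^T_X$, the pair $(M^S_X,M^T_X)$ is determined by a label $\sigma_L\in\{\pm1\}$ per component, specifying which of the two alternating halves of $L$ lies in $M^S_X$. I claim that conditional on $\cE$ the labels $\sigma_L$ are i.i.d.\ uniform on $\{\pm1\}$. For finite loops this is the classical double-dimer symmetry: flipping one loop is a measure-preserving involution on pairs of dimer covers, such involutions on disjoint loops commute, and the statement passes to the weak limit via \Cref{thm:main0}. For bi-infinite paths the same symmetry is used, but localized: the event we care about (the sign contribution of $L$ to $\Delta h(f)-\Delta h(f')$) depends on $(M^S_X,M^T_X)$ only through the finitely many edges $e_i$ in $\pi\cap L$, so it suffices to flip a finite sub-arc of $L$ containing $\pi\cap L$, and there the finite-volume argument applies again.

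\textbf{Step 3} (assemble the representation). Combining Steps 1 and 2,
\[
\Delta h(f)-\Delta h(f')=\sum_{L\in\cL(f,f')}\sigma_L\,\varepsilon_L\;=:\;\sum_{L\in\cL(f,f')}\xi_L,
\]
where $\xi_L=\sigma_L\varepsilon_L\in\{\pm1\}$ is, conditionally on $\cL(f,f')$, uniform on $\{\pm1\}$ and independent across $L$, i.e.\ distributed as $2\mathrm{Bernoulli}(1/2)-1$ and independent of $\cL(f,f')$. Finiteness $|\cL(f,f')|<\infty$ a.s.\ is automatic, since every separating component must cross the chosen finite dual path $\pi$, and $\pi$ has finitely many edges.

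\textbf{Step 4} (variance). Conditioning on $\cL(f,f')$ and using $\mathbf{E}(\xi_L\mid\cL)=0$ and $\mathbf{Var}(\xi_L\mid\cL)=1$,
\[
\mathbf{Var}\bigl(\Delta h(f)-\Delta h(f')\bigr)=\mathbf{E}\bigl[\mathbf{Var}(\,\cdot\mid\cL(f,f'))\bigr]=\mathbf{E}(|\cL(f,f')|),
\]
as claimed.

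\medskip
The main obstacle is the bi-infinite case in Step 2, because a literal flip of an entire bi-infinite path is not an obvious measure-preserving operation on $G$. The way around it is the localization observation already used above: the only piece of information about $L$ entering $\Delta h(f)-\Delta h(f')$ is the alternating pattern along the finite set $L\cap\pi$, so the symmetry of $\sigma_L$ reduces to a finite-volume flipping of this sub-arc, where the classical double-dimer argument applies unchanged.
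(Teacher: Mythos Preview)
Your overall strategy (the double-dimer loop-flip involution) is exactly the paper's, and Steps 1, 3, 4 are fine. The gap is in Step~2, in your treatment of bi-infinite paths. Flipping a \emph{finite sub-arc} of a bi-infinite path $L$ is not a measure-preserving operation on pairs of perfect matchings: along $L$ the edges alternate between $M^S_X$ and $M^T_X$, so if you swap the labels on a sub-arc $[v_a,v_b]\subset L$ but not on the adjacent edges $(v_{a-1},v_a)$ and $(v_b,v_{b+1})$, the endpoints $v_a,v_b$ end up with either $0$ or $2$ incident edges in $M^S_X$, destroying the perfect matching. Thus the ``localization'' you propose does not give an involution on the state space, and the claimed conditional uniformity of $\sigma_L$ for bi-infinite $L$ is not justified.

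The paper sidesteps this cleanly by carrying out the entire flip argument in the finite approximations $G_n$: there $M_n^S\Delta M_n^T$ consists only of finite loops, so every component can be flipped legitimately. One shows that for fixed $k$ the events $\{|\cL_{n,f,f'}|=k,\ \xi_{L_i}=\ve_i\}$ all have the same probability via the loop-swap bijection, hence conditional on $|\cL_{n,f,f'}|=k$ the signs are i.i.d.\ uniform. Since the local configuration (loops through a fixed ball, together with their signs) converges weakly by \Cref{thm:main0}, the i.i.d.\ structure passes to the limit, and the bi-infinite paths inherit uniform independent signs for free. In short: you already invoke the weak-limit argument for finite loops; the fix is simply to use that same finite-volume argument uniformly for \emph{all} components and never attempt a sub-arc flip in $G$.
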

\begin{proof}
Let $G$ be the superimposed graph as described in \Cref{sec:height}.
Observe that for two adjacent faces $f,f'$ separated by an edge $e = (b,w)$ with $b$ being a black vertex, $w$ a white vertex and $f$ being to the left of $\vec e$ oriented from black to white,$$\Delta h(f) - \Delta h(f')  =1_{e \in M^T_X} - 1_{e \in M^S_{X}}.$$

Let us now denote by $(M_n^S, M_n^T)$ the finite approximation of $M_X^S, M^T_X$: take an exhaustion $(G_n)_{n \ge 1}$ with the removed black vertex from the boundary $\mathfrak b_n$ converging to $z(X_\infty)$.
Let $\cL_{n,f,f'}$ denote the set of loops which seperate $f,f'$ in $G_n$. Since every such loop must intersect a large enough ball of radius $R$, $\cL_{n,f,f'} \cap B_G(\rho,R)$ must converge in law to $\cL_{f,f'} \cap B_G(\rho, R)$. Take a fixed path $p$ started at $f$ and ending at $f'$. Let $L_1, L_2, \ldots,L_{|\cL_{n,f,f'}|}$ denote the ordered set of loops crossed by $p$. Consider the event $$\cE_{k,\eps_1,\ldots, \eps_k}:= \{|\cL_{n,f,f'}| = k, \xi_{L_i} = \ve_i, 1 \le i \le k\}$$ where $k \in \N$ and $\eps_i \in \{\pm 1\}$ for all $i$ and $\xi_{L_i}$ denote the change in $\Delta h$ when $L_i$ is crossed by $p$ ($\sum_i \xi_{L_i}$ is of course independent of the choice of $p$). Now we claim that for any other choice of $\eps_i' \in \{\pm 1\}$, $1 \le i \le k$, $$\bP(\cE_{k,\eps_1,\ldots, \eps_k}) =\bP (\cE_{k,\eps_1',\ldots, \eps_k'}).$$ Indeed take a double dimer cover in $\cE_{k,\eps_1,\ldots, \eps_k}$ and for every $i$ such that $\eps_i \neq \eps_i'$, swap the dimers of $M_n$ and $M_n'$ along $L_i$. The image of this mapping gives an element of $\cE_{k,\eps_1',\ldots, \eps_k'})$. This describes an involution, hence a bijection. Since every dimer has cover has the same weight, the probabilities of these events are the same. Thus we conclude that conditioned on $|\cL_{n,f,f'}| = k$, the  $\xi_i$s are i.i.d. $+1$ or $-1$ with equal probability. Since $|\cL_{n,f,f'}|$ converge in law to $|\cL(f,f')|$, we are done.
\end{proof}

\begin{proof}[Proof of \Cref{thm:main_random}]

Let $\cD$ be the event that there exists an infinite path in $M^S_X \Delta M^T_X$ and assume $\P(\cD)>0$. Now we divide into two disjoint cases: let $\cD_\infty$ be the event that there are infinitely many infinite paths, and let $\cD_{\text{fin}}$ denote its complement in $\cD$.

\textbf{Case 1: $\P(\cD_\infty)>0$.} Condition on $(\Gamma, \rho) $ such that $\bP(\cD_\infty)>0$. Let $\cH^S_k$ and $\cH^T_k$ denote the faces with height at least $k$ for $M^S_X$ and $M^T_X$ respectively. The idea is to conclude that there is an infinite component of $\cH^S_k \cup \cH^T_k$ with positive probability for arbitrarily large $k$, which will contradict \Cref{cor:ind_height_perc}.

 Observe that $\Delta h$ is constant along all the faces adjacent to a bi-infinite path which are on the same side of it. Let $\cL_{f,f'}$ be as in the proof of \Cref{lem:loop_bernoulli} (the set of loops which separate $f,f'$). For all $k \ge 1$, one can find $R>0$ and  $f,f'$ such that the event $$\cD(f,f',k):=\{|\cL_{f,f'} \cap B_\Gamma(\rho, R) | \ge k \text{ and $f,f'$ are both adjacent to bi-infinite paths}\}$$ has positive $\bP$ probability.  It follows from \Cref{lem:loop_bernoulli} that conditioned on $|\cL_{f,f'}| \ge k$, $\Delta h(f) - \Delta h(f')$ is equal in law to $\sum_{L \in \cL(f,f')} \xi_L$ and consequently we obtain $$\bP(\cD(f,f',k) \cap \Delta h(f) - \Delta h(f') \ge k/2) >0.$$ 
Since on the event above, both $f,f'$ are adjacent to some bi-infinite path, and it is a simple observation that $\Delta h$ is constant along the faces which are adjacent to a path, we obtain that $\Delta h$ has an infinite component with all $\Delta h$ values at least $k/2$. On each such component, at least one of the height functions must have height at least $k/4$. This is a contradiction to \Cref{cor:ind_height_perc} for a large choice of $k$.

\textbf{Case 2:} $\P(\cD_{\text{fin}})>0$.
On this event,  pick one bi-infinite path uniformly, call it $P$. Clearly $P$ partitions the faces into two subsets, color them uniformly red and blue. Let ${\sf C}:= (C(f))_{f \in F(G)}$ denote the color of the faces. It is straightforward to verify that $(\Gamma, \rho, M^S_X, M^T_X, \sf C )$ is a stationary random rooted marked graph.
Pick an arbitrary choice of the circle packing of $(\Gamma, \rho)$. Run 4 copies of independent simple random walks, $(X^i_n)_{n \ge 0}$ for $i \in \{1,2,3,4\}$ started from $\rho$. Let  
$R(\eta)$ be the event that the random walks converge to 4 distinct points on the boundary, and the minimal distance between each limit point is at least $\eta$.
Let $\cG(\eta,N)$ be the event that for all $n \ge N $ for each $i \in \{1,2,3,4\}$, 
\begin{align*}
&|\{0 \le j \le n:  X^i_j \text{ has a blue face adjacent to it}\}| \ge \eta n \text{, and, }\\
&|\{0 \le j \le n:  X^i_j \text{ has a red face adjacent to it}\}| \ge \eta n.
\end{align*}


\begin{lemma}\label{lem:G}
For all $\eps>0$ there exists a $\eta_0>0$ and $N_0\ge 1$ such that for all $\eta<\eta_0$ and $N \ge N_0$, $\P(\cG(\eta, N) \cap R(\eta)) \ge 1-\eps$.
\end{lemma}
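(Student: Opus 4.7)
The plan is to bound $\P(R(\eta))$ and $\P(\cG(\eta, N))$ separately and combine them via a union bound. For $R(\eta)$, I would apply \Cref{thm:RW_circle_packing}: each walk $X^i$ converges almost surely to $z(X^i_\infty) \in \partial \D$ with a nonatomic distribution, and independence of the four walks forces their limits to be pairwise distinct almost surely; so one may choose $\eta_1>0$ small enough that $\P(R(\eta_1)) \ge 1-\eps/2$.

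For $\cG(\eta,N)$, first upgrade \Cref{cor:iid_trees} to include $\sf C$: on $\cD_{\text{fin}}$ there are finitely many bi-infinite paths, so choosing $P$ uniformly and labeling its two sides by an independent coin flip yields a stationary random rooted marked graph $(\Gamma, \rho, M^S_X, M^T_X, \sf C)$ under simple random walk. Apply Birkhoff's ergodic theorem to the indicator $f_c(v) := \mathbf{1}\{v \text{ has a } c\text{-colored face adjacent}\}$ for each $c \in \{\mathrm B, \mathrm R\}$: the empirical density $\frac{1}{n+1}\sum_{j=0}^n f_c(X^i_j)$ converges almost surely to $Z^c_i := \bE[f_c(\rho)\mid \mathcal{I}_i]$, where $\mathcal{I}_i$ is the walk-invariant $\sigma$-algebra. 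By the color symmetry of $\sf C$, $Z^B_i$ and $Z^R_i$ are identically distributed, and since every face carries a color $Z^B_i + Z^R_i \ge 1$, so $\bE Z^c_i \ge 1/2$.

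The crux is to show $\min(Z^B_i, Z^R_i) \ge \eta$ with probability at least $1-\eps/8$ for $\eta$ small and each $i$. The key observation is that every vertex on $P$ is adjacent to faces on both sides of $P$, so $\min(Z^B_i, Z^R_i) \ge Z^P_i$, where $Z^P_i$ denotes the asymptotic density of visits of $X^i$ to $V(P)$. Stationarity gives $\bE Z^P_i = \P(\rho \in P)$, which is positive on $\cD_{\text{fin}}$ by a mass-transport style argument exploiting the stationarity of $(\Gamma, \rho, P)$; combined with ergodicity of the SRW-shift on the marked graph (which in the nonamenable setting should follow from a tail-triviality argument suitably adapted to this non-reversible marking), $Z^P_i$ is almost surely positive on $\cD_{\text{fin}}$. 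One then chooses $\eta$ so that $\P(Z^P_i < 2\eta) < \eps/16$, and $N$ so that the pre-limit densities are within $\eta$ of their limits for all $n \ge N$ with probability $\ge 1-\eps/16$; a union bound over the four walks and with $R(\eta)$ then concludes.

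The main obstacle is the step upgrading $\bE Z^P_i > 0$ to almost-sure positivity of $Z^P_i$. Since the orientation of the free spanning tree breaks reversibility of the marked graph, standard ergodicity results for unimodular random graphs do not apply directly, so an ergodic-decomposition argument specific to this stationary non-reversible setting—verifying that the invariant event $\{Z^P_i = 0\}$ has zero measure on $\cD_{\text{fin}}$—will likely be the most delicate step.
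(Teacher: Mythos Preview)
Your outline matches the paper's two-sentence proof: nonatomicity of the exit measure handles $R(\eta)$, and the ergodic theorem applied to the stationary marked graph $(\Gamma,\rho,M^S_X,M^T_X,\mathsf C)$ handles $\cG(\eta,N)$. The paper simply asserts that since $\P(\rho\text{ is adjacent to a blue face})$ and $\P(\rho\text{ is adjacent to a red face})$ are both positive, ``by the Ergodic theorem, the density of times when the random walker is adjacent to a red face is strictly positive,'' and then chooses $\eta$ and $N$ accordingly; it does not separate the argument into $Z^c_i$, $Z^P_i$, or invoke anything like mass transport.

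The concern you isolate---that Birkhoff only yields $Z^c_i=\bbE[f_c(\rho)\mid\cI_i]$, and one must still pass from positive mean to almost-sure positivity---is legitimate, and the paper's proof does not spell this step out either. Two remarks on your proposed fix. First, positivity of $\P(\rho\in P)$ does not need mass transport (which would require the reversibility you correctly note is absent): if $\P(\rho\in P)=0$ then by stationarity $\P(X_n\in P)=0$ for every $n$, contradicting that simple random walk on a connected graph reaches any fixed vertex of the nonempty set $P$ with positive probability. Second, the genuine obstacle is exactly the one you name at the end: upgrading $\bbE[Z^P_i]>0$ to $Z^P_i>0$ almost surely on $\cD_{\text{fin}}$ in this non-reversible stationary setting. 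A cleaner attack than an abstract ergodicity claim is to note that $\{Z^B_i=0\}$ is shift-invariant and that $\bbE[f_B(\rho)\mathbf 1_{\{Z^B_i=0\}}]=\bbE[Z^B_i\mathbf 1_{\{Z^B_i=0\}}]=0$ forces $f_B(X_n)=0$ for \emph{all} $n$ almost surely on that event (by stationarity), so the walk would never touch $P$; one then tries to contradict this using connectivity and stationarity. Neither your sketch nor the paper's proof carries this through explicitly.
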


\begin{proof}
By the nonatomicity of the exit measure (\Cref{thm:RW_circle_packing}), we can choose $\eta>0$ small enough so that the first item holds with probability at least $1-\ve/3$. Since the probability that $\rho$ is adjacent to a blue face or a red face are both positive, by the Ergodic theorem, the density of times when the random walker is adjacent to a red face is strictly positive and the same is true for a blue face. Consequently, we can choose a further smaller $\eta$ if needed, and a large enough $N$ to ensure that the second event holds with probability at least $1-\ve/3$.
\end{proof}

Using \Cref{lem:G}, choose $\eta, N$ so that $\P(\cG(\eta, N) \cap R(\eta)) \ge 1/2$. Now given such an $\eta$, using \Cref{lem:correlation}, choose $\delta>0$ small so that for all $f_1,f_2,f_3,f_4 \not \in (1-\delta)\D$ and $|f_i - f_j| > \eta$ for all $i \neq j$,
\begin{equation}
|\E((\Delta h(f_1) - \Delta h(f_2))(\Delta h (f_3) - \Delta h(f_4)) | < \eta^{10}. \label{eq:correlation}
\end{equation}
Let $J^i_N$ be a uniformly picked index from $\{1,\ldots, N\}$. Using the convergence of random walk to $\partial \D$, we can increase $N$ if needed to ensure that 
\begin{equation}
\P(X_{J^i_N} \in (1-\delta )\D \text{ for some }i\in \{1,2,3,4\}) < \frac{\eta^{10}}{4M^2(2)}.\label{eq:uniform_point}
\end{equation}
where $M(2)$ is as in \Cref{cor:height_moment}

With the quantifiers set up as above, we are ready to arrive at the desired contradiction. Fix $\eps=1/2$, and then fix $\eta, N$ as above and then run 4 independent random walks until step $N$, pick independently $J^i_N$ as above, and then for each $i$ pick a face $F_i$ uniformly at random from those adjacent to $X_{J^i_N}$. Using \eqref{eq:correlation}, \eqref{eq:uniform_point}, we conclude that 
\begin{equation}
\E((\Delta h(F_1) - \Delta h(F_2))(\Delta h (F_3) - \Delta h(F_4))  < 2\eta^{10}.\label{eq:correlation2}
\end{equation}

Now using \Cref{lem:loop_bernoulli} we will prove a contradictory lower bound of the left hand side of \eqref{eq:correlation2}.
Condition on $(\Gamma, \rho)$ and $M^S_X \Delta M^T_X$ and the random walk trajectories $X^i$, $i \in \{1,2,3,4\}$ and assume $\cF$ is the sigma algebra generated by them. Using \Cref{lem:loop_bernoulli}, we conclude that
\begin{align*}
\E\left(\Delta h(F_1) - \Delta h (F_2))(\Delta h(F_3) - \Delta h (F_4)\right) & = \E\left(\left(\sum_{L \in \cL(F_1,F_2)}\xi_{L}\right)\left(\sum_{L \in \cL(F_3,F_4)}\xi_{L}\right)\right) \\
& = \E(|\cL(F_1,F_2) \cap \cL(F_3,F_4)|) \\
&\ge \E(\E(|\cL(F_1,F_2) \cap \cL(F_3,F_4)|| \cF)1_{\cG(\eta, N) \cap R(\eta)})\\
&\ge  \left(\frac{\eta}{\Delta}\right)^4 \P(\cG(\eta, N) \cap R(\eta))\\
& = \frac12\left(\frac{\eta}{\Delta}\right)^4.
\end{align*}
In the fourth line above, we use the fact that on $\cG(\eta, N)$ with probability at least $(\eta/\Delta)^4$, $F_1,F_3$ are red and $F_2,F_4$ are blue since the density of indices which are red or blue are at least $\eta$ by definition. In which case, $|\cL(F_1,F_2) \cap \cL(F_3,F_4)| \ge 1$ since it must contain $P$.
This contradicts \Cref{eq:correlation} for a small enough choice of $\eta$.
\end{proof}
We now finish with the proof of \Cref{thm:main}.
\begin{proof}[Proof of \Cref{thm:main}]
Let $\chi \subset \partial \D$ be the set of $x \in \partial \D$ for which \Cref{thm:main_random} is true for $\mu^x$. It follows from \Cref{thm:main_random} that $\chi$ must have exit measure 1 for a simple random walk started at $\rho$, and it follows from the full support of the exit measure (\Cref{thm:RW_circle_packing}) that $\chi$ must be dense.
\end{proof}
\section{Extensions and open questions.}\label{sec:open}
We believe that the bounded degree assumption or nonamenability of the triangulation are not necessary, and the result should hold as long as the triangulation is CP hyperbolic.

\begin{conjecture}
\Cref{thm:main0,thm:main} hold for CP hyperbolic triangulations.
\end{conjecture}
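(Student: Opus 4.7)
The plan is to separate the conjecture into the existence portion (Theorem \ref{thm:main0}) and the no-bi-infinite-path portion (Theorem \ref{thm:main}), since the first is considerably more tractable. For Theorem \ref{thm:main0} under CP hyperbolicity alone, the key observation is that every ingredient used in its proof is an input from general circle packing theory that is already known to hold beyond the bounded degree case: the existence of a circle packing in $\D$ for CP hyperbolic triangulations is due to He--Schramm; the convergence of simple random walk to $\partial \D$ together with full support and nonatomicity of the exit measure has been extended to the unbounded degree CP hyperbolic case in subsequent work building on \cite{AHNRCP15}. Given these inputs, the proofs of Lemmas \ref{lem:beurling}--\ref{lem:beurling_rw} and Proposition \ref{prop:convergence_FUSF_ray} go through almost verbatim, since they only use nonatomicity of the exit measure plus a Borel--Cantelli argument. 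So first I would carefully audit each step in Section \ref{sec:infinite_limit} and replace every appeal to bounded degree by the corresponding unbounded degree analogue.

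The more substantial extension is Theorem \ref{thm:main}, and here I would try to bypass the quantitative tail bounds of Theorem \ref{thm:height}. The proof of \Cref{thm:main_random} does not actually need the double exponential tail; it only needs (a) localization of the height function, (b) finiteness of second moments of $h^x(f)$ uniform on compact subsets of $\D$, and (c) decorrelation of height differences at faces that are far apart in the embedding (Lemma \ref{lem:correlation}). The plan is therefore to reprove these three ingredients directly from circle packing inputs rather than from nonamenability. Localization already holds in the bounded degree transient case by Theorem \ref{thm:ht_func_conv}, and its proof uses only convergence of branches of $\sf T$, which needs only CP hyperbolicity. For the second moment, the natural approach is to split the winding of a branch of the WUSF into the contribution from a ``macroscopic'' part close to $\rho$ and a ``microscopic'' tail: Lemma \ref{lem:circle_small} requires only a uniform ratio estimate on radii $r(X_n)$, which in the unbounded degree case is replaced by the stationary Markov chain analysis on the circle packing radii.

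The hardest step, and the one I expect to be the main obstacle, is the analogue of Lemma \ref{lem:winding_lemma}: showing that the loop erasure of a random walk path of length $\ell$ has winding at most $O(\log \ell)$. This bound used nonamenability essentially, via the volume growth lower bound $|B_\Gamma(v, q/2)| \ge (1+\alpha)^q$ to argue that nested loops consume length at an exponential rate. Without nonamenability one must replace this combinatorial argument by a geometric one using the circle packing: since interstices in the packing have bounded aspect ratio for bounded degree triangulations, one can convert winding bounds into area bounds in $\D$. For unbounded degrees this step breaks, and one would need a substitute such as working with the square tiling or using the stationarity of the packing radii process along a simple random walk. I would attempt this via an annulus-crossing argument in $\D$, arguing that a loop enclosing $\rho$ corresponds to the walk crossing a conformal annulus whose modulus grows linearly in the number of loops, and that each such crossing has probability bounded away from one.

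Finally, assuming these ingredients are in place, the decorrelation step (Lemma \ref{lem:correlation}) extends using only Beurling's estimate in the general CP hyperbolic case, and the contradiction argument in Case 2 of Theorem \ref{thm:main_random} is unchanged. Case 1, which used Corollary \ref{cor:ind_height_perc} (the height percolation result), is more delicate: we would instead argue that if $\cD_\infty$ holds then the height difference $\Delta h$ is constant on infinite components of faces adjacent to bi-infinite paths, which combined with the (now proved) localization would force $\Delta h$ to take arbitrarily large values in a single infinite component, contradicting localization directly without needing the quantitative tail bound. Thus the logical structure of the proof is preserved; the real work is reproving the winding tail bound without nonamenability, and I would consider this the crux of the conjecture.
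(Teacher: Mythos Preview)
The statement you are addressing is a \emph{conjecture}, listed in Section~\ref{sec:open} under ``Extensions and open questions.'' The paper offers no proof; it is explicitly left open. There is therefore no paper argument to compare against, and your submission is, as you yourself frame it, a plan rather than a proof.

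Your identification of Lemma~\ref{lem:winding_lemma} as the essential obstacle is correct: it is the one place where nonamenability enters irreducibly, via the exponential volume growth $|B_\Gamma(v,q/2)| \ge (1+\alpha)^q$. You propose an annulus-crossing substitute but do not carry it out, so the core of the conjecture remains unaddressed.

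One part of your plan, however, contains a genuine error. For Case~1 of Theorem~\ref{thm:main_random} you propose to bypass Corollary~\ref{cor:ind_height_perc} by arguing that on $\cD_\infty$ the height difference $\Delta h$ would take ``arbitrarily large values in a single infinite component, contradicting localization directly.'' This does not work. Localization (Theorem~\ref{thm:ht_func_conv}) asserts only that $|h^x(f)| < \infty$ almost surely for each \emph{fixed} face $f$; it says nothing about the nonexistence of infinite connected sets of faces on which $|h^x|$ exceeds a given threshold. The paper's Case~1 argument needs exactly the statement that for large $k$ there is no infinite component of $\cH_k^S \cup \cH_k^T$, and that statement (Corollary~\ref{cor:ind_height_perc}) routes back through the tail bound of Lemma~\ref{lem:height_tail}, hence through Lemma~\ref{lem:winding_lemma}. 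So Case~1 is not a soft consequence of localization; it depends on the same quantitative input you have flagged as the crux.

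A smaller inaccuracy: you write that Lemma~\ref{lem:correlation} ``extends using only Beurling's estimate,'' but its proof also invokes Corollary~\ref{cor:height_moment} (uniform moment bounds) via Cauchy--Schwarz, and those moments come from the tail estimate. You list uniform second moments as ingredient~(b) earlier, so this is an internal inconsistency in the plan rather than an oversight, but it means the decorrelation step is not decoupled from the winding bound either.
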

Our results should extend beyond triangulations to graphs whose duals have finite degree. There are embeddings other than circle packing which should prove useful in that regard, examples of such embeddings include the square tiling, see \cite{sq_tiling_BS,sq_tiling_tom}.

\bibliographystyle{abbrv}
\bibliography{DD_hyperbolic}
\end{document}